\newtheorem{thm}{Theorem}[section]
\newtheorem{lem}[thm]{Lemma}
\newtheorem{prop}[thm]{Proposition}
\theoremstyle{remark}
\newtheorem{rem}{Remark}
\newtheorem{exa}{Example}
\newtheorem{count-exa}{Counter-example}
\DeclareSymbolFont{fouriersymbols}{FMS}{futm}{m}{n}
\DeclareSymbolFont{fourierlargesymbols}{FMX}{futm}{m}{n}
\DeclareMathDelimiter{\VERT}{\mathord}{fouriersymbols}{152}{fourierlargesymbols}{147}
\def\E{\mathbb{E}}
\def\P{\mathbb{P}}
\def\R{\mathbb{R}}
\def\Q{\mathbb{Q}}
\def\1{\mathbbm{1}}
\def\N{\mathbb{N}}
\def\Z{\mathbb{Z}}
\newcommand{\cW}{\mathcal{W}}
\begin{document}

\title{Uniform Wasserstein convergence of penalized Markov processes}
\author{Nicolas Champagnat$^{1}$, \'Edouard Strickler$^{1}$, Denis Villemonais$^{1}$}

\footnotetext[1]{Universit\'e de Lorraine, CNRS, Inria, IECL, F-54000 Nancy, France \\
  E-mail: Nicolas.Champagnat@inria.fr, edouard.strickler@univ-lorraine.fr, Denis.Villemonais@univ-lorraine.fr }

\maketitle

\begin{abstract}
  {For general penalized Markov processes with soft killing, we propose a simple criterion ensuring uniform convergence
    of conditional distributions in Wasserstein distance to a unique quasi-stationary distribution. We give several examples of
    application where our criterion can be checked, including Bernoulli convolutions and piecewise deterministic Markov processes of
    the form of switched dynamical systems, for which convergence in total variation is not possible.}
\end{abstract}

\noindent\textit{Keywords:} quasi-stationary distribution; penalized Markov process; Feynman-Kac semigroup; Wasserstein distance;
exponential ergodicity; $Q$-process; quasi-ergodic distribution; Bernoulli convolution; piecewise deterministic Markov
process.

%\medskip\noindent\textit{2010 Mathematics Subject Classification.} %Primary: 
%...

%\tableofcontents

\section{Introduction}
\label{sec:introduction}

Consider the Bernoulli convolution
\begin{equation}
  \label{eq:Bernoulli}
  X_{n+1} = \frac{1}{2} X_n + \theta_{n+1}
\end{equation}
with $X_0=x\in[-2,2]$ and $(\theta_n)_{n\geq 1}$ i.i.d.\ with $\mathbb{P}( \theta_1 = 1) = \mathbb{P}( \theta_1 = - 1) = 1/2$. This is a
Markov chain on $E = [ - 2, 2]$. It is well-known that this Markov chain admits as unique invariant distribution $\mu$ the uniform
probability measure on $[-2,2]$. For all $x,y\in[-2,2]$, the trivial coupling between the chains started from $x$ and $y$ defined as
$X_0=x$, $Y_0=y$,
\[
  X_{n+1} = \frac{1}{2} X_n + \theta_{n+1}\quad\text{and}\quad Y_{n+1} = \frac{1}{2} Y_n + \theta_{n+1}
\]
satisfies
\[
  |X_n-Y_n|\leq 2^{-n}|x-y|.
\]
It is then trivial to prove the geometric convergence in $L^1$ Wasserstein distance of the law of $X_n$ to $\mu$.
  
If we now assume that the Markov chain $(X_n)_{n\geq 0}$ is killed at time $n$ with probability $1-p(X_{n-1})$, for some measurable
function $p:[-2,2]\to[0,1]$, proving convergence in Wasserstein distance of the law of $X_n$ conditioned to survive is far less immediate and,
to the best of our knowledge, no general criterion allowing to prove such results is available in the literature. This is the goal of
this article. Namely, we introduce general criteria implying the convergence in Wasserstein distance of conditional
distributions of absorbed Markov processes given non-absorption, to a so-called quasi-stationary distribution.

General criteria are known for convergence of distributions of absorbed Markov processes conditioned to non-absorption in total
variation norm, based on a variety of methods, cf.\
e.g.~\cite{Birkhoff1957,DelMoral2004,ChampagnatVillemonais2016b,BansayeEtAl2020,GuillinEtAl2020,FerreRoussetEtAl2021,BenaimChampagnatEtAl2022,ChampagnatVillemonais2023}.
However, in the case of Bernoulli convolution, we cannot expect to have convergence in total variation distance because, if $X_0$ is
rational (resp.\ irrational), $X_n$ is also rational (resp.\ irrational) for all $n\geq 0$, so that the laws of $X_n$ conditioned to
survival started from a rational and an irrational initial value are singular. Because of this singularity, one can not expect to obtain convergence for the total variation distance. In addition, the methods used in the above references, which rely either on regularising properties of the semi-group or on the existence of small sets (which allow to build couplings in term of total variation distance), are not applicable in our context.

Far fewer references deal with convergence in Wasserstein distance of conditional distributions.
In~\cite{CloezThai2016,Villemonais2020,JournelMonmarche2022}, the authors prove the uniform convergence in Wasserstein distance of
particle systems approximating conditional distributions and deduce similar properties for the killed process. However, all these
references assume strong assumptions on the killing probability (or rate in continuous time) which amounts to assume that coupling of
unkilled processes is faster that killing (see Section~\ref{sec:coupling-faster-killing} below).  In~\cite{Ocafrain2021}, the author proves convergence in Wasserstein distance
of conditional distributions of diffusions absorbed at the boundary of a domain under assumptions ensuring that the diffusion
satisfies a Poincar\'e inequality. In~\cite{delMoral2021}, the authors study quantum (Shr\"odinger) harmonic operators with Gaussian
initial distributions and with quadratic potential energy, for which they prove convergence in Wasserstein distance of the
Feynman-Kac semigroup using that Gaussian distributions are preserved by the system. Let us also cite~\cite{Ocafrain2020a}, where
convergence in Wasserstein distance of conditional distributions of normalized drifted Brownian motions conditioned to non-absorption
at 0 is proved, based on explicit formulas for the distribution of the process.
Note that most of these references consider $L^1$ Wasserstein distances, except~\cite{delMoral2021} which considers $L^2$ Wasserstein
distances. None are applied to processes for
which convergence in total variation is not expected. 
One of the main difficulties is that estimates in Wasserstein distances based on coupling do not extend easily to
conditional distributions. This is related to the fact that extending Wasserstein distances to measures with different masses is a
hard problem~\cite{FigalliGigli2010,Peyre2013,PiccoliRossi2014,PiccoliRossi2016,KondratyevEtAl2016}.

Assuming that the killing probability (or rate in continuous time) is Lipschitz with respect to a bounded distance, the criterion we
propose implies convergence in $L^1$ Wasserstein distance of conditional distributions to a quasi-stationary distribution.
Convergence is exponential and uniform with respect to the initial condition, with explicit (although non-optimal) convergence rate.
We also obtain asymptotic estimates on the survival probability, ergodicity in Wasserstein distance of the $Q$-process (the process
conditioned to never be absorbed), and quasi-ergodic estimates in Wasserstein distance. After showing that our criterion improves
known results which assume that coupling is faster than killing, we consider processes extending the Bernoulli
convolution~\eqref{eq:Bernoulli} both in continuous and discrete time, under the only assumption that the killing probability is
Lipschitz and uniformly bounded away from 0 (with counter-examples when it is not-Lipschitz or not bounded away from 0). We conclude
with an application to Piecewise Deterministic Markov Processes (PDMP) taking the form of switched ordinary differential equations.

In Section~\ref{sec:results}, we state our main criterion and give the main general results on convergence to a quasi-stationary
distribution, asymptotic behavior of the survival probability, ergodicity of the $Q$-process and quasi-ergodic behavior. The rest of
Section~\ref{sec:criterion} is dedicated to the proof of these results. We then propose in Section~\ref{sec:coupling-faster-killing}
general criteria relating the coupling rate of the non-penalized process and the killing rate, covering cases where coupling is
faster than killing. Applications to processes with almost-sure contraction of unkilled trajectories are given in
Section~\ref{sec:almost-sure-contraction}. The case of iterated random contracting functions extending the Benoulli convolution is
then studied in Section~\ref{sec:random-composition}, without assuming uniform contraction. We conclude with applications to switched
dynamical systems in Section~\ref{sec:switched}.

\section{A general criterion for uniform exponential convergence of conditional distributions in Wasserstein distance}
\label{sec:criterion}

\subsection{Results}
\label{sec:results}

Let $(E,d)$ be a Polish space. Recall the definition of the $L^1$-Wasserstein distance $\mathcal{W}_{d}$ on the set of probability
measures on $E$: for all probability measures $\mu$ and $\nu$ on $E$ as 
%\textcolor{blue}{[si $d$ n'est pas born\'e, c'est fini
%  seulement pour les mesures qui int\`egrent la distance. Est-ce qu'il faut le dire ? est-ce qu'il faut supposer $d$ born\'e d\`es le
%  d\'ebut ? est-ce qu'on passe {\c c}a sous silence ?}\textcolor{red}{je vote pour passer sous silence}
\[
  \mathcal{W}_{d}(\mu,\nu)=\inf_{\pi\in\Pi(\mu,\nu)}\iint_{E^2}d(x,y)\,\pi(dx,dy),
\]
where $\Pi(\mu,\nu)$ is the set of probability measures on $E^2$ with first marginal $\mu$ and second marginal $\nu$. The infimum in
the definition above is actually attained on $\Pi(\mu,\nu)$~\cite[Thm.\,4.1]{Villani2009}. All such minimizing coupling is called an
\emph{optimal coupling}. Recall also the Kantorovich-Rubinstein duality relation~\cite[Thm.\,5.10]{Villani2009}: for all probability
measures $\mu$ and $\nu$ on $E$,
\[
  \mathcal{W}_{d}(\mu,\nu)=\sup_{\phi\in\text{Lip}_1(d)}\left|\int_E \phi(x)\mu(dx)-\int_E\phi(y)\nu(dy)\right|,
\]
where $\text{Lip}_1(d)$ is the set of $d$-Lipschitz functions $\phi$ on $E$ with Lipschitz norm $\|\phi\|_{\text{Lip}(d)}\leq
1$, where
\[
  \|\phi\|_{\text{Lip}(d)}=\sup_{x\neq y\in E}\frac{|\phi(x)-\phi(y)|}{d(x,y)}.
\]
We also denote by $\text{Lip}(d)$ the set of $d$-Lipschitz functions from $E$ to $\mathbb{R}$. Note that, if $d$ is bounded,
$\text{Lip}(d)\subset L^\infty(E)$.

Let $\left(\Omega,(\mathcal{F}_{t})_{t\in I},(\P_x)_{x\in E},(X_t)_{t\in I}\right)$ be a Markov process evolving in $E$, where the
time space is either $I=[0,+\infty)$ or $I=\Z_+$. Since our criteria make use of coupling between unkilled processes, it is more
convenient to work in the formalism of penalized Markov semigroups than killed Markov processes. Let $Z=\{Z_{t};t\in I\}$ be a
collection of multiplicative nonnegative random variables defined either as
\begin{equation}
  \label{eq:penaliz-discret}
  Z_t=Z_t(X)=p(X_0)p(X_1)\ldots p(X_{t-1})\ \text{for\ } I=\Z_+,
\end{equation}
or
\begin{equation}
  \label{eq:penaliz-contin}
  Z_{t}=Z_t(X)=e^{-\int_0^t \rho(X_s) ds}\ \text{for\ } I=\R_+,
\end{equation}
where $p$ (resp.\ $\rho$) is a measurable function from $E$ to $[0,1]$ (resp.\ $\R_+$). For notational convenience, in the
discrete-time case, we write $p(x)=\exp(-\rho(x))$. Throughout all this work, we will make the following assumption on the distance
$d$ and the function
$\rho$, both in the discrete-time and continuous-time cases, which implies in particular that
$\text{osc}(\rho):=\sup_{x\in E}\rho(x)-\inf_{x\in E}\rho(x)$ is finite.

\paragraph{Standing assumption}
The distance $d$ is bounded on $E\times E$ by a constant $\bar{d}$ and the function $\rho:E\to\mathbb{R}_+$ is $d$-Lipschitz.

\bigskip

% such that, for any $t\in I$, $Z_{t}$ is a $\mathcal{F}_{t}$-measurable random variable taking values in $[0,1]$ and
% \begin{align}
%   \label{eq:hyp-first}
%   \E_{x}(Z_{t})>0\quad\forall t\in I\ \forall x\in E.
% \end{align}
% By multiplicative, we mean that, for all $s\leq t\in I$, there exists a random variable $Y_{s,t}$ such that $Z_t=Z_s Y_{s,t}$ and,
% under $\P_x$, conditionally on $\mathcal{F}_s$, $Y_{s,t}$ is distributed as $Z_{t-s}$ under $\P_{X_s}$. \textcolor{violet}{CITER DELMO}

We define the penalized (non-conservative) semigroup, for all bounded measurable $f$ on $E$ and all $x\in E$, as
\[
% \delta_x P_{t} f=
P_t f(x)=\E_{x}[Z_{t} f(X_t)],\quad\forall t\in I.
\]
We extend this definition as usual to any probability measure $\mu$ on $E$ as
\[
\mu P_t f=\int_E P_t f(x)\,\mu(dx)
\]
In particular, $\delta_x P_{t} f= P_t f(x)$. The quantity $p(x)$ (resp.\ $\rho(x)$ in continuous time) can be interpreted as a
killing probability (resp.\ rate) in position $x$, so that the probability measure $\frac{\delta_x P_{t}}{\delta_x P_t\mathbbm{1}}$
is the conditional distribution of $X_t$ given $X_0=x$ and that the process $X$ is not killed before time $t$.

\begin{rem}
  Note that the assumption that $Z_t\leq 1$ could be relaxed following the ideas of~\cite{ChampagnatVillemonais2020}. In particular,
  if $Z_t$ is bounded by the constant $e^{\lambda t}$ for all $t\geq 0$, then defining
  $\delta_x P_t f=e^{-\lambda t}\E_{x}[Z_{t} f(X_t)]$ would allow to recover the present framework.
\end{rem}

\begin{rem}
The assumption that $(X_t,Z_t)_{t\geq 0}$ is a time-homogeneous penalized Markov process could be also relaxed easily and most of the
results below can be extended to the time-inhomogeneous case, in the same spirit
as~\cite{ChampagnatVillemonais2018b,BansayeEtAl2020,Ocafrain2020}.  
\end{rem}

Recall the definition of a quasi-stationary distribution for the penalized process $X$: a probability measure $\nu$ on $E$ is called a
\emph{quasi-stationary distribution} if
\[
  \frac{\nu P_t}{\nu P_t\mathbbm{1}}=\nu,\quad\forall t\geq 0.
\]
It is well-known (cf.\ e.g.~\cite{ColletMartinezEtAl2013,MeleardVillemonais2012,DoornPollett2013}) that to any quasi-stationary distribution $\nu$ is associated a
nonnegative number $\lambda_0$, called the \emph{absorption rate} of the quasi-stationary distribution $\nu$, such that
\[
  \nu P_t=e^{-\lambda_0 t}\nu,\quad \forall t\geq 0.
\]

For all $x,y\in E$, we call \emph{coupling measure between $\mathbb{P}_x$ and $\mathbb{P}_y$} a probability measure $\P_{(x,y)}$ on a
probability space where is defined $(X_t,Y_t)_{t\in I}$ such that $(X_t)_{t\in I}$ (resp.\ $(Y_t)_{t\in I}$) has the same
distribution as $(X_t)_{t\in I}$ under $\mathbb{P}_x$ (resp.\ under $\mathbb{P}_y$). We say that a coupling measure $\P_{(x,y)}$ is
\emph{Markovian} if the coupled process $(X_t, Y_t)_{t \in I}$ is Markov  with respect to its natural filtration.
We also define, for all $t\in I$, $Z_t^X=Z_t(X)$, $Z_t^Y=Z_t(Y)$,
\[
  G_{t}^X=\frac{Z^X_{t}}{\E_{x} Z_{t}}\quad\text{and}\quad
  G_{t}^Y=\frac{Z^Y_{t}}{\E_{y} Z_{t}}.
\]
In particular, $\E_{(x,y)}Z^X_t=\E_x Z_t$.

We introduce our main assumption
\begin{description}
\item[(A)]  (exponential penalized coupling) There exist $C_A, \gamma_A > 0$ such that, for all $x,y\in E$,
  there exists a Markovian coupling $\P_{(x,y)}$ between $\mathbb{P}_x$ and $\mathbb{P}_y$ such that for all $t\in I$,
  \begin{equation*}
    \E_{(x,y)}\left[G^X_{t}d(X_t,Y_t)\right]\leq %\gamma(t) d(x,y).
    C_A e^{-\gamma_A t} d(x,y).
  \end{equation*} 
\end{description}

This assumption can be interpreted as a coupling estimate between $X_t$ and $Y_t$  with penalization depending only on
  the paths of $X$. This
only assumption is sufficient to ensure convergence in Wasserstein distance to a unique quasi-stationary distribution.

% \textcolor{red}{Il se peut que dans certains exemples on ne d\'emontre ces conditions qu'\`a partir d'un certain instant. \`A mon
%   avis {\c c}a ne change pas grand chose \`a la preuve. \`A v\'erifier suivant ce dont on a besoin dans les exemples.}

\begin{thm}
  \label{thm:main-compact}
  Under Assumption~(A), there exist constants $C_1>0$ and $\alpha>0$ such that, for all probability measures $\mu$ and
  $\nu$ on $E$ and all $t\in I$,
  \begin{equation}
    \label{eq:main-compact}
    \mathcal{W}_{d}\left(\frac{\mu P_t}{\mu P_t\mathbbm{1}}, \frac{\nu P_t}{\nu P_t\mathbbm{1}}\right)\leq C_1 e^{-\alpha
      t}\mathcal{W}_{d}(\mu,\nu).    
  \end{equation}
  In addition, the penalized semi-group $(P_t)_{t\in I}$ admits a unique quasi-stationary distribution $\nu_\text{QS}$ such that
  \begin{equation}
    \label{eq:hyp-eta}
    \mathcal{W}_{d}\left(\frac{\mu P_t}{\mu P_t\mathbbm{1}},\nu_\text{QS}\right)\leq C_1 e^{-\alpha
      t}\mathcal{W}_{d}(\mu,\nu_\text{QS}).    
  \end{equation}
  % where $\bar{d}$ is an upper bound for the distance $d$ on $E\times E$.
\end{thm}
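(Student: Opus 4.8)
The plan is to prove the contraction estimate~\eqref{eq:main-compact} first, and then deduce existence and uniqueness of $\nu_\text{QS}$ together with~\eqref{eq:hyp-eta} by a standard Cauchy-sequence argument. The key observation is that for a probability measure $\mu$ and a test function $\phi\in\text{Lip}_1(d)$ one can write, using the coupling $\P_{(x,y)}$ from Assumption~(A) and Fubini,
\[
  \frac{\mu P_t\phi}{\mu P_t\mathbbm{1}}-\frac{\nu P_t\phi}{\nu P_t\mathbbm{1}}
  =\frac{1}{\mu P_t\mathbbm{1}\,\cdot\,\nu P_t\mathbbm{1}}
  \left(\nu P_t\mathbbm{1}\cdot\mu P_t\phi-\mu P_t\mathbbm{1}\cdot\nu P_t\phi\right),
\]
so that the whole difficulty is to control the numerator and to bound $\mu P_t\mathbbm{1}$ from below. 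For the numerator, I would fix an optimal coupling $\pi$ of $\mu$ and $\nu$ for $\mathcal{W}_d$, and then, for $\pi$-almost every pair $(x,y)$, run the Markovian coupling $\P_{(x,y)}$; writing the numerator as $\E_\pi\E_{(x,y)}[Z_t^X Z_t^Y(\phi(X_t)-\phi(Y_t))]$ after symmetrization, one reduces — since $\phi$ is $1$-Lipschitz and $Z_t^Y\le 1$ — to a bound of the form $\E_\pi\E_{(x,y)}[Z_t^X d(X_t,Y_t)]=(\E_x Z_t)\,\E_\pi\E_{(x,y)}[G_t^X d(X_t,Y_t)]$, which by Assumption~(A) is at most $C_A e^{-\gamma_A t}(\E_x Z_t)\,\mathcal{W}_d(\mu,\nu)$, and similarly with $x$ and $y$ exchanged.

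The main obstacle is the lower bound on $\mu P_t\mathbbm{1}=\E_\mu Z_t$, i.e.\ a lower bound on the survival probability, uniform in the initial distribution. The idea is to extract it from Assumption~(A) itself. Taking $\phi$ to be (a suitable constant plus) an arbitrary element of $\text{Lip}_1(d)$, the contraction estimate for the numerator shows that $x\mapsto \E_x Z_t / \E_x Z_t$-type ratios cannot oscillate too much; more precisely, applying the above identity with $\nu=\delta_y$ and $\phi\equiv\bar d$ bounded by $\bar d$, together with the fact that $d\le\bar d$, one gets $|\E_x Z_t-\E_y Z_t|\le C_A\bar d\,e^{-\gamma_A t}(\E_x Z_t+\E_y Z_t)$ roughly, hence $\E_x Z_t$ and $\E_y Z_t$ are comparable up to a factor $1+o(1)$ for $t$ large, uniformly in $x,y$. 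Combined with the trivial bound $\E_\mu Z_{t+s}\ge e^{-\operatorname{osc}(\rho)s}\,\inf_x\E_x Z_t\cdot(\text{something})$ coming from the Markov property and the Standing Assumption (which makes $\rho$, hence $Z$, bounded), one obtains $\inf_\mu \E_\mu Z_t\ge c\,\E_{\nu_0}Z_t$ for a fixed reference measure $\nu_0$, for $t$ large; and then an elementary argument shows $\E_\mu Z_t$ decays at most exponentially, giving the needed uniform lower bound $\E_\mu Z_t\ge c_0 e^{-\kappa t}$. Feeding this into the numerator estimate yields $\mathcal{W}_d(\mu P_t/\mu P_t\mathbbm{1},\nu P_t/\nu P_t\mathbbm{1})\le C e^{-(\gamma_A-\kappa)t}\mathcal{W}_d(\mu,\nu)$ once $t$ is large enough that $\gamma_A>\kappa$ — if the naive $\kappa$ is too big, one boosts the decay by iterating the semigroup over long time steps and using the semigroup/Markov structure, which is where most of the bookkeeping lies.

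Finally, for the second part: the map $\mu\mapsto \mu P_{t_0}/\mu P_{t_0}\mathbbm{1}$ is, by~\eqref{eq:main-compact} at a fixed large time $t_0$, a strict contraction of the complete metric space $(\mathcal P(E),\mathcal{W}_d)$ — complete because $(E,d)$ is Polish and $d$ is bounded. Banach's fixed point theorem gives a unique fixed point $\nu_\text{QS}$ of this map, and one checks by the semigroup property and a short argument (iterating~\eqref{eq:main-compact} and letting the time variable run through all of $I$, not just multiples of $t_0$) that $\nu_\text{QS}$ is in fact a quasi-stationary distribution for the full family $(P_t)_{t\in I}$, that it is the unique one, and that~\eqref{eq:hyp-eta} holds by applying~\eqref{eq:main-compact} with $\nu=\nu_\text{QS}$. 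I expect the survival-probability lower bound to be the genuinely delicate point; everything else is a careful but routine combination of Kantorovich-Rubinstein duality, the coupling in~(A), and the boundedness assumptions.
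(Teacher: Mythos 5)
Your plan contains the right ingredients for the easy parts — Kantorovich duality, the decomposition of a difference of ratios, and the Banach fixed-point argument at the end, which indeed matches the paper's Step~3 — but the central contraction estimate is sketched incorrectly, and the difficulty has been misplaced.

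First, the "symmetrized numerator" identity is false: for independent variables $\E[A]\E[B]\neq\E[AB]$, so $\nu P_t\mathbbm{1}\cdot\mu P_t\phi-\mu P_t\mathbbm{1}\cdot\nu P_t\phi$ cannot be written as a single expectation $\E_\pi\E_{(x,y)}[Z^X_t Z^Y_t(\phi(X_t)-\phi(Y_t))]$ under one run of the coupling (one would at best obtain cross-terms involving two independent copies, which Assumption~(A) does not control). Second, and more seriously, the estimate you claim for the comparison of survival probabilities is too strong: Assumption~(A) does \emph{not} imply $|\E_xZ_t-\E_yZ_t|\lesssim e^{-\gamma_At}(\E_xZ_t+\E_yZ_t)$. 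What one actually gets — this is Condition~(B) in the paper, proved from~(A) and~(H) — is $|\E_xZ_t-\E_yZ_t|\le C_B\,d(x,y)\,\E_xZ_t$, with \emph{no} time decay; the Lipschitz perturbation of the penalization produces a time-uniform but non-vanishing contribution. Once you write the honest bound, the difference of conditional semigroups is controlled by a term of order $e^{-\gamma_At}\,\mathcal W_d(\mu,\nu)$ \emph{plus} a term of order $C_B\bar d\,\mathcal W_d(\mu,\nu)$ which does not decay. If $C_B\bar d\ge1$ — which happens whenever the oscillation of $\rho$ is not small — this is not a strict contraction for any $t$, and "boosting by iteration" cannot fix it: iterating a non-contractive map stays non-contractive.

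This is exactly why the paper does not take your direct route. The genuine obstruction is not the lower bound on the survival probability (the Harnack estimate~(H) is proved from~(A) in Proposition~2.4, but it only gives comparability, not decay). The crucial idea you are missing is twofold: work with the truncated distance $d_\kappa(x,y)=(\kappa d(x,y))\wedge1$, for which the Kantorovich dual class has $\|\phi\|_\infty\le1/2$ while $\|\phi\|_{\mathrm{Lip}(d)}\le\kappa$, so that the non-decaying $C_B\|\phi\|_\infty$-term becomes $C_B/\kappa$ times $d_\kappa(x,y)$ and can be made small by taking $\kappa$ large (Lemma~2.9); and, for pairs at $d_\kappa$-distance one, invoke an additional Doeblin-type estimate — Condition~(C), $\E_{(x,y)}[G^X_t\wedge G^Y_t]\ge1/C_C$ — to get a fixed amount of mass coupled regardless of how far apart the initial points are (Lemma~2.10). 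One must also pass to the time-inhomogeneous normalized semigroup $R^T_{s,t}$ so that the normalization by $\E_xZ_{T-s}$ is tied to a single starting point and the iteration bookkeeping closes. None of these is routine, and none appears in your plan.
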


\begin{rem}
  \label{rem:explicite-1}
  The constants $C_1$ and $\alpha$ in~\eqref{eq:main-compact} and~\eqref{eq:hyp-eta} can be expressed explicitely in terms of
  $\bar{d}$, $\|\rho\|_{\text{Lip}(d)}$, $C_A$ and $\gamma_A$, as can be checked from the proof. However, these constants are not
  optimal and quite complicated to obtain. We will see in Theorem~\ref{thm:main-under-ABC} alternative assumptions allowing to obtain
  better estimates on $C_1$ and $\alpha$.
\end{rem}

\begin{rem}
  \label{rem:non-borne} The fact that the convergence in~\eqref{eq:main-compact} is uniform with respect to the initial distributions
  (i.e.\ that $C_1$ does not depend on $\mu$ and $\nu$) is strongly related to the fact that the distance $d$ is assumed to be
  bounded. In the case of an unbounded distance, convergence of non-penalized processes is known to be related to Foster-Lyapunov
  criteria (see e.g.~\cite{MeynTweedie2009} for the total variation distance or~\cite{HMS11} for the Wasserstein distance). However, the
  question of convergence in Wasserstein distance of penalized Markov processes is completely open in this setting and we leave this study to further
  work.
\end{rem}

The proof of Theorem~\ref{thm:main-compact} is divided into two parts. First, in Section~\ref{sec:proofunderABC}, we prove the result
of the theorem under a set of alternative assumptions. Then, in Section~\ref{sec:main-proof-2}, we show that our assumption (A) is
equivalent to these alternative assumptions. In particular, we shall prove that (A) implies the following key property, which may be
interpreted as a global Harnack inequality for survival probabilities:
 \begin{description}
 \item[(H)]  (global Harnack inequality) There exist $C_H>0$ such that, for all $x,y\in E$ and $t\in I$,
   \begin{equation*}
     % \label{eq:Harnack}
     \E_x(Z_t) \leq C_H \E_y(Z_t).
   \end{equation*}
\end{description}

\begin{prop}
  \label{prop:Aexp-implies-H}
  Under Assumption (A), Condition~(H) is satisfied.
  % there exists a constant $C_H$ such that, for all $x,y \in E$ and $t \in I$, 
\end{prop}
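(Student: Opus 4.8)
The plan is to recast (H) as the statement that
\[
  \beta(t):=\frac{\sup_E P_t\mathbbm 1}{\inf_E P_t\mathbbm 1}=\sup_{x,y\in E}\frac{\E_x(Z_t)}{\E_y(Z_t)}
\]
stays bounded uniformly in $t\in I$, for then (H) holds with $C_H=\sup_t\beta(t)$. Two observations are available without (A) and will be used throughout. First, the standing assumption gives $\bar\rho:=\sup_E\rho<\infty$, hence $e^{-\bar\rho t}\le Z_t\le 1$ and $\beta(t)\le e^{\bar\rho t}<\infty$; in particular (H) already holds over any fixed bounded time window, but with a constant that blows up as $t\to\infty$. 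Second, $\beta$ is sub-multiplicative, $\beta(s+t)\le\beta(s)\beta(t)$: write $P_{s+t}\mathbbm 1=P_s(P_t\mathbbm 1)$ and bound $P_t\mathbbm 1$ between $\inf_E P_t\mathbbm 1$ and $\sup_E P_t\mathbbm 1$ inside the nonnegative operator $P_s$. On their own these facts only give $\log\beta(t)=O(t)$; the task is to feed in the exponential decay of (A) to promote this to a uniform bound.

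The core step I would aim for is a recursive inequality, for a well-chosen fixed increment $s\in I$, controlling $\beta(t)$ in terms of $\beta(t-s)$ and a Harnack-type continuity constant of $P_{t-s}\mathbbm 1$, with a contracting factor $e^{-\gamma_A s}$ and a fixed additive term depending only on $\bar d$, $\|\rho\|_{\text{Lip}(d)}$, $C_A$, $\gamma_A$; iterating it, taking $s$ large enough that the contracting factor wins, and absorbing the leftover bounded time via the two observations above then yields $\sup_t\beta(t)<\infty$. To produce it, fix $x,y\in E$ and $t>s$, use the Markov property to write $\E_x(Z_t)=\E_x[Z_s\,g(X_s)]$ and $\E_y(Z_t)=\E_y[Z_s\,g(Y_s)]$ with $g:=P_{t-s}\mathbbm 1$, and realize both expectations on a Markovian coupling $\P_{(x,y)}$ given by Assumption~(A) at time $s$, so that $\E_{(x,y)}[G^X_s\,d(X_s,Y_s)]\le C_A e^{-\gamma_A s}d(x,y)$. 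Then
\[
  \E_x(Z_t)-\E_y(Z_t)=\E_{(x,y)}\big[Z_s^X\,(g(X_s)-g(Y_s))\big]+\E_{(x,y)}\big[(Z_s^X-Z_s^Y)\,g(Y_s)\big].
\]
The first summand is the benign one: after dividing by $\E_y(Z_t)\ge(\inf_E g)\,\E_y(Z_s)$ and using that the coupling pulls $X_s$ towards $Y_s$ while $g$ stays comparable to $\inf_E g$ — this is where one needs, and must propagate jointly with $\beta$, a quantitative continuity estimate on $g=P_{t-s}\mathbbm 1$ — it is dominated by a fixed multiple of $C_A e^{-\gamma_A s}\bar d$ times that continuity constant, the contracting contribution we are after.

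The step I expect to be the genuine obstacle is the second summand $\E_{(x,y)}[(Z_s^X-Z_s^Y)\,g(Y_s)]$, which measures how far the two killing weights have drifted apart along the coupled trajectories. Since $|\log(Z_s^X/Z_s^Y)|\le\|\rho\|_{\text{Lip}(d)}\int_0^s d(X_u,Y_u)\,\dd u$, bounding it calls for a \emph{time-integrated} coupling estimate, whereas (A) only controls the single endpoint time $s$ — and a priori through a coupling that may itself depend on $s$, so one cannot simply invoke (A) at intermediate times. The natural remedy is to first upgrade (A) into an estimate valid at all times along one fixed Markovian coupling, and then to control $\int_0^s\E_{(x,y)}[G^X_s\,d(X_u,Y_u)]\,\dd u$ via the martingale identity $\E_{(x,y)}[G^X_s\mid\mathcal F_u]=Z^X_u\,(P_{s-u}\mathbbm 1)(X_u)/\E_x(Z_s)$, which reintroduces $\beta$ and the continuity constant and thereby closes the recursion — this is presumably exactly the role played by the reformulations of (A) announced after Theorem~\ref{thm:main-compact} and established in Section~\ref{sec:main-proof-2}. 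Keeping $s$ bounded instead confines $Z_s^X/Z_s^Y$ to $[e^{-\bar\rho s},e^{\bar\rho s}]$, which tames the second summand at the cost of no decay in $s$; negotiating this trade-off is what forces the intricate, non-optimal constants flagged in Remark~\ref{rem:explicite-1}.
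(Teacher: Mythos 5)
Your plan correctly identifies the target quantity (essentially $H_t$ in the paper's notation, which you call $\beta$), the two free observations (crude exponential bound and submultiplicativity), and the location of the genuine difficulty: you need a \emph{time-integrated} coupling estimate, not just one at the endpoint. However, you do not actually close the argument, and the route you sketch diverges from the paper's in a way that introduces extra burdens you never discharge.

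The paper avoids your intermediate-time split altogether. It applies Lemma~\ref{lem:simple} — a pointwise comparison of two exponentials, $|e^{\int f}-e^{\int g}|\le\int e^{\int_0^s f}|f_s-g_s|e^{\int_s^t g}\,\mathrm ds$ — directly to $Z_t^X$ and $Z_t^Y$, which combined with the Markov property yields in one step
\[
  \E_x Z_t \le \E_y Z_t + \|\rho\|_{\text{Lip}(d)}\int_0^t\E_{(x,y)}\!\left[Z_s^X\,d(X_s,Y_s)\,\E_{Y_s}Z_{t-s}\right]\mathrm ds.
\]
Splitting the time integral at a fixed $t_0$ and using (A) on $[t_0,t]$ gives the pivot inequality
\[
  H_t\le 1+\bar d\,\|\rho\|_{\text{Lip}(d)}e^{\text{osc}(\rho)t_0}+\|\rho\|_{\text{Lip}(d)}C_A\bar d\int_{t_0}^t e^{-\gamma_A s}H_sH_{t-s}\,\mathrm ds.
\]
This involves only $H$, with no auxiliary continuity constant for $P_{t-s}\mathbbm 1$. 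The final step is then an iterated improvement of exponential rates: assuming $H_t\le C e^{a t}$ and optimizing $t_0$ as a fraction of $t$ yields $H_t\le C'e^{a't}$ with $a'=a\,\text{osc}(\rho)/(\text{osc}(\rho)+\gamma_A)<a$, and iterating drives the exponent below $\gamma_A/2$, after which the integral in the pivot is uniformly small and $\sup_t H_t<\infty$ follows. This is \emph{not} a single-step contraction by ``taking $s$ large enough,'' as you suggest; the product $H_sH_{t-s}$ in the integrand makes a naive contraction argument fail, and the bootstrap is where the real work is.

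Your decomposition at a fixed intermediate time $s$ into $\E_{(x,y)}[Z_s^X(g(X_s)-g(Y_s))]$ and $\E_{(x,y)}[(Z_s^X-Z_s^Y)g(Y_s)]$, with $g=P_{t-s}\mathbbm 1$, leaves you with two unresolved obligations: (i) a quantitative Lipschitz estimate on $g$, which you acknowledge ``must be propagated jointly with $\beta$'' but never set up as a closed recursion — and note that such an estimate is essentially condition (B), which in the paper is \emph{derived from} Proposition~\ref{prop:Aexp-implies-H}, so invoking it here is circular; and (ii) the bound on $\E_{(x,y)}|Z_s^X-Z_s^Y|$, for which you correctly note an integrated coupling estimate is needed and gesture towards a martingale identity, but never produce the bound. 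Your reference to ``the reformulations of (A) announced after Theorem~\ref{thm:main-compact} and established in Section~\ref{sec:main-proof-2}'' is also circular: that section (specifically the proof that (A) implies (B)) uses Proposition~\ref{prop:Aexp-implies-H} as an input. In short, the proposal correctly scopes the problem but is not a proof: the recursion is never written down, the auxiliary Lipschitz constant is never controlled without circularity, and the iterated-rate bootstrap that actually finishes the paper's argument is replaced by an optimistic single-step contraction that does not obviously close.
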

{This proposition is proved in Section~\ref{sec:proofofABC}.} Before proceeding with the proof {of Theorem~\ref{thm:main-compact}}, we give other consequences of Assumption (A) on the
quasi-stationary behavior of the process. First, the next result, proved in Section~\ref{sec:proof-eta}, deals with the asymptotic
behaviour of the survival probability.

\begin{thm}
  \label{thm:eta-compact}
  Assume (A).
  % and that there exists a quasi-stationary distribution $\nu_{\textnormal{QSD}}$, $C_1$ and $\alpha>0$ such
  %   that, for all probability measure $\mu$ on $E$,
  % \begin{equation}
  %   \label{eq:hyp-eta}
  %   \mathcal{W}_{d}\left(\frac{\mu P_t}{\mu P_t\mathbbm{1}},\nu_{\textnormal{QSD}}\right)\leq C_1 e^{-\alpha
  %     t}\mathcal{W}_{d}(\mu,\nu_{\textnormal{QSD}}). 
  % \end{equation}
  Then, there exists a $d$-Lipschitz function $\eta:E\rightarrow(0,+\infty)$ such that
  \[
  \eta(x)=\lim_{t\rightarrow+\infty} e^{\lambda_0 t}\E_x Z_t,
  \]
  where $\lambda_0$ is the absorption rate of the quasi-stationary distribution $\nu_{QS}$ and the convergence holds
  exponentially fast for the uniform norm. The function $\eta$ also satisfies that $\nu_{QS}(\eta)=1$,
  $P_t\eta(x)=e^{-\lambda_0 t}\eta(x)$ for all $x\in E$ and $t\in I$ and is uniformly bounded away from 0 on $E$.
\end{thm}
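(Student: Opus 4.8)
The plan is to construct $\eta$ as the limit of $e^{\lambda_0 t}\E_x Z_t$ by exploiting the already-established convergence in Wasserstein distance from Theorem~\ref{thm:main-compact} together with the Harnack-type inequality (H) from Proposition~\ref{prop:Aexp-implies-H}. First I would record the basic semigroup relation: writing $m_t(x)=\E_x Z_t = \delta_x P_t\1$ and using the Markov property, $m_{t+s}(x) = \delta_x P_t(P_s\1) = m_t(x)\,\big(\tfrac{\delta_x P_t}{m_t(x)}\big)P_s\1$, so that
\[
  \frac{m_{t+s}(x)}{m_t(x)} = \frac{\delta_x P_t}{\delta_x P_t\1}\,P_s\1.
\]
The idea is that the right-hand side depends on $x$ only through the conditional distribution $\frac{\delta_x P_t}{\delta_x P_t\1}$, which by~\eqref{eq:main-compact} converges to $\nu_{QS}$ exponentially fast, uniformly in $x$. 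Since the Standing Assumption forces $P_s\1 \in \mathrm{Lip}(d) \subset L^\infty$, the map $\mu\mapsto \mu P_s\1$ is Lipschitz for $\cW_d$, hence $m_{t+s}(x)/m_t(x) \to \nu_{QS}P_s\1 = e^{-\lambda_0 s}$ as $t\to\infty$, uniformly in $x$, with exponential rate. This already identifies the candidate exponential decay rate and suggests that $e^{\lambda_0 t} m_t(x)$ should converge.

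The main step is then to turn this into a genuine Cauchy estimate. I would write, for $t<t'$,
\[
  \frac{e^{\lambda_0 t'} m_{t'}(x)}{e^{\lambda_0 t} m_t(x)} = e^{\lambda_0(t'-t)}\,\frac{m_{t'}(x)}{m_t(x)} = e^{\lambda_0(t'-t)}\prod\Big(\text{ratios over steps, or in continuous time}\Big),
\]
and bound $\big|\tfrac{m_{t+s}(x)}{m_t(x)} - e^{-\lambda_0 s}\big| = \big|\big(\tfrac{\delta_x P_t}{\delta_x P_t\1} - \nu_{QS}\big)P_s\1\big| \le \|P_s\1\|_{\mathrm{Lip}(d)}\,\cW_d\big(\tfrac{\delta_x P_t}{\delta_x P_t\1},\nu_{QS}\big) \le \|P_s\1\|_{\mathrm{Lip}(d)}\,C_1 e^{-\alpha t}\bar d$. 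To control $\|P_s\1\|_{\mathrm{Lip}(d)}$ uniformly in $s$ I would use Assumption~(A): for a Lipschitz test function one has $|\delta_x P_s\1 - \delta_y P_s\1| = |\E_{(x,y)}[Z^X_s - Z^Y_s]|$, which can be estimated via the coupling and the $d$-Lipschitz continuity of $\rho$ (this is essentially the same computation used to prove Proposition~\ref{prop:Aexp-implies-H} and the Lipschitz bound on $\eta$); combined with (H) giving a uniform lower bound $m_s(x)\ge m_s(y)/C_H$, one gets a bound on $\|P_s\1\|_{\mathrm{Lip}(d)}/m_s$ that does not blow up. Taking logarithms, $\log(e^{\lambda_0 t'}m_{t'}(x)) - \log(e^{\lambda_0 t}m_t(x))$ is bounded by a geometric series in $e^{-\alpha}$ (or an integral of $e^{-\alpha u}du$ in continuous time) starting at $t$, hence $e^{\lambda_0 t}m_t(x)$ is Cauchy in $t$, uniformly in $x$, with exponential rate; call the limit $\eta(x)$, which is then automatically bounded.

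Finally I would collect the remaining properties. That $\eta$ is $d$-Lipschitz follows by passing to the limit in the uniform-in-$t$ Lipschitz bound on $e^{\lambda_0 t}m_t(\cdot)$, itself obtained from (A) and the Lipschitz regularity of $\rho$ exactly as in the proof of Proposition~\ref{prop:Aexp-implies-H}. The eigenfunction identity $P_s\eta = e^{-\lambda_0 s}\eta$ comes from $P_s\eta(x) = \lim_t e^{\lambda_0 t}P_s m_t(x) = \lim_t e^{\lambda_0 t} m_{t+s}(x) = e^{-\lambda_0 s}\eta(x)$, using that $P_s$ maps bounded functions to bounded functions and justifying the exchange of limit and integral by the uniform convergence just established. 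The normalization $\nu_{QS}(\eta)=1$ follows by integrating $e^{\lambda_0 t}m_t$ against $\nu_{QS}$: since $\nu_{QS}P_t = e^{-\lambda_0 t}\nu_{QS}$, we get $\nu_{QS}(e^{\lambda_0 t}m_t) = e^{\lambda_0 t}\,\nu_{QS}P_t\1 = 1$ for all $t$, and letting $t\to\infty$ with uniform convergence gives $\nu_{QS}(\eta)=1$; in particular $\eta$ cannot be identically zero, and since it is $d$-Lipschitz with $\nu_{QS}(\eta)=1$ on a space of bounded diameter $\bar d$ it is bounded below by $1 - \|\eta\|_{\mathrm{Lip}(d)}\bar d$... which need not be positive, so instead I would argue positivity directly: $\eta(x) = \lim_t e^{\lambda_0 t}m_t(x) \ge e^{\lambda_0 t_0}m_{t_0}(x)\cdot \inf_x \tfrac{\eta(x)}{e^{\lambda_0 t_0}m_{t_0}(x)}$, and using (H) to compare $m_{t_0}(x)$ with $\nu_{QS}(m_{t_0}) = e^{-\lambda_0 t_0} > 0$ shows $\inf_x m_{t_0}(x) > 0$, hence $\inf_x \eta(x) > 0$. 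The main obstacle I anticipate is the uniform-in-$s$ control of $\|P_s\1\|_{\mathrm{Lip}(d)}$ relative to $m_s$ — i.e., making sure the Lipschitz constant of the survival probability does not grow relative to its size — which is precisely where Assumption~(A) (as opposed to a cruder coupling bound) and the $d$-Lipschitz continuity of $\rho$ are essential.
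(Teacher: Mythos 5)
Your construction of $\eta$, the uniform-Lipschitz bound (via (B) and (H)), the eigenfunction identity, and the normalization $\nu_{QS}(\eta)=1$ all follow essentially the paper's route. The paper works directly with $\eta_t(x)=e^{\lambda_0 t}\E_x Z_t$ and writes $|\eta_{t+s}(x)-\eta_t(x)|=\eta_t(x)\,|\delta_x R_{0,t}^t\eta_s-\nu_{QS}(\eta_s)|$, which is a cleaner packaging of your ratio-plus-logarithm bookkeeping but relies on exactly the same two inputs: the Wasserstein contraction~\eqref{eq:hyp-eta} applied to $\eta_s$, and the uniform bound $\|\eta_s\|_{\text{Lip}(d)}\le C_B(1+\bar d C_B)$ (this is your control of $\|P_s\1\|_{\text{Lip}(d)}\,e^{\lambda_0 s}$, which indeed follows from (B) together with (H)). You also correctly identify in advance that this uniform-in-$s$ Lipschitz control is the crux.

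The one genuine gap is the positivity argument. The chain $\eta(x)\ge e^{\lambda_0 t_0}m_{t_0}(x)\cdot\inf_z\eta(z)/(e^{\lambda_0 t_0}m_{t_0}(z))$ is circular, since it presupposes the infimum on the right is positive; and the observation that $\inf_x m_{t_0}(x)>0$ for one fixed $t_0$ does not by itself give $\inf_x\eta(x)>0$, because that lower bound decays to $0$ as $t_0\to\infty$. The fix, available with the tools you already invoke, is to apply the Harnack-type bound~\eqref{eq:interm-1} to $\eta_t$ itself, uniformly in $t$: since $\nu_{QS}(\eta_t)=1$ forces $\sup_x\eta_t(x)\ge 1$, and $\sup_x\eta_t(x)\le(1+\bar d C_B)\inf_y\eta_t(y)$, one gets $\inf_y\eta_t(y)\ge 1/(1+\bar d C_B)$ for every $t$, and this lower bound passes to the uniform limit $\eta$. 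This is precisely how the paper concludes.
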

The last result also has consequences on the $Q$-process, informally defined as the process $(X_t)_{t\geq 0}$ penalized up to
$+\infty$. The next result is proved in Section~\ref{sec:proof-Q-proc}.

\begin{thm}
  \label{thm:Q-process-compact}
  Under Assumptions (A), we have the following properties.
  \begin{description}
  \item[\textmd{(i) Existence of the $Q$-process.}] There exists a family $(\Q_x)_{x\in E}$ of probability measures on $\Omega$
    such that, for some constant $C$,
    \begin{equation}
      \label{eq:Q-process-compact}
      \left|\frac{\E_x(\mathbbm{1}_A Z_t)}{\E_x Z_t}-\Q_x(A)\right|\leq C e^{-\alpha (t-s)}
        %\lim_{t\rightarrow+\infty}\frac{\E_x(\mathbbm{1}_A Z_t)}{\E_x Z_t}=\Q_x(A)
    \end{equation}
    for all ${\cal F}_s$-measurable set $A$ for all $s\leq t\in I$, where the constant $\alpha$ is the same as in
    Theorem~\ref{thm:main-compact}. The process $(\Omega,({\cal F}_t)_{t\in I},(X_t)_{t\in I},(\Q_x)_{x\in E})$ is an $E$-valued
    homogeneous Markov process. In addition, if $(X_t)_{t\in I}$ is a strong Markov process under $(\P_x)_{x\in E}$, then so is
    $(X_t)_{t\in I}$ under $(\Q_x)_{x\in E}$.
  \item[\textmd{(ii) Transition semigroup.}] The semigroup $(\widetilde{P}_t)_{t\in I}$ of the Markov process $(X_t)_{t\geq 0}$ under
    $(\Q_x)_{x\in E}$ is given by
    % \begin{align*}
    %   \tilde{p}(x;t,dy)=e^{\lambda_0 t}\frac{\eta(y)}{\eta(x)}p(x;t,dy).
    % \end{align*}
    % In other words, for all $\varphi\in{\cal B}(E)$ and $t\geq 0$,
    \begin{equation}
      \label{eq:semi-group-Q}
      \widetilde{P}_t\varphi(x)=\frac{e^{\lambda_0 t}}{\eta(x)}P_t(\eta\varphi)(x)
    \end{equation}
    for all bounded measurable $\varphi$ and $t\in I$.
  \item[\textmd{(iii) Exponential ergodicity in Wasserstein distance.}] The probability measure $\nu_Q$ on $E$ defined by
    \[
    \nu_Q(dx) =\eta(x)\nu_{QS}(dx).
    \]
    is the unique invariant distribution of $(X_t)_{t\in I}$ under $(\Q_x)_{x\in E}$. In addition, for any initial
    distributions $\mu$ and $\nu$ on $E$,
    \begin{equation}
      \label{eq:Q-proc-compact}
      \mathcal{W}_{d}(\Q_{\mu}(X_t\in\cdot),\Q_{\nu}(X_t\in\cdot))\leq \frac{C_0(1\vee(\kappa\bar{d}))}{\beta}\,e^{-\alpha t}\,\mathcal{W}_d(\mu,\nu),
    \end{equation}
    where $\Q_\mu=\int_E \Q_x\,\mu(dx)$ and the constants $\kappa$, $C_0$ and $\beta$ are defined in the proof of
    Theorem~\ref{thm:main-compact} (see Lemma~\ref{lem4bis}).
  \end{description}
\end{thm}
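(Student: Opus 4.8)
The plan is to follow the classical route for constructing the $Q$-process from a Feynman--Kac (penalized) semigroup, using the two ingredients already available: the function $\eta$ from Theorem \ref{thm:eta-compact} (with $P_t\eta=e^{-\lambda_0 t}\eta$, $\eta$ bounded away from $0$ and above on $E$, $\nu_{QS}(\eta)=1$) and the Wasserstein contraction \eqref{eq:main-compact}. For part (i), I would define the candidate $Q$-law on $\mathcal F_s$-measurable sets $A$ by $\Q_x(A):=\lim_{t\to\infty}\E_x(\mathbbm 1_A Z_t)/\E_x Z_t$ and first show this limit exists with the claimed exponential rate. Writing $\E_x(\mathbbm 1_A Z_t)=\E_x(\mathbbm 1_A Z_s\,\E_{X_s}Z_{t-s})$ by the Markov property, and using Theorem \ref{thm:eta-compact} to replace $\E_{X_s}Z_{t-s}$ by $e^{-\lambda_0(t-s)}\eta(X_s)$ up to an error that is $O(e^{-\alpha'(t-s)})$ uniformly, together with the analogous asymptotics for $\E_x Z_t=e^{-\lambda_0 t}(\eta(x)+O(e^{-\alpha' t}))$, gives
\[
  \frac{\E_x(\mathbbm 1_A Z_t)}{\E_x Z_t}\xrightarrow[t\to\infty]{} \frac{\E_x(\mathbbm 1_A Z_s\,\eta(X_s))}{\eta(x)},
\]
with exponential speed $e^{-\alpha(t-s)}$ after bounding the errors using $\eta$ bounded above and below; this simultaneously proves \eqref{eq:Q-process-compact} and identifies $\Q_x(A)=\E_x(\mathbbm 1_A Z_s\eta(X_s))/\eta(x)$ on $\mathcal F_s$. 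Kolmogorov consistency of this family over $s$ follows from $P_{s'-s}\eta=e^{-\lambda_0(s'-s)}\eta$, so $(\Q_x)_{x\in E}$ is well defined on $\Omega$. The Markov (and strong Markov, when $X$ is strong Markov under $\P_x$) property is then the standard $h$-transform computation: for $s\le u$ and bounded measurable $\varphi$, $\E^{\Q_x}[\varphi(X_u)\mid\mathcal F_s]=e^{\lambda_0(u-s)}\eta(X_s)^{-1}\E_{X_s}[Z_{u-s}\eta(X_{u-s})]\,\varphi$-type identity, which is $\mathcal F_s$-measurable and depends only on $X_s$.

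For part (ii), the transition semigroup is read off directly from the previous identity: taking $A$ to depend only on $X_s$ through $\varphi(X_s)$ and letting the "outer" time go to infinity, one gets $\widetilde P_t\varphi(x)=e^{\lambda_0 t}\eta(x)^{-1}\E_x[Z_t\eta(X_t)\varphi(X_t)]=e^{\lambda_0 t}\eta(x)^{-1}P_t(\eta\varphi)(x)$, which is \eqref{eq:semi-group-Q}; the semigroup property follows from that of $P_t$ together with $P_t\eta=e^{-\lambda_0 t}\eta$. For part (iii), invariance of $\nu_Q=\eta\,\nu_{QS}$ is immediate: $\nu_Q\widetilde P_t\varphi=e^{\lambda_0 t}\int \eta(x)^{-1}P_t(\eta\varphi)(x)\,\eta(x)\nu_{QS}(dx)=e^{\lambda_0 t}\nu_{QS}P_t(\eta\varphi)=e^{\lambda_0 t}e^{-\lambda_0 t}\nu_{QS}(\eta\varphi)=\nu_Q(\varphi)$, using that $\nu_{QS}$ is quasi-stationary with rate $\lambda_0$; uniqueness will follow from the contraction estimate \eqref{eq:Q-proc-compact} once that is established.

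The main work, and the step I expect to be the real obstacle, is the Wasserstein ergodicity bound \eqref{eq:Q-proc-compact}. The idea is to relate $\Q_\mu(X_t\in\cdot)$ to a conditional distribution of the penalized semigroup and then invoke \eqref{eq:main-compact}. Concretely, from \eqref{eq:semi-group-Q} one has, for $\phi\in\mathrm{Lip}_1(d)$,
\[
  \Q_\mu(\phi(X_t))-\Q_\nu(\phi(X_t))
  = \frac{\mu(\eta\,\widetilde{?})}{\cdots}
\]
— more precisely, writing $\Q_\mu(\phi(X_t))=\int e^{\lambda_0 t}\eta(x)^{-1}P_t(\eta\phi)(x)\,\mu_Q(dx)$ with $\mu_Q(dx)=\eta(x)\mu(dx)/\mu(\eta)$ after renormalizing (and similarly for $\nu$), one reduces to comparing $\mu_Q' P_t(\eta\phi)/\mu_Q' P_t\mathbbm 1$-type ratios where $\mu_Q'$ ranges over probability measures; since $\eta\phi$ is Lipschitz with norm controlled by $\|\eta\|_\infty+\bar d\,\|\eta\|_{\mathrm{Lip}(d)}$ (using $\phi$ bounded by $\bar d$ up to constants and $d$ bounded), one applies \eqref{eq:main-compact} to the normalized penalized flows started from $\mu_Q$ and $\nu_Q$, and then $\mathcal W_d(\mu_Q,\nu_Q)\le \kappa'\,\mathcal W_d(\mu,\nu)$ because pushing forward an optimal coupling of $\mu,\nu$ through reweighting by $\eta$ (bounded away from $0$) costs at most a factor depending on $\bar d$, $\|\eta\|_{\mathrm{Lip}(d)}$ and $\inf\eta$. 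Carefully tracking these reweighting constants — which is where the constants $\kappa$, $C_0$, $\beta$ from Lemma \ref{lem4bis} enter — and making sure the passage from the "outer-time-to-infinity" limit to a clean bound in $t$ introduces no extra loss, is the delicate bookkeeping; the conceptual content is entirely the $h$-transform identity \eqref{eq:semi-group-Q} combined with Theorem \ref{thm:main-compact}.
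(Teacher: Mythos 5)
Your treatment of parts (i) and (ii) coincides with the paper's: define $\Q_x(A)$ via the $h$-transform formula $\E_x(\mathbbm 1_A Z_s\eta(X_s))e^{\lambda_0 s}/\eta(x)$, verify the exponential rate using the uniform convergence $\eta_t\to\eta$ from Theorem~\ref{thm:eta-compact}, and deduce the semigroup identity~\eqref{eq:semi-group-Q}; the paper likewise delegates Kolmogorov consistency and the (strong) Markov property to the classical argument of~\cite{ChampagnatVillemonais2016}.

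For part (iii), however, you take a genuinely different — and weaker — route, and in the place where you rightly sense the real obstacle lies, you miss the paper's key observation. The paper does \emph{not} reweight the initial law by $\eta$ and re-invoke~\eqref{eq:main-compact}. Instead it notices that the semigroup $R_{0,s}^T$ defined earlier (penalize up to $T$, look at time $s$) is exactly the conditional law whose Lipschitz contraction estimate~\eqref{eq:QE} was proved \emph{uniformly in $T\geq s$}; since $\delta_x R_{0,s}^T\phi = e^{\lambda_0 s}\E_x[Z_s\phi(X_s)\eta_{T-s}(X_s)]/\eta_T(x)$ converges to $\delta_x\widetilde P_s\phi$ as $T\to\infty$ (by Theorem~\ref{thm:eta-compact}), the contraction passes directly to $\widetilde P_s$ with the \emph{same} constant $C_0/\beta$ in the $d_\kappa$ metric, and then one integrates over an optimal coupling of $\mu,\nu$ and converts $d_\kappa\leftrightarrow d$. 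This gives exactly the stated bound~\eqref{eq:Q-proc-compact}. Your reweighting route has two problems. First, the weight you write, $\mu_Q(dx)=\eta(x)\mu(dx)/\mu(\eta)$, is the wrong one: since $\widetilde P_t\phi(x)=e^{\lambda_0 t}\eta(x)^{-1}P_t(\eta\phi)(x)$, the measure appearing naturally is $\tilde\mu\propto\eta^{-1}\mu$, and even then $\Q_\mu(\phi(X_t))=\tilde\mu P_t(\eta\phi)/\tilde\mu P_t(\eta)$ — a ratio with $\eta$ rather than $\mathbbm 1$ in the denominator, so~\eqref{eq:main-compact} does not apply directly and you need an extra fraction-comparison step. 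Second, even if you carry that out, the accumulated reweighting constants (involving $\sup\eta$, $\inf\eta$, $\|\eta\|_{\mathrm{Lip}(d)}$, and the cost of pushing an optimal coupling through $\eta^{-1}$) will not reproduce the precise constant $C_0(1\vee(\kappa\bar d))/\beta$ claimed in~\eqref{eq:Q-proc-compact}. So while your approach can plausibly be made to yield \emph{some} exponential Wasserstein ergodicity for the $Q$-process, it will not give the theorem as stated without further work, and the paper's limiting-in-$T$ argument is both simpler and sharper.
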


Finally, we prove the existence and convergence to a unique quasi-ergodic distribution (for general definition and properties of
quasi-ergodic distribution, we refer to~\cite{BreyerRoberts1999,ChampagnatVillemonais2016}; see also~\cite{Wang2021} where a similar
result as the following one is proved for a certain class of diffusion processes): for all $x\in E$ and $t\in I\setminus\{0\}$, we
define the probability measure
\[
  \mu^x_t=\frac{\E_x\left[Z_t\left(\frac{1}{t}\int_0^t \delta_{X_s}ds\right)\right]}{\E_x Z_t}\text{ if }I=[0,+\infty)\text{ or
  }\mu^x_t=\frac{\E_x\left[Z_t\left(\frac{1}{t}\sum_{s<t}\delta_{X_s}\right)\right]}{\E_x Z_t}\text{ if }I=\Z_+,
\]
which can be interpreted as the mean occupation measure of the process $X$ conditioned to non-absorption.

\begin{thm}
  \label{thm:QED}
  Under Assumption (A), there exists a contant $C>0$ such that, for all $x\in E$ and $t\in I\setminus\{0\}$,
  \[
    \mathcal{W}_d\left(\mu^x_t,\nu_Q\right)\leq \frac{C}{t}.
  \]
\end{thm}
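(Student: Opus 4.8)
The plan is to combine Kantorovich--Rubinstein duality with the identity relating the penalized semigroup to the $Q$-process semigroup, so as to reduce $\mu^x_t(\phi)$ to a Ces\`aro average of $\widetilde P_s\phi(x)$; the $1/t$ rate then comes from the exponential ergodicity of Theorem~\ref{thm:Q-process-compact}(iii). Fix $\phi\in\text{Lip}_1(d)$; since $d\le\bar d$ we have $\text{osc}(\phi)\le\bar d$, so replacing $\phi$ by $\phi-\inf_E\phi$ we may assume $0\le\phi\le\bar d$ without changing $\mu^x_t(\phi)-\nu_Q(\phi)$. Write $f_r(y):=e^{\lambda_0 r}\E_y Z_r=e^{\lambda_0 r}P_r\mathbbm{1}(y)$. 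By Theorem~\ref{thm:eta-compact} there are $\bar C,\gamma>0$ with $\|f_r-\eta\|_\infty\le\bar C e^{-\gamma r}$ for all $r\ge0$ (enlarging $\bar C$ if necessary to absorb the bounded range $r\le r_0$), and, since $\rho$ is bounded (the standing assumption forces $\text{osc}(\rho)<\infty$, whence $\sup\rho<\infty$) and $\eta$ is $d$-Lipschitz, bounded, and bounded away from $0$, there are constants $0<c_0\le f_{\max}<\infty$ such that $c_0\le f_r(y)\le f_{\max}$ for all $r\ge0$ and $y\in E$.

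First I would use the Markov property at time $s$ to write, for $0\le s\le t$,
\[
\E_x[Z_t\phi(X_s)]=\E_x\bigl[Z_s\phi(X_s)\,P_{t-s}\mathbbm{1}(X_s)\bigr]=e^{-\lambda_0(t-s)}P_s\bigl(\phi\,f_{t-s}\bigr)(x),
\]
then split $f_{t-s}=\eta+(f_{t-s}-\eta)$ and use the semigroup identity $P_s(\eta\phi)(x)=e^{-\lambda_0 s}\eta(x)\widetilde P_s\phi(x)$ (which is~\eqref{eq:semi-group-Q}) together with $\E_x Z_t=e^{-\lambda_0 t}f_t(x)$ to get
\[
\frac{\E_x[Z_t\phi(X_s)]}{\E_x Z_t}=\widetilde P_s\phi(x)\,\frac{\eta(x)}{f_t(x)}+\frac{e^{\lambda_0 s}}{f_t(x)}\,\E_x\bigl[Z_s\phi(X_s)(f_{t-s}(X_s)-\eta(X_s))\bigr].
\]
Bounding $\|\widetilde P_s\phi\|_\infty\le\bar d$, $|\eta(x)/f_t(x)-1|\le\bar C e^{-\gamma t}/c_0$, the last expectation by $\bar d\,\bar C e^{-\gamma(t-s)}\E_x Z_s$, and using $e^{\lambda_0 s}\E_x Z_s=f_s(x)\le f_{\max}$ and $f_t(x)\ge c_0$, one obtains the pointwise-in-$s$ estimate
\[
\Bigl|\frac{\E_x[Z_t\phi(X_s)]}{\E_x Z_t}-\widetilde P_s\phi(x)\Bigr|\le\frac{\bar d\,\bar C}{c_0}\bigl(e^{-\gamma t}+f_{\max}\,e^{-\gamma(t-s)}\bigr).
\]
Since by Fubini $\mu^x_t(\phi)=\tfrac1t\int_0^t\frac{\E_x[Z_t\phi(X_s)]}{\E_x Z_t}\,ds$, integrating the previous line over $s\in(0,t)$, dividing by $t$, and using $t e^{-\gamma t}\le(\gamma e)^{-1}$ and $\int_0^t e^{-\gamma(t-s)}\,ds\le\gamma^{-1}$ gives $\bigl|\mu^x_t(\phi)-\tfrac1t\int_0^t\widetilde P_s\phi(x)\,ds\bigr|\le C_1'/t$ for a constant $C_1'$ independent of $x$ and $\phi$.

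It remains to estimate $\tfrac1t\int_0^t\widetilde P_s\phi(x)\,ds-\nu_Q(\phi)$. Since $\nu_Q$ is the invariant law of the $Q$-process and $\phi\in\text{Lip}_1(d)$, Theorem~\ref{thm:Q-process-compact}(iii) with $\mu=\delta_x$, $\nu=\nu_Q$ together with Kantorovich--Rubinstein duality give $|\widetilde P_s\phi(x)-\nu_Q(\phi)|\le\mathcal{W}_d(\Q_x(X_s\in\cdot),\nu_Q)\le C_2'e^{-\alpha s}$ with $C_2'=C_0(1\vee(\kappa\bar d))\bar d/\beta$, hence $\bigl|\tfrac1t\int_0^t\widetilde P_s\phi(x)\,ds-\nu_Q(\phi)\bigr|\le\tfrac1t\int_0^t C_2'e^{-\alpha s}\,ds\le C_2'/(\alpha t)$. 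Adding the two bounds and taking the supremum over $\phi\in\text{Lip}_1(d)$ yields $\mathcal{W}_d(\mu^x_t,\nu_Q)\le C/t$ with $C=C_1'+C_2'/\alpha$ independent of $x$. The discrete-time case is identical, with $\int_0^t\cdots\,ds$ replaced by $\sum_{s<t}\cdots$ and the Markov step using $Z_t=Z_s\,p(X_s)\cdots p(X_{t-1})$. I expect the only real work to be the bookkeeping of the three error sources---replacing $P_{t-s}\mathbbm{1}$ by $e^{-\lambda_0(t-s)}\eta$, replacing $\E_x Z_t$ by $e^{-\lambda_0 t}\eta(x)$, and the ergodic defect of the $Q$-process---and verifying that each contributes only $O(1/t)$ after integrating in $s$ and dividing by $t$; the single genuinely delicate input is the uniform two-sided bound $0<c_0\le f_r\le f_{\max}<\infty$, which rests on the boundedness of $\rho$ and on Theorem~\ref{thm:eta-compact}.
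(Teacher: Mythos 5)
Your proof is correct and follows essentially the same strategy as the paper's: reduce $\mu^x_t(\phi)$ to a Ces\`aro average of the $Q$-process semigroup applied to $\phi$, bound the replacement error (from substituting $\eta$ for $\eta_{t-s}$ and $\eta_t$, in your notation $f_{t-s}$ and $f_t$) by terms that integrate to $O(1/t)$ using Theorem~\ref{thm:eta-compact} and the uniform two-sided bounds on $\eta_t$, and then apply the exponential ergodicity of Theorem~\ref{thm:Q-process-compact}(iii) to the Ces\`aro term. The bookkeeping is organized slightly differently (the paper centers $\phi$ at $\nu_Q(\phi)$ and invokes the already-established inequality~\eqref{eq:pf-Q-proc}, whereas you split $f_{t-s}=\eta+(f_{t-s}-\eta)$ and handle the ratio $\eta(x)/f_t(x)$ separately), but the argument is the same in substance; note only that the uniform bounds $c_0\le f_r\le f_{\max}$ come from inequality~\eqref{eq:interm-1} and $\nu_{QS}(\eta_r)=1$ (i.e.\ from (B)/(H)), not merely from the boundedness of $\rho$.
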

This theorem is proved in Section~\ref{sec:proof-QED}.
% \edouard{peut-être dire que dans \cite{Wang2021}, il y a un résultat de ce type, notamment que la Cv est en $1/t$?}

\subsection{Proof of Theorem~\ref{thm:main-compact} under  additional assumptions}
\label{sec:proofunderABC}
We introduce the following additional assumptions:
\begin{description}
% \item[\textmd{(A')}]  There exists a nonincreasing function $\gamma:I\rightarrow \R_+$ converging
%   to 0 and for all $x,y\in E$ there exists a Markovian coupling  $\P_{(x,y)}$ between $\mathbb{P}_x$ and $\mathbb{P}_y$ such
%   that for all $t\in I$,
%  \begin{equation*}
%     \E_{(x,y)}\left[G^X_{t}d(X_t,Y_t)\right]\leq \gamma(t) d(x,y). % C_B e^{-\gamma t} d(x,y).
%   \end{equation*}
\item[(B)]  (log-Lipschitz survival probability)
  There exists a constant $C_B>0$ such that the coupling measure $\P_{(x,y)}$ of~(A)
  also satisfies, for all $x,y\in E$ and $t\in I$,
  \[
    \E_{(x,y)} |Z_{t}^X-Z_{t}^Y|\leq C_B d(x,y)\E_{x} Z_{t}.
  \]
\item[(C)]    (compatibility of penalization and coupling) There exists a constant $C_C>1$ such that the coupling measure $\P_{(x,y)}$ of~(A)
  also satisfies that, for all $t\in I$ and $x,y\in E$,
  \[
    \E_{(x,y)} \left[G^X_{t}\wedge G^Y_{t}\right]\geq \frac{1}{C_C}.
  \]
\end{description}
In this section, we prove 
\begin{thm}
\label{thm:main-under-ABC}
The conclusions of Theorem \ref{thm:main-compact} hold true under assumptions  (A), (B) and (C) with% . In addition, in the case where $\gamma(t) = C_A e^{  - \gamma_A t}$, we can take in \eqref{eq:main-compact} and \eqref{eq:hyp-eta}
  \begin{equation}
    \label{eq:alpha-explicite}
    % \alpha =\gamma_A  \frac{\log(\frac{2C_C}{2C_C-1})}{\log(2C_AC_C)}.
    \alpha=\frac{\gamma_A\log(\frac{2C_C}{2C_C-1})}{\log\left(2 C_A(1+C_B\bar d)[1\vee\frac{2 C_B\bar d C_C^2}{C_C-1}]\right)}.
  \end{equation}
\end{thm}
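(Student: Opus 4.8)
The plan is to establish the three conclusions of Theorem~\ref{thm:main-compact} --- the contraction estimate~\eqref{eq:main-compact}, existence of $\nu_\text{QS}$, and the attraction estimate~\eqref{eq:hyp-eta} --- working entirely under (A'), (B) and (C). The central object is the normalized penalized semigroup acting on probability measures, $\mu\mapsto\Phi_t(\mu):=\frac{\mu P_t}{\mu P_t\mathbbm1}$, and the key will be to control the Wasserstein distance $\mathcal W_d(\Phi_t(\delta_x),\Phi_t(\delta_y))$ by coupling the unkilled processes and then correcting for the fact that the two survival normalizations differ. Given $x,y\in E$, take the Markovian coupling $\P_{(x,y)}$ from (A'), and test $\Phi_t(\delta_x)-\Phi_t(\delta_y)$ against an arbitrary $\phi\in\text{Lip}_1(d)$ using Kantorovich--Rubinstein duality. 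Writing $\Phi_t(\delta_x)(\phi)-\Phi_t(\delta_y)(\phi)=\E_{(x,y)}[G_t^X\phi(X_t)]-\E_{(x,y)}[G_t^Y\phi(Y_t)]$, I would split this as $\E_{(x,y)}[G_t^X(\phi(X_t)-\phi(Y_t))]+\E_{(x,y)}[(G_t^X-G_t^Y)\phi(Y_t)]$. The first term is bounded by $\gamma(t)\,d(x,y)$ directly from (A') since $\phi$ is $1$-Lipschitz; the second term is the delicate one, since $\phi$ is only bounded (by $\bar d$ up to an additive constant) and $G_t^X-G_t^Y$ need not be small pointwise --- its smallness has to come from integrating against the coupling.

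The heart of the argument is therefore to show that the second term contracts as well, and this is where (B) and (C) enter. Assumption (B) gives $|\E_xZ_t-\E_yZ_t|\le C_B\bar d\,\E_xZ_t$, hence a two-sided bound relating the normalizations $\E_xZ_t$ and $\E_yZ_t$ (in particular the Harnack-type estimate (H) with a constant depending on $C_B\bar d$); this lets me replace $G_t^Y=Z_t^Y/\E_yZ_t$ by $Z_t^Y/\E_xZ_t$ up to a multiplicative error of order $C_B\bar d$. After this reduction, $\E_{(x,y)}[(G_t^X-G_t^Y)\phi(Y_t)]$ is essentially $\frac1{\E_xZ_t}\E_{(x,y)}[(Z_t^X-Z_t^Y)\phi(Y_t)]$ plus a small correction. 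To bound $\E_{(x,y)}[(Z_t^X-Z_t^Y)\phi(Y_t)]$ I would use the Lipschitz regularity of $\rho$ (the standing assumption) together with the coupling: $|Z_t^X-Z_t^Y|$ is controlled by $\int_0^t\|\rho\|_{\text{Lip}(d)}d(X_s,Y_s)\,ds$ (or the discrete sum), and since the coupling is Markovian this can be propagated back to $d(x,y)$ via (A') applied at intermediate times --- but one must be careful that (A') only controls $\E[G_s^X d(X_s,Y_s)]$, the $G_s^X$-weighted distance, so the estimate should be organized so that the weight $G_s^X$ appears naturally. Assumption (C), giving $\E_{(x,y)}[G_t^X\wedge G_t^Y]\ge 1/C_C$, is what prevents the normalization from degenerating: it forces a positive ``overlap'' between the two conditioned laws and, combined with a standard Doeblin-type coupling argument (peeling off the common mass $G_t^X\wedge G_t^Y$), it is what actually produces a strict contraction factor strictly less than $1$ rather than merely boundedness. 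I expect that iterating the one-step estimate over blocks of a suitably large time $T$ --- chosen so that $\gamma(T)$ and the correction terms are small relative to the overlap lower bound $1/C_C$ --- yields $\mathcal W_d(\Phi_{nT}(\delta_x),\Phi_{nT}(\delta_y))\le\rho^n\,d(x,y)$ for some $\rho<1$, and then the semigroup property plus a crude one-step bound upgrades this to~\eqref{eq:main-compact} for general $t$ with constants $C_1,\alpha$; tracking the block length and $\rho$ through this computation in the special case $\gamma(t)=C_Ae^{-\gamma_At}$ is exactly what gives the explicit formula~\eqref{eq:alpha-explicite}.

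Passing from point masses to arbitrary $\mu,\nu$ is then routine: $\Phi_t$ does not act linearly, but the estimate $\mathcal W_d(\Phi_t(\mu),\Phi_t(\nu))\le C_1e^{-\alpha t}\mathcal W_d(\mu,\nu)$ follows by writing $\Phi_t(\mu)$ as a mixture of the $\Phi_t(\delta_x)$ reweighted by the (normalized) survival kernel, choosing an optimal coupling $\pi$ of $\mu$ and $\nu$, and combining the pointwise contraction with the Harnack bound (H)/(B) to control the discrepancy between the two reweightings; the bounded-distance hypothesis $d\le\bar d$ is used crucially here to absorb the reweighting error. Finally, existence and uniqueness of $\nu_\text{QS}$ comes from a Cauchy/fixed-point argument: \eqref{eq:main-compact} shows $(\Phi_t(\delta_{x_0}))$ is Cauchy in the complete metric space $(\mathcal P(E),\mathcal W_d)$ (completeness holds because $(E,d)$ is Polish and $d$ is bounded), its limit $\nu_\text{QS}$ is $\Phi_t$-invariant for all $t$ by continuity and the semigroup relation $\Phi_{t+s}=\Phi_t\circ\Phi_s$, invariance under $\Phi_t$ is precisely the quasi-stationarity condition, and \eqref{eq:hyp-eta} is then~\eqref{eq:main-compact} with $\nu=\nu_\text{QS}$; uniqueness is immediate from~\eqref{eq:main-compact}. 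The main obstacle, as indicated, is the second term $\E_{(x,y)}[(G_t^X-G_t^Y)\phi(Y_t)]$: making its contraction quantitative requires simultaneously using the Lipschitz control of $\rho$, the Markov property of the coupling to reach back to $d(x,y)$, and the overlap bound (C) to keep the normalizing constants comparable --- and doing all of this while only ever having access to the $G^X$-weighted coupling estimate of (A').
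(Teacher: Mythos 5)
Your overall architecture matches the paper's: Kantorovich--Rubinstein duality, a decomposition into a ``Lipschitz'' term controlled by (A') and a ``normalization discrepancy'' term controlled by (B) and (C), block iteration, and a Banach fixed-point argument for the QSD. You have correctly identified that the second term $\E_{(x,y)}[(G_t^X-G_t^Y)\phi(Y_t)]$ is the delicate one, and your remark that it must be handled via a Doeblin-type peeling off of the common mass $G_t^X\wedge G_t^Y$ is the right instinct. However, there is a genuine gap precisely at the point you flag as ``routine'': the two bounds you have at your disposal are of incompatible scalings, and your sketch never explains how to reconcile them.

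Concretely, bounding the second term by $\|\phi\|_\infty\,\E_{(x,y)}|G_t^X-G_t^Y|\leq 2\|\phi\|_\infty\bigl(1-\E_{(x,y)}[G_t^X\wedge G_t^Y]\bigr)\leq 2\|\phi\|_\infty(1-1/C_C)$ produces a \emph{constant}, not a multiple of $d(x,y)$ --- so it cannot contribute to a contraction when $x$ and $y$ are close. The alternative Lipschitz-style bound you sketch (via (B) and the Lipschitz control of $\rho$, leading to something like $C\|\phi\|_\infty\,d(x,y)$) does scale in $d(x,y)$, but the constant $C$ is not below $1$ in general and in fact involves $\|\rho\|_{\text{Lip}(d)}$, so it cannot be used when $d(x,y)$ is of order $\bar d$. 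The paper's resolution is the Hairer--Mattingly-style device you never name: replace $d$ by $d_\kappa=(\kappa d)\wedge 1$, normalize $\phi$ so that $\|\phi\|_{\text{Lip}(d_\kappa)}\leq 1$ and $\|\phi\|_\infty\leq 1/2$, and split into cases $d_\kappa(x,y)<1$ (where the Lipschitz bound gives $\frac{C_B}{\kappa}d_\kappa+\gamma(t)(1+C_B\bar d)d_\kappa$, which is $\leq\beta d_\kappa$ after choosing $\kappa$ large and $t$ large) and $d_\kappa(x,y)=1$ (where your Doeblin bound gives $\kappa(1+C_B\bar d)\bar d\,\gamma(t)+1-1/C_C\leq\beta$). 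The parameters $\beta=1-\tfrac{1}{2C_C}$ and $\kappa=\tfrac{C_B}{\beta-1/2}$ are precisely tuned so both cases close, and it is this tuning that produces the explicit $\alpha$ in~\eqref{eq:alpha-explicite}. Without some such device, neither of your two bounds alone yields a contraction in $\mathcal W_d$.

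A second, smaller gap: the iteration cannot be run directly on the nonlinear map $\mu\mapsto\frac{\mu P_t}{\mu P_t\mathbbm 1}$ starting from Diracs, because after one step the measure is no longer a Dirac and you need contraction for general $\mu,\nu$ to continue --- which is exactly the thing you are trying to prove. The paper sidesteps this by moving to the time-inhomogeneous but \emph{linear} semigroup $R^T_{s,t}\phi(x)=\frac{\E_x[Z_{T-s}\phi(X_{t-s})]}{\E_x Z_{T-s}}$, which genuinely satisfies $R^T_{r,s}R^T_{s,t}=R^T_{r,t}$ and therefore admits a clean Dirac-to-Dirac induction via optimal couplings at intermediate times; only at the end, taking $T=t$ and introducing the reweighted measures $\mu'_T$, $\nu'_T$, does one recover the conditional distributions. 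Your ``mixture of $\Phi_t(\delta_x)$ reweighted by the normalized survival kernel'' is morally the same object as $\mu'_T$, but you should realize that this reweighting is what makes the iteration linear and hence permissible, and that it requires a further comparison of $\mathcal W_{d_\kappa}(\mu'_T,\nu'_T)$ with $\mathcal W_{d_\kappa}(\mu,\nu)$, which uses (B) once more. Also note that controlling $\E_{(x,y)}|Z_t^X-Z_t^Y|$ by $d(x,y)$ (the quantity you invoke when invoking the Lipschitz control of $\rho$) is strictly stronger than (B) and needs a separate derivation combining (A'), (B) and the Markov property; it does not follow from (B) alone.
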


 In Section~\ref{sec:main-proof-2}, we will prove that (A) implies Conditions~(B),and~(C), so
  that Theorem~\ref{thm:main-compact} follows from Theorem~\ref{thm:main-under-ABC}. However, the constants $C_B$ and $C_C$ obtained
  from this proof are not sharp, and we will see in the examples of Section~\ref{sec:almost-sure-contraction} that sharper constants
  can sometimes be obtained directly. In this case, Theorem~\ref{thm:main-under-ABC} provides better bounds than
  Theorem~\ref{thm:main-compact}. This explains why we state Theorem~\ref{thm:main-under-ABC} under the three Conditions~(A), (B)
  and~(C). 

\begin{rem}
  \label{rem:explicit}
  % We make a more detailed analysis of~\eqref{eq:alpha-explicite} in cases with almost sure contracting coupling
  % in Section~\ref{sec:almost-sure-contraction}.
  Obtaining good bounds for the constant $\alpha$ may be important because it is known
  that, for unkilled processes, convergence rates may be better in Wasserstein distance than in total variation (see
  e.g.~\cite[Theorem 2.2]{Malrieu2015} for an example with PDMP).   An explicit expression for the pre-exponential factor $C_1$ can also be obtained from the proof.
\end{rem}

\begin{rem}
  Observe that $C_A\geq 1$ (take $t=0$ in (A)) and $C_C\geq 1$. In particular, $\alpha\leq\gamma_A\frac{\log 2}{\log 2}=\gamma_A$. This means that the rate of convergence we obtain for the penalized process  is lower than the exponential decay rate given by Assumption (A). See Section \ref{sec:almost-sure-contraction} for complementary discussions on this topic.
\end{rem}

% \begin{rem}
% Assumption (A) can be weakened: it is sufficient to find, for all $x,y \in E$ and  all $t \in I$ a coupling $\P_{(x,y)}$ between $\mathbb{P}_x$ and $\mathbb{P}_y$ such that \begin{equation*}
%     \E_{(x,y)}\left[G^X_{t}d(X_t,Y_t)\right]\leq \gamma(t) d(x,y). % C_B e^{-\gamma t} d(x,y).
%   \end{equation*}
% This means that it is sufficient to find a coupling for each fixed $t \in I$. In particular, a coupling of the whole trajectories do
% not need to be Markov.\color{red}{A VERIFIER...}\denis{Un peu de Markov est nécessaire tout de même ?}
% \end{rem}

% \begin{rem}
%   \textcolor{red}{Sous (A), (B) et (C), $Z_t$ peut être une fonctionelle multiplicative qcq} \textcolor{blue}{il n'est peut-\^etre
%     pas utile de garder cette remarque}
% \end{rem}

% For any distance $\delta$ on $E$, we denote by $\text{Lip}(\delta)$ the set of $\delta$-Lipschitz real functions on $E$. We denote by
% $\|\phi\|_{\text{Lip}(\delta)}$ the Lipschitz norm of $\phi\in\text{Lip}(\delta)$, i.e.
% \[
%   \|\phi\|_{\text{Lip}(\delta)}=\sup_{x\neq y\in E}\frac{|\phi(x)-\phi(y)|}{\delta(x,y)}
% \]
% and by $\text{Lip}_1(\delta)$ the unit ball of $\text{Lip}(\delta)$. Note that, since $d$ is bounded,
% $\text{Lip}(d)\subset L^\infty(E)$.

The proof of Theorem~\ref{thm:main-under-ABC} relies on four lemmas, stated below, allowing to deduce contraction
  estimates in Wasserstein distance for the following penalized time-inhomogeneous semigroup: 
\[
% \delta_x Q^T_{s,t}f=\delta_x Q^{T-s}_{0,t-s}f=
R^T_{s,t}f(x)=R^{T-s}_{0,t-s}f(x)=\frac{\E_{x} Z_{T-s}f(X_{t-s})}{\E_{x} Z_{T-s}}=\frac{\delta_x P_{t-s}(f
  P_{T-t}\1)}{\delta_x P_{T-s}\1},\quad\forall s\leq t\leq T\in I.
\]
As usual, we define for all probability measure $\mu$ on $E$
\[
\mu R^T_{s,t}f=\int_E R^T_{s,t}f(x)\,\mu(dx).
\]
In particular, $\delta_x R^T_{s,t}f=R^T_{s,t}f(x)$. Recall also that $R_{s,t}^T$ satisfies the (time-inhomogeneous) semigroup property:
\[
R_{r,s}^TR_{s,t}^T=R^T_{r,t},\quad\forall r\leq s\leq t\leq T\in I.
\]
Note that, when $\mu$ is not a Dirac mass, we usually don't have equality between $\frac{\mu P_t}{\mu P_t\mathbbm{1}}$ and
$\mu R_{0,t}^t$.

In all the proof we denote
\[
  \gamma(t)=C_A e^{-\gamma_A t}.
\]
 In the first lemma we extend (A) to penalizations up
to any time $T\geq t$.

\begin{lem}
  \label{lem:A-strong}
  Assumptions~(A) and~(B) imply that for all $x,y\in E$ and $t\leq T\in I$,
  \begin{equation*}
    % \label{Aexpstrong}
    \E_{(x,y)}\left[G_T^Xd(X_t,Y_t)\right] \leq (1+C_B\bar d)\gamma(t) d(x,y).
  \end{equation*}
\end{lem}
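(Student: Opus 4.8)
The plan is to upgrade the coupling estimate of~(A'), which controls the penalization up to time $t$, into one controlling the penalization up to any later time $T$, by conditioning on the configuration of the coupled process at time $t$ and handling the remaining penalization between $t$ and $T$ through the Harnack-type inequality~(B). First I would decompose $Z_T^X=Z_t^X\,\widetilde Z_{t,T}$, where $\widetilde Z_{t,T}$ is the penalization carried by the $X$-path between times $t$ and $T$ (that is, $p(X_t)\cdots p(X_{T-1})$ in discrete time, $\exp(-\int_t^T\rho(X_s)\,ds)$ in continuous time). Since the coupling $\P_{(x,y)}$ is Markovian and has $\P_x$ as its first marginal, conditioning on the natural filtration $\mathcal F_t$ of the coupled process gives $\E_{(x,y)}[\widetilde Z_{t,T}\mid\mathcal F_t]=h(X_t)$, where $h(z):=\E_z Z_{T-t}$. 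As $Z_t^X\,d(X_t,Y_t)$ is $\mathcal F_t$-measurable, the tower property yields
\[
  \E_{(x,y)}\big[G_T^X\,d(X_t,Y_t)\big]
  =\frac{1}{\E_x Z_T}\,\E_{(x,y)}\big[Z_t^X\,h(X_t)\,d(X_t,Y_t)\big].
\]

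It then remains to control the factor $h(X_t)/\E_x Z_T$. Set $m:=\inf_{z\in E}h(z)$ and $M:=\sup_{z\in E}h(z)$. Applying~(B) to $Z_{T-t}$ at two arbitrary points of $E$ and bounding the distance by $\bar d$ gives $h(v)\le(1+C_B\bar d)\,h(u)$ for all $u,v\in E$, hence $M\le(1+C_B\bar d)\,m$. On the other hand, the Markov property of $X$ under $\P_x$ alone gives $\E_x Z_T=\E_x[Z_t\,h(X_t)]\ge m\,\E_x Z_t$ (which, together with $\E_x Z_T>0$, also forces $m>0$). Consequently $h(X_t)/\E_x Z_T\le M/(m\,\E_x Z_t)\le(1+C_B\bar d)/\E_x Z_t$, and substituting into the identity above, after recognizing $\E_{(x,y)}[Z_t^X d(X_t,Y_t)]/\E_x Z_t=\E_{(x,y)}[G_t^X d(X_t,Y_t)]$,
\[
  \E_{(x,y)}\big[G_T^X\,d(X_t,Y_t)\big]
  \le(1+C_B\bar d)\,\E_{(x,y)}\big[G_t^X\,d(X_t,Y_t)\big]
  \le(1+C_B\bar d)\,\gamma(t)\,d(x,y),
\]
the last inequality being~(A') applied to the very same Markovian coupling.

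The one point requiring care is the conditional identity $\E_{(x,y)}[\widetilde Z_{t,T}\mid\mathcal F_t]=h(X_t)$: it relies on the fact that, conditionally on the joint past up to time $t$, the future of the $X$-marginal is still distributed as under $\P_{X_t}$, which is exactly what the Markovian-coupling assumption guarantees, so that reusing the coupling of~(A') at time $t$ causes no loss. Apart from this, the argument is nothing more than the elementary reweighting computation above, and it invokes no ingredient beyond~(A') and~(B); the case $T=t$ is trivial since then $h\equiv 1$.
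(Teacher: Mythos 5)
Your proof is correct and follows essentially the same route as the paper's: decompose $Z_T^X$ via the Markov property into $Z_t^X$ times the future survival factor $\E_{X_t}Z_{T-t}$, then bound $\sup_z\E_z Z_{T-t}/\inf_z\E_z Z_{T-t}\le 1+C_B\bar d$ using~(B) and conclude by~(A'). The paper just writes the quotient bound in one line where you spell out the $m$, $M$ bookkeeping.
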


The second lemma shows that $P_t$ and $R_{s,t}^T$ preserve the set of Lipschitz functions.

\begin{lem}
  \label{lem3bis}
  Assume~(A) and~(B). For all $x,y\in E$, all $\phi\in\textnormal{Lip}(d)$ and all $0\leq s\leq t\leq T\in I$,
  \begin{equation}
    \label{eq:P_t-Lipschitz}
    \Big|\delta_x P_{t} \phi-\delta_y P_{t} \phi\Big|
    \leq \left(C_B\|\phi\|_\infty+\gamma(t)\|\phi\|_{\textnormal{Lip}(d)}\right)\, d(x,y) \, \E_x Z_t.
  \end{equation}
  and
  \begin{equation}
    \label{eq:R_t-Lipschitz}
    \Big|\delta_x R^T_{s,t} \phi-\delta_y R^T_{s,t} \phi\Big|
    \leq \left(2C_B\|\phi\|_\infty+(1+C_B \bar d)\gamma(t-s)\|\phi\|_{\textnormal{Lip}(d)}\right) d(x,y).
  \end{equation}
\end{lem}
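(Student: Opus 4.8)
The plan is to derive both inequalities from the same scheme: run the Markovian coupling $\P_{(x,y)}$ supplied by (A') — extended to penalizations up to a terminal time $T$ through Lemma~\ref{lem:A-strong} — and split the quantity to be estimated into a \emph{transport} contribution, where $\phi$ is compared at the two coupled positions, and a \emph{survival-weight discrepancy} contribution, where the penalization weights $Z^X$ and $Z^Y$ are compared. The transport part is controlled by the Lipschitz norm of $\phi$ together with (A')/Lemma~\ref{lem:A-strong}, and the discrepancy part by assumption (B).

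For \eqref{eq:P_t-Lipschitz} I would write $\delta_x P_t\phi-\delta_y P_t\phi=\E_{(x,y)}\big[Z_t^X(\phi(X_t)-\phi(Y_t))\big]+\E_{(x,y)}\big[(Z_t^X-Z_t^Y)\phi(Y_t)\big]$. The first term is at most $\|\phi\|_{\textnormal{Lip}(d)}\,\E_{(x,y)}[Z_t^X d(X_t,Y_t)]=\|\phi\|_{\textnormal{Lip}(d)}\,\E_xZ_t\,\E_{(x,y)}[G_t^X d(X_t,Y_t)]\le\gamma(t)\|\phi\|_{\textnormal{Lip}(d)}d(x,y)\,\E_xZ_t$ by (A'). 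For the second term, bound $|\phi(Y_t)|$ by $\|\phi\|_\infty$; it then remains to prove the \emph{discrepancy estimate} $\E_{(x,y)}[|Z_t^X-Z_t^Y|]\le C_B\,d(x,y)\,\E_xZ_t$, which makes this term at most $C_B\|\phi\|_\infty d(x,y)\E_xZ_t$ and gives \eqref{eq:P_t-Lipschitz}. To obtain the discrepancy estimate I would expand $Z_t^X-Z_t^Y$ along the coupled trajectory — telescopically, $Z_t^X-Z_t^Y=\sum_{k<t}Z_k^X(p(X_k)-p(Y_k))\prod_{k<j<t}p(Y_j)$ in discrete time, and by Duhamel, $Z_t^X-Z_t^Y=\int_0^t Z_u^X(\rho(Y_u)-\rho(X_u))e^{-\int_u^t\rho(Y_s)ds}\,du$ in continuous time — bound each increment by $\|\rho\|_{\textnormal{Lip}(d)}d(X_u,Y_u)$, re-anchor the intermediate weight $Z_u^X$ (together with the $Y$-tail factor) to the terminal weight $Z_t^X$ using Lemma~\ref{lem:A-strong} rather than the crude $\prod_{k<j<t}p(Y_j)\le1$, sum the resulting geometric series in $\gamma$, and use (B) to identify the constant.

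For \eqref{eq:R_t-Lipschitz}, by the time-homogeneous semigroup property one may take $s=0$, so $R^T_{0,t}\phi(x)=\E_{(x,y)}[\widehat G_T^X\phi(X_t)]$ with $\widehat G_T^X:=Z_T^X/\E_xZ_T$, and I would split again: $R^T_{0,t}\phi(x)-R^T_{0,t}\phi(y)=\E_{(x,y)}[\widehat G_T^X(\phi(X_t)-\phi(Y_t))]+\E_{(x,y)}[(\widehat G_T^X-\widehat G_T^Y)\phi(Y_t)]$. The first term is at most $\|\phi\|_{\textnormal{Lip}(d)}\,\E_{(x,y)}[Z_T^X d(X_t,Y_t)]/\E_xZ_T\le(1+C_B\bar d)\gamma(t)\|\phi\|_{\textnormal{Lip}(d)}d(x,y)$ by Lemma~\ref{lem:A-strong}. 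For the second term I would use the identity $\widehat G_T^X-\widehat G_T^Y=\dfrac{Z_T^X-Z_T^Y}{\E_xZ_T}+Z_T^Y\,\dfrac{\E_yZ_T-\E_xZ_T}{\E_xZ_T\,\E_yZ_T}$: the contribution of the second summand is at most $\dfrac{|\E_yZ_T-\E_xZ_T|}{\E_xZ_T}\|\phi\|_\infty\le C_B\|\phi\|_\infty d(x,y)$ by (B), and that of the first is $\le\|\phi\|_\infty\,\E_{(x,y)}[|Z_T^X-Z_T^Y|]/\E_xZ_T\le C_B\|\phi\|_\infty d(x,y)$ by the discrepancy estimate at time $T$; adding the two gives the coefficient $2C_B\|\phi\|_\infty$ and \eqref{eq:R_t-Lipschitz}.

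The hard part is exactly the discrepancy estimate $\E_{(x,y)}[|Z_t^X-Z_t^Y|]\lesssim d(x,y)\,\E_xZ_t$: one must simultaneously use that the differences $\rho(Y_s)-\rho(X_s)$ are small in $Z^X$-weighted average (through (A')/Lemma~\ref{lem:A-strong}) and that survival probabilities from nearby points stay comparable at all times (through (B)), and combine these without letting the weights attached to the intermediate times grow relative to $\E_xZ_t$; once this estimate is in hand, the two splittings and the bookkeeping are routine. (In discrete time there is an alternative route to \eqref{eq:P_t-Lipschitz}: prove it first for $t=1$, where $Z_1=p(X_0)$ is deterministic so that (B) applies directly to $|p(x)-p(y)|$ and no discrepancy estimate is needed, then propagate it through $\delta_xP_{t+1}\phi=\delta_xP_1(P_t\phi)$ using $\|P_t\phi\|_\infty\le\|\phi\|_\infty$; the continuous-time case uses a short-time estimate in place of the $t=1$ step.)
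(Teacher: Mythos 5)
Your overall scheme matches the paper's: the same splitting of $Z^X\phi(X)-Z^Y\phi(Y)$ into a transport term $Z^X(\phi(X)-\phi(Y))$ and a survival-discrepancy term $(Z^X-Z^Y)\phi(Y)$, the same use of (A')/Lemma~\ref{lem:A-strong} for the transport part, and the same \eqref{eq:calcul-fraction}-style bookkeeping for $R^T_{s,t}$. You also correctly recover the coefficients $2C_B\|\phi\|_\infty$ and $(1+C_B\bar d)\gamma(t-s)\|\phi\|_{\textnormal{Lip}(d)}$.

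The one place you genuinely diverge is the treatment of the discrepancy term, and here you have spotted a real subtlety, but handled it incorrectly. The paper's inequality~\eqref{eq:pf-lem3bis} bounds the discrepancy contribution by $\|\phi\|_\infty\,\bigl|\E_{(x,y)}(Z_{T-s}^X-Z_{T-s}^Y)\bigr|=\|\phi\|_\infty\,|\E_xZ_{T-s}-\E_yZ_{T-s}|$, which is exactly what (B) controls — no pathwise estimate on $|Z^X-Z^Y|$ is invoked anywhere in the paper's proof of this lemma. You are right that this step is delicate: since $\phi(Y_{t-s})$ is random and may be correlated with the sign of $Z^X-Z^Y$, the term $\E_{(x,y)}[(Z^X-Z^Y)\phi(Y_{t-s})]$ is \emph{not} in general dominated by $\|\phi\|_\infty|\E_{(x,y)}(Z^X-Z^Y)|$; one needs $\|\phi\|_\infty\,\E_{(x,y)}|Z^X-Z^Y|$, i.e.\ precisely the pathwise discrepancy estimate you call for. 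So your instinct to supply such an estimate is sound and arguably repairs a gap.

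However, the way you state it — ``$\E_{(x,y)}|Z_t^X-Z_t^Y|\le C_B\,d(x,y)\,\E_xZ_t$'' with \emph{the same} constant $C_B$ — is not justified from (A'), (B). What (B) gives is a bound on $|\E_xZ_t-\E_yZ_t|$; it says nothing about the pathwise fluctuation $\E_{(x,y)}|Z^X-Z^Y|$, which can be strictly larger. Your Duhamel/telescoping sketch would instead produce a constant of the rough form $\|\rho\|_{\textnormal{Lip}(d)}(1+C_B\bar d)^2\int_0^\infty\gamma(s)\,ds$, which is a different quantity, and which only makes sense when $\gamma$ is integrable — not guaranteed by (A'), which only requires $\gamma$ nonincreasing and $\gamma(t)\to 0$. (The pathwise bound that does hold, labelled (B') in the paper, is derived later in Section~\ref{sec:main-proof-1} from the full Assumption~(A), with a constant $C_B'=\frac{C_AC_H}{\gamma_A}(1+C_H)$, not $C_B$; invoking it here in the proof of Theorem~\ref{thm:main-under-ABC} under (A'), (B), (C) alone would require an independent derivation such as the one you sketch.) In short: your decomposition and transport estimate agree with the paper; your discrepancy estimate is the right kind of object to want, but asserting it with constant $C_B$ is a gap, and your sketched derivation yields neither that constant nor, without an extra integrability hypothesis on $\gamma$, a finite one.
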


For all $\kappa>0$, we define the distance $d_\kappa$ on $E$ as
\[
d_\kappa(x,y)=(\kappa d(x,y))\wedge 1,\quad\forall x,y\in E.
\]
The following lemma aim at proving a contraction property of $R_{s,t}^T$ for initial Dirac masses and for a well-chosen distance
$d_\kappa$.
\begin{lem}
  \label{lem4bis} 
  Assume~(A),~(B) and~(C). Setting $\beta=1-\frac{1}{2 C_C}$ and $\kappa=\frac{C_B}{\beta-1/2}$, there exists $t_0>0$ such that, for all $x,y\in E$,  for all
  $\phi\in\textnormal{Lip}(d_\kappa)$ and for all $0\leq s\leq t\leq T\in I$ such that $t-s\geq t_0$,
  \begin{align}
    \label{eq:lem4bis}
    \Big|\delta_x R^T_{s,t} \phi-\delta_y R^T_{s,t} \phi\Big|
    \leq \beta\,\|\phi\|_{\textnormal{Lip}(d_\kappa)} d_\kappa(x,y).
  \end{align}
\end{lem}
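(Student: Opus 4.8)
The plan is to prove Lemma~\ref{lem4bis} by a Doeblin-type coupling argument at the level of the time-inhomogeneous semigroup $R^T_{s,t}$, using the ingredients already assembled: the coupling estimate from Assumption~(A') strengthened via Lemma~\ref{lem:A-strong}, the lower bound on the ``common mass'' $G^X\wedge G^Y$ from Assumption~(C), and the Harnack-type comparison from Assumption~(B). The key point is that a contraction in the \emph{bounded} distance $d_\kappa$ can be obtained even though the contraction in $d$ from Lemma~\ref{lem3bis} only holds with a factor $\gamma(t)$ that must be beaten against a constant overhead $C_B\|\phi\|_\infty$; rescaling $d$ to $d_\kappa$ with $\kappa$ chosen as in the statement is exactly what balances these two contributions.

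Concretely, fix $x,y$ with $d_\kappa(x,y)<1$, i.e.\ $\kappa d(x,y)<1$, a function $\phi\in\mathrm{Lip}_1(d_\kappa)$ (so $\|\phi\|_\infty\le 1$, wlog by subtracting a constant we can take $0\le\phi\le 1$ or center it, and $\|\phi\|_{\mathrm{Lip}(d)}\le\kappa$), and $s\le t\le T$ with $t-s\ge t_1$. I would write, using the semigroup/coupling representation,
\[
  \delta_x R^T_{s,t}\phi - \delta_y R^T_{s,t}\phi
  = \E_{(x,y)}\!\left[G^X_{T-s}\phi(X_{t-s})\right] - \E_{(x,y)}\!\left[G^Y_{T-s}\phi(Y_{t-s})\right],
\]
and split the right-hand side into a ``common part'' involving $G^X_{T-s}\wedge G^Y_{T-s}$ (the overlap, which contracts because on that event we pay $\phi(X_{t-s})-\phi(Y_{t-s})$ controlled by $d_\kappa(X_{t-s},Y_{t-s})\le \kappa\, d(X_{t-s},Y_{t-s})$ and then Lemma~\ref{lem:A-strong}) and a ``defect part'' of total mass $1-\E_{(x,y)}[G^X_{T-s}\wedge G^Y_{T-s}]\le 1-1/C_C$ by Assumption~(C). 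On the common part one gains a factor bounded by $\kappa(1+C_B\bar d)\gamma(t-s)\,d(x,y)$; on the defect part one only has the trivial bound $\|\phi\|_\infty(1-1/C_C)\le 1-1/C_C$, but since $d_\kappa(x,y)<1$ I would instead compare against $d_\kappa(x,y)=\kappa d(x,y)$ by using that on the defect part the increments are still $O(1)$ while one is trying to prove a bound proportional to $d_\kappa(x,y)$ — this forces the argument to first establish the bound $\beta d_\kappa(x,y)$ in the regime where $d(x,y)$ is not too small and to handle small $d(x,y)$ by the Lipschitz estimate of Lemma~\ref{lem3bis} alone, then take $t_1$ large enough that $\kappa(1+C_B\bar d)\gamma(t_1)\le \beta-1/2$ and combine.

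The bookkeeping I expect to do is: bound $|\delta_x R^T_{s,t}\phi-\delta_y R^T_{s,t}\phi|$ by (overlap contribution) $+$ (defect contribution), show the overlap contribution is at most $\big(\tfrac12 + \text{small}\big)\,d_\kappa(x,y)$ after using $t-s\ge t_1$ and the definition $\kappa=C_B/(\beta-1/2)$, and show the defect contribution is at most $(1-1/C_C)\cdot\big(\text{something}\le d_\kappa(x,y)\big)=(1-1/(2C_C)-\tfrac12+\dots)$; recalling $\beta=1-\tfrac{1}{2C_C}$ so that $1-1/C_C = 2\beta-1$, the two pieces should add to at most $\beta\, d_\kappa(x,y)\,\|\phi\|_{\mathrm{Lip}(d_\kappa)}$. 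I would also need the already-proved Harnack inequality (H), equivalently Assumption~(B), to pass between $\E_x Z_{T-s}$ and $\E_y Z_{T-s}$ when comparing $G^X$ and $G^Y$ — this is where the factor $C_B\bar d$ and the constant $2C_B$ (rather than $C_B$) in \eqref{eq:R_t-Lipschitz} come from. Choosing $t_1$ at the very end, depending only on $\gamma(\cdot)$, $C_B$, $\bar d$ and $C_C$ (through $\beta$ and $\kappa$), closes the argument.

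The main obstacle, and the step I would spend the most care on, is the treatment of the defect part: one cannot simply bound it by $(1-1/C_C)\|\phi\|_\infty$ because that is $O(1)$, not $O(d_\kappa(x,y))$, so the naive split does not give a contraction factor strictly below $1$ times $d_\kappa(x,y)$. The resolution is the standard trick of recentering $\phi$ so that $\delta_x R^T_{s,t}\phi=0$ at one of the two points (or subtracting the value of $\phi$ at a well-chosen reference point along the trajectory), which replaces $\|\phi\|_\infty$ by an oscillation that is itself controlled by $d_\kappa$ of the relevant points; getting this recentering to interact correctly with the non-conservative weights $G^X,G^Y$ (whose expectations are $1$, which is what makes the recentering legitimate) is the delicate point. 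Once that is arranged, the inequality $1-1/C_C=2\beta-1$ together with $\kappa(1+C_B\bar d)\gamma(t_1)\le\beta-1/2$ yields \eqref{eq:lem4bis} by a one-line addition.
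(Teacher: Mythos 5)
Your plan rests on a Doeblin-type split into an overlap part (mass $\E_{(x,y)}[G^X\wedge G^Y]$) and a defect part (mass at most $1-1/C_C$), and you correctly flag the fatal difficulty yourself: the defect contribution is controlled only by $\|\phi\|_\infty(1-1/C_C)$, which is a \emph{constant}, not something of order $d_\kappa(x,y)$. The recentering you invoke does not fix this: recentering can only bring $\|\phi\|_\infty$ down to $\tfrac12\operatorname{osc}(\phi)\leq\tfrac12\|\phi\|_{\mathrm{Lip}(d_\kappa)}$ (since $d_\kappa\leq 1$), which is still $O(1)$, not $O(d_\kappa(x,y))$. So the defect term cannot be beaten against $\beta\,d_\kappa(x,y)$ as $d_\kappa(x,y)\to 0$, and the identity $1-1/C_C=2\beta-1$ that you want to exploit never becomes available in this regime. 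In fact, Assumption~(C) plays no role at all in Lemma~\ref{lem4bis}; the Doeblin split you describe is exactly the mechanism of Lemma~\ref{lem5bis}, where the target bound is the constant $\beta\,\|\phi\|_{\mathrm{Lip}(d_\kappa)}$ (with $d_\kappa(x,y)=1$) rather than something proportional to a small $d_\kappa(x,y)$.

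The paper's proof of Lemma~\ref{lem4bis} is much more direct and is the move you hint at in passing (``handle small $d(x,y)$ by the Lipschitz estimate of Lemma~\ref{lem3bis} alone''): since $d_\kappa(x,y)<1$, one has $d(x,y)=d_\kappa(x,y)/\kappa$ exactly, and after recentering so that $\|\phi\|_\infty\leq\tfrac12$ and using $\|\phi\|_{\mathrm{Lip}(d)}\leq\kappa\|\phi\|_{\mathrm{Lip}(d_\kappa)}$, inequality~\eqref{eq:R_t-Lipschitz} of Lemma~\ref{lem3bis} gives
$\bigl|\delta_x R^T_{s,t}\phi-\delta_y R^T_{s,t}\phi\bigr|\leq\bigl(\tfrac{C_B}{\kappa}+(1+C_B\bar d)\gamma(t-s)\bigr)d_\kappa(x,y)$. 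The choice $\kappa=C_B/(\beta-\tfrac12)$ makes the first summand equal to $\beta-\tfrac12$, and $t_1$ is chosen so that $(1+C_B\bar d)\gamma(t-s)\leq\tfrac12$ for $t-s\geq t_1$; this already yields $\beta\,d_\kappa(x,y)$, with no use of $(C)$ and no overlap/defect decomposition. Your instinct to invoke the Doeblin mechanism is not wrong for the overall program, but it belongs to the complementary regime $d_\kappa(x,y)=1$; importing it into Lemma~\ref{lem4bis} introduces a term you cannot control.
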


%\begin{lem}
%  \label{lem5bis}
%   There exists $t_2>0$ such that, for all $x,y\in E$ such that $d_\kappa(x,y)=1$, for all
%  $\phi\in\textnormal{Lip}(d_\kappa)$ (where $\beta$ and $\kappa$ are defined in Lemma~\ref{lem4bis}) and for all $0\leq s\leq t\leq T\in I$
%  such that $t-s\geq t_2$,
%  \begin{align*}
%    \Big|\delta_x R^T_{s,t} \phi-\delta_y R^T_{s,t} \phi\Big|
%    \leq \beta\|\phi\|_{\textnormal{Lip}(d_\kappa)}.
%  \end{align*}
%\end{lem}

The last lemma will be useful to extend the results of Lemma~\ref{lem4bis}  to any initial distribution.

\begin{lem}
  \label{lem:arbitrary-IC}
  Condition (B) implies that, for all $\kappa\geq 1$, all probability measures $\mu$ and $\nu$ on $E$ and all $t\in I$,
  \[
  |\E_\mu Z_t-\E_\nu Z_t|\leq C_B(1+\bar{d}C_B)(\kappa^{-1}\vee\bar{d})\,\mathcal{W}_{d_\kappa}(\mu,\nu)\,\E_\nu Z_t.
  \]
\end{lem}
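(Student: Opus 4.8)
The plan is to prove that the survival function $h_t(x):=\E_x Z_t$ is Lipschitz for the truncated distance $d_\kappa$, with an explicit bound on its Lipschitz norm featuring $\E_\nu Z_t$, and then to invoke the Kantorovich--Rubinstein duality recalled in Section~\ref{sec:results}. I would first record two trivial facts: $0\le h_t\le 1$ on $E$ (since $Z_t\le1$), and $\E_\mu Z_t=\mu(h_t):=\int_E h_t\,d\mu$ for every probability measure $\mu$, so that the left-hand side of the claimed inequality is exactly $|\mu(h_t)-\nu(h_t)|$.

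The first ingredient is the elementary pointwise comparison $d(x,y)\le(\kappa^{-1}\vee\bar d)\,d_\kappa(x,y)$, valid for all $x,y\in E$: if $\kappa d(x,y)\le1$ then $d_\kappa(x,y)=\kappa d(x,y)$ and $d(x,y)=\kappa^{-1}d_\kappa(x,y)$, while if $\kappa d(x,y)>1$ then $d_\kappa(x,y)=1$ and $d(x,y)\le\bar d=\bar d\,d_\kappa(x,y)$. The second ingredient comes from Assumption~(B), namely $|h_t(x)-h_t(y)|\le C_B\,d(x,y)\,h_t(x)$: rewriting it as $h_t(y)\le(1+C_Bd(x,y))h_t(x)\le(1+C_B\bar d)h_t(x)$ and taking the infimum over $x$ gives the oscillation bound $\sup_{E}h_t\le(1+C_B\bar d)\inf_{E}h_t$; since an average dominates the infimum, this yields in particular $\sup_{E}h_t\le(1+C_B\bar d)\,\nu(h_t)=(1+C_B\bar d)\,\E_\nu Z_t$.

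Combining the two ingredients, for all $x\neq y$ in $E$,
\[
  \frac{|h_t(x)-h_t(y)|}{d_\kappa(x,y)}\le C_B(\kappa^{-1}\vee\bar d)\,h_t(x)\le C_B(1+C_B\bar d)(\kappa^{-1}\vee\bar d)\,\E_\nu Z_t,
\]
so that $h_t\in\textnormal{Lip}(d_\kappa)$ with $\|h_t\|_{\textnormal{Lip}(d_\kappa)}\le C_B(1+C_B\bar d)(\kappa^{-1}\vee\bar d)\,\E_\nu Z_t$. The Kantorovich--Rubinstein duality for the distance $d_\kappa$, applied to $h_t$, then gives $|\E_\mu Z_t-\E_\nu Z_t|=|\mu(h_t)-\nu(h_t)|\le\|h_t\|_{\textnormal{Lip}(d_\kappa)}\,\mathcal{W}_{d_\kappa}(\mu,\nu)$, and combining this with the previous display yields the claim. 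Equivalently, one could integrate the pointwise estimate $|h_t(x)-h_t(y)|\le C_B(1+C_B\bar d)(\kappa^{-1}\vee\bar d)\,\E_\nu Z_t\,d_\kappa(x,y)$ against an optimal coupling of $\mu$ and $\nu$ for $\mathcal{W}_{d_\kappa}$.

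I do not anticipate a genuine obstacle. The only point that requires care is that~(B) is not itself a uniform Lipschitz estimate, because the factor $h_t(x)$ on its right-hand side is not constant; the role of the oscillation inequality $\sup_{E}h_t\le(1+C_B\bar d)\inf_{E}h_t$ is precisely to upgrade~(B) into one, and the factor $\E_\nu Z_t$ rather than $\sup_{E}h_t$ on the right-hand side of the claim is then recovered for free from $\inf_{E}h_t\le\nu(h_t)$. The assumption $\kappa\ge1$ is not actually used in this argument.
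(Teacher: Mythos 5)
Your proposal is correct and takes essentially the same route as the paper: the paper derives the inequality by integrating the pointwise bound $\E_x Z_t \leq \E_y Z_t + C_B(\kappa^{-1}\vee\bar d)\,d_\kappa(x,y)\,\E_y Z_t$ (obtained exactly as you do from (B) and the comparison $d \leq (\kappa^{-1}\vee\bar d)\,d_\kappa$) against an optimal $d_\kappa$-coupling of $\mu$ and $\nu$, then applies the oscillation bound~\eqref{eq:interm-1}, while you phrase the same argument as a $d_\kappa$-Lipschitz bound on $h_t$ followed by Kantorovich--Rubinstein duality, which is the dual formulation of the same step.
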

 We now prove the four lemmas.
\begin{proof}[Proof of Lemma~\ref{lem:A-strong}]
  Assumption~(B) implies that, for all $x,y\in E$ and $t\in I$, $\E_y Z_t\leq (1+C_B\bar d)\E_x Z_t$, hence
  \begin{equation}
    \label{eq:interm-1}
    \sup_{y\in E}\E_y Z_t\leq (1+C_B\bar d)\inf_{y\in E}\E_y Z_t.    
  \end{equation}
  Then, for all $t\leq T\in I$ and $x,y\in E$, it follows from Markov's property that
  \begin{align*}
    \E_{(x,y)}\left[G_T^Xd(X_t,Y_t)\right] & =\frac{\E_{(x,y)}\left[Z_t^Xd(X_t,Y_t)\E_{X_t} Z_{T-t}\right]}{\E_x\left[ Z_t\E_{X_t} Z_{T-t}\right]}
    \\ & \leq\frac{\sup_{y\in E}\E_y Z_{T-t}}{\inf_{y\in E}\E_y Z_{T-t}}\,\E_{(x,y)}\left[G_t^Xd(X_t,Y_t)\right].
  \end{align*}
  The result follows from (A) and~\eqref{eq:interm-1}.
\end{proof}

\begin{proof}[Proof of Lemma~\ref{lem3bis}]
  % Let us assume without loss of generality that $\E_{x} Z_{t-s} \geq \E_{y} Z_{t-s} $.
  Let us first notice that
  \begin{multline}
    |\E_{x} Z_{T-s}\phi(X_{t-s})-\E_{y} Z_{T-s}\phi(X_{t-s})| =|\E_{(x,y)} [Z^X_{T-s}\phi(X_{t-s})- Z^Y_{T-s}\phi(Y_{t-s})]| \\
    \leq \|\phi\|_\infty \E_{(x,y)} |Z_{T-s}^X- Z_{T-s}^Y|+\|\phi\|_{\text{Lip}(d)}\E_{(x,y)} Z_{T-s}^X d(X_{t-s},Y_{t-s}).
    \label{eq:pf-lem3bis}
  \end{multline}
  Assuming $s=0$ and $t=T$ and using (A) and (B),~\eqref{eq:P_t-Lipschitz} follows.

  Let us now prove~\eqref{eq:R_t-Lipschitz}. We use the fact that, for all $A,C\in\R$ and all $B,D\in\R\setminus\{0\}$,
  \begin{align}
    \label{eq:calcul-fraction}
    \left|\frac{A}{B}-\frac{C}{D}\right|\leq \frac{|A-C|}{|B|}+\frac{|C|}{|D|}\frac{|D-B|}{|B|}.
  \end{align}
  This entails
  \begin{align*}
    \Big|\delta_x R^T_{s,t} \phi-\delta_y R^T_{s,t} \phi\Big| & \leq \frac{|\E_{x} Z_{T-s}\phi(X_{t-s})-\E_{y} Z_{T-s}\phi(X_{t-s})|}{\E_{x}
      Z_{T-s}}+\|\phi\|_\infty \frac{|\E_{x} Z_{T-s}-\E_{y} Z_{T-s}|}{\E_{x} Z_{T-s}}.
  \end{align*}
  % Now,
  % \begin{multline*}
  %   |\E_{x} Z_{T-s}\phi(X_{t-s})-\E_{y} Z_{T-s}\phi(X_{t-s})| =|\E_{(x,y)} Z^X_{T-s}\phi(X_{t-s})- Z^Y_{T-s}\phi(Y_{t-s})| \\
  %   \leq \|\phi\|_\infty |\E_{(x,y)} (Z_{T-s}^X- Z_{T-s}^Y)|+\|\phi\|_{\text{Lip}(d)}\E_{(x,y)} Z_{T-s}^X d(X_{t-s},Y_{t-s}).
  % \end{multline*}
  Using~\eqref{eq:pf-lem3bis}, Lemma~\ref{lem:A-strong} and (B), we obtain
  \begin{align*}
    \Big|\delta_x R^T_{s,t} \phi-\delta_y R^T_{s,t} \phi\Big| & \leq 2\|\phi\|_\infty \frac{\E_{(x,y)} |Z_{T-s}^X- Z_{T-s}^Y|}{\E_{x}
      Z_{T-s}}+\|\phi\|_{\text{Lip}(d)}\frac{\E_{(x,y)} Z_{T-s}^X d(X_{t-s},Y_{t-s})}{\E_{x}
      Z_{T-s}} \\ & \leq 2\|\phi\|_\infty C_B d(x,y)+\|\phi\|_{\text{Lip}(d)} (1+C_B\bar d)\gamma(t-s)d(x,y). \qedhere
  \end{align*}
\end{proof}

\begin{proof}[Proof of Lemma~\ref{lem4bis}]
  First observe that $\beta\in(1/2,1)$ so that $\kappa$ is well defined. We can and do assume without loss of generality that $\phi\in\text{Lip}_1(d_\kappa)$. Note that, since $d_\kappa$ is bounded by
  $1$, for all $\phi\in\text{Lip}_1(d_\kappa)$, $\text{osc}(\phi)\leq 1$, so that there exists a constant
  $c$ such that $\|\phi-c\|_\infty\leq\nicefrac12$. Therefore, it is sufficient to prove~\eqref{eq:lem4bis} assuming that
  $\|\phi\|_{\text{Lip}(d_\kappa)}\leq 1$ and $\|\phi\|_\infty\leq\nicefrac12$.

  We start by proving~\eqref{eq:lem4bis} for all $x,y\in E$ such that $d_\kappa(x,y)<1$. Notice  that, since $d_\kappa\leq \kappa d$,
  $\|\phi\|_{\text{Lip}(d)}\leq\kappa\|\phi\|_{\text{Lip}(d_\kappa)}$. Then, for all $x,y\in E$ such that $d_\kappa(x,y)<1$,
  we have $d(x,y)=d_\kappa(x,y)/\kappa$, so it follows from Lemma~\ref{lem3bis} that
  \begin{align}
    \Big|\delta_x R^T_{s,t} \phi-\delta_y R^T_{s,t} \phi\Big| & \leq\left(
      C_B+\gamma(t-s)(1+C_B\bar d)\kappa\|\phi\|_{\text{Lip}(d_\kappa)}\right) d(x,y) \notag \\ &
    =\left(\frac{C_B}{\kappa}+\gamma(t-s)(1+C_B\bar d)\right)d_\kappa(x,y). \label{eq:lem4}
  \end{align}
  Hence, defining $t_1$ as the smallest element of $I$ such that $\gamma(t)\leq 1/(2(1+C_B\bar d))$ for all $t\geq t_1$, we have proved that~\eqref{eq:lem4bis} holds true whenever $d_\kappa(x,y)<1$.%  As above, we can and do assume without loss of generality that $\phi\in\text{Lip}_1(d_\kappa)$ and $\|\phi\|_\infty\leq 1/2$. 
 We now assume that $x$ and $y$ are such that $d_\kappa(x,y)=1$.  We have
  \begin{align*}
    \Big|\delta_x R^T_{s,t} \phi-\delta_y R^T_{s,t} \phi\Big| & =\left|\E_{(x,y)}\left(G_{T-s}^X \phi(X_{t-s})-G_{T-s}^Y
        \phi(Y_{t-s})\right)\right|\\ & \leq \|\phi\|_{\text{Lip}(d_\kappa)}\E_{(x,y)}\left(G_{T-s}^X
                                        d_\kappa(X_{t-s},Y_{t-s})\right)+\|\phi\|_\infty\E_{(x,y)}\left|G_{T-s}^X-G_{T-s}^Y\right|
    \\ & \leq \kappa\E_{(x,y)}\left(G_{T-s}^X
                                        d(X_{t-s},Y_{t-s})\right)+\frac{1}{2}\E_{(x,y)}\left|G_{T-s}^X-G_{T-s}^Y\right|,
  \end{align*}
  since $d_\kappa\leq\kappa d$. Now,
  \begin{align*}
    \E_{(x,y)}\left|G_{T-s}^X-G_{T-s}^Y\right| & =\E_{(x,y)}\left[G_{T-s}^X-G_{T-s}^X\wedge
      G_{T-s}^Y\right]+\E_{(x,y)}\left[G_{T-s}^Y-G_{T-s}^X\wedge G_{T-s}^Y\right] \\ & \leq
    2\left[1-\E_{(x,y)}\left(G_{T-s}^X\wedge G_{T-s}^Y\right)\right].
  \end{align*}
  Therefore, using Lemma~\ref{lem:A-strong} for the first term and %on the right and side is bounded by $\gamma(t-s) d(x,y)\leq \bar{d}\gamma(t-s)$,
                                %while
  Assumption~(C) for the second term, we obtain
  \begin{equation}
    \label{eq:lem5}
    \Big|\delta_x R^T_{s,t} \phi-\delta_y R^T_{s,t} \phi\Big|\leq \kappa(1+C_B\bar d)\bar{d}\gamma(t-s)+1-\frac{1}{C_C}.    
  \end{equation}
  Hence, defining $t_2$ as the first element of $I$ such that $\gamma(t)\leq 1/(2\kappa(1+C_B\bar d)\bar{d}C_C)$ for all $t\geq t_2$, and setting $t_0 = t_1 \vee t_2$
  ends  the proof of Lemma~\ref{lem4bis}.
\end{proof}

\begin{proof}[Proof of Lemma~\ref{lem:arbitrary-IC}]
  % Note first that, since $d$ is bounded by $\bar{d}$, $|\E_x Z_t-\E_y Z_t|\leq \bar{d}C_B\E_x Z_t$, which entails that, for all
  % probability measures $\mu$ and $\nu$ on $E$,
  % \begin{equation}
  %   \label{eq:interm-1}
  %   \E_\mu Z_t\leq(1+\bar{d}C_B)\E_\nu Z_t.
  % \end{equation}
  Note % also
  that $d$ and $d_\kappa$ are equivalent distances, since $d_\kappa\leq \kappa d$ and
  \[
  d(x,y)\leq
  \begin{cases}
    \frac{1}{\kappa}\,d_\kappa(x,y) & \text{if }d_\kappa(x,y)<1, \\
    \bar{d}\,d_\kappa(x,y) & \text{if }d_\kappa(x,y)=1.
  \end{cases}
  \]
  Hence (B) is satisfied with $d$ replaced by $d_\kappa$ and the constant $C_B$ replaced by $C_B (\kappa^{-1}\vee\bar{d})$. In
  particular, for all $x,y\in E$ and $t\in I$,
  \[
    \E_x Z_t\leq \E_y Z_t+C_B(\kappa^{-1}\vee\bar d)d_\kappa(x,y)\E_y Z_t.
  \]
  Integrating this inequality with respect to $\pi$, an optimal coupling of $\mu$ and $\nu$ for the distance $d_\kappa$,
  we obtain
  \begin{align*}
    \E_\mu Z_t & \leq\E_\nu Z_t+C_B (\kappa^{-1}\vee\bar{d})\E_\pi\left[d_\kappa(X_0,Y_0)\E_{Y_0}Z_t\right] \\
     & \leq\E_\nu Z_t+C_B(1+\bar{d}C_B)(\kappa^{-1}\vee\bar{d}) \,(\E_\nu Z_t\wedge\E_\mu Z_t)\,\mathcal{W}_{d_\kappa}(\mu,\nu),
  \end{align*}
  where we used~\eqref{eq:interm-1} % with $\mu=\delta_{Y_0}$
  in the second inequality. Hence Lemma~\ref{lem:arbitrary-IC} is proved.
\end{proof}

We are now ready to prove Theorem~\ref{thm:main-under-ABC}.

\begin{proof}[Proof of Theorem~\ref{thm:main-under-ABC}] The proof is divided into three steps.

  \medskip
  \noindent\emph{Step 1. Proof of~\eqref{eq:main-compact} with Dirac initial distributions.}

  \noindent Recall the Kantorovich-Rubinstein formula: for all probability measures $\mu$ and $\nu$ on $E$,
  \[
  \mathcal{W}_{d_\kappa}(\mu,\nu)=\sup_{\phi\in\text{Lip}_1(d_\kappa)}\left|\int_E \phi(x)\mu(dx)-\int_E\phi(y)\nu(dy)\right|.
  \]
%  Set $t_0=t_1\vee t_2$. % As noticed in the proof of Lemma~\ref{lem4bis}, we can assume without loss of generality that
  % $\|\phi\|_\infty\leq\nicefrac12$. 
  It then follows from Lemmas~\ref{lem4bis}  that, for all $\phi\in\text{Lip}_1(d_\kappa)$, all $x,y\in E$ and all
  $T\in I$ and $k\in\mathbb{Z}_+$ such that $(k+1)t_0\leq T$,
  \begin{equation}
    \label{eq:pf-main-thm-2}
    \left|R_{k t_0,(k+1)t_0}^T \phi(x)-R_{k t_0,(k+1)t_0}^T \phi(y)\right|\leq \beta d_\kappa(x,y).
  \end{equation}
  % where we used that $\beta=1-\frac{1}{2 C_C}\geq\nicefrac12$, since $C_C\leq 1$. % Hence
  % \[
  %   \mathcal{W}_{d_K}(\delta_x Q_{k t_0,(k+1)t_0}^T,\delta_y Q_{k t_0,(k+1)t_0}^T)\leq \beta d_K(x,y).
  % \]
  Taking the supremum over $\phi\in\text{Lip}_1(d_\kappa)$, this entails
  \[
    \mathcal{W}_{d_\kappa}(\delta_x R^T_{kt_0,(k+1)t_0}, \delta_y R^T_{kt_0,(k+1)t_0})\leq \beta d_\kappa(x,y).
  \]
  Assuming $k\geq 1$ and integrating~\eqref{eq:pf-main-thm-2} with respect to $\pi(dx,dy)$, an optimal coupling between $\delta_x
  R_{(k-1)t_0,kt_0}^T$ and $\delta_y R_{(k-1)t_0,kt_0}^T$, we obtain
  \begin{align*}
    \left|R_{(k-1) t_0,(k+1)t_0}^T \phi(x)-R_{(k-1) t_0,(k+1)t_0}^T \phi(y)\right| & \leq \beta \iint_{E^2} d_\kappa(x,y)\pi(dx,dy)
    \\ & =\beta\mathcal{W}_{d_\kappa}\left(\delta_x
    R_{(k-1)t_0,kt_0}^T,\delta_y R_{(k-1)t_0,kt_0}^T\right)
  \end{align*}
  Taking again the supremum over $\phi\in\text{Lip}_1(d_\kappa)$ and proceeding by (decreasing) induction over $k$, we deduce that, for all
  $k\in\Z_+$ such that $kt_0\leq T$,
  \[
  \mathcal{W}_{d_\kappa}(\delta_x R_{0,kt_0}^T,\delta_y R_{0,kt_0}^T)\leq \beta^k d_\kappa(x,y).
  \]
  Now, we deduce from the inequalities~\eqref{eq:lem4} and~\eqref{eq:lem5} obtained in the proof of Lemma~\ref{lem4bis}, that, setting
  % $C_0=\left(\frac{1}{4}+\gamma(0)\right)\vee\left(\bar{d}\gamma(0)+1-\frac{1}{C_C}\right)$
  $C_0=\left(\frac{1}{2}(1-\frac{1}{C_C})+(1+C_B\bar d)\gamma(0)\right)\vee\left(1-\frac{1}{C_C}+\frac{2 C_B\bar d}{1-1/C_C}\gamma(0)\right)$, we have for all
  $\phi\in\text{Lip}_1(d_\kappa)$, all $x,y\in E$ and all $s< t\leq T$ such that $t-s\leq t_0$,
  \[
  \left|\delta_xR_{s,t}^T \phi-\delta_yR_{s,t}^T \phi\right|\leq C_0 d_\kappa(x,y),
  \]
  so that $R_{s,t}^T\phi\in\text{Lip}(d_\kappa)$ with $\|R_{s,t}^T\phi\|_{\text{Lip}(d_\kappa)}\leq C_0$.
  This and the previous inequality entails that, for all $\phi\in\text{Lip}_1(d_\kappa)$ and all $t\leq T\in I$, setting $k$ as the
  integer part of $t/t_0$,
  \begin{align*}
    \left|\delta_x R_{0,t}^T\phi-\delta_y R_{0,t}^T\phi\right| =\left|\delta_x R_{0,kt_0}^TR_{kt_0,t}^T\phi-\delta_y
      R_{0,kt_0}^TR_{kt_0,t}^T\phi\right|  & \leq C_0\mathcal{W}_{d_\kappa}(\delta_x R_{0,kt_0}^T,\delta_y R_{0,kt_0}^T) \\ & \leq C_0\beta^k d_\kappa(x,y).
  \end{align*}
  Hence, setting $\alpha=\frac{\log (1/\beta)}{t_0}$, we deduce that, for all $t\leq T$ and $x,y\in E$,
  \begin{align}
    \label{eq:QE}
    \mathcal{W}_{d_\kappa}(\delta_x R_{0,t}^T,\delta_y R_{0,t}^T)\leq C_0\,e^{\log\beta\left(\frac{t}{t_0}-1\right)} d_\kappa(x,y)=\frac{C_0}{\beta}
    e^{-\alpha t} d_\kappa(x,y).
  \end{align}
  Note that, since $\gamma(t)=C_A e^{-\gamma_A t}$, we obtain~\eqref{eq:alpha-explicite} from the definitions of $t_1$ and $t_2$ in the proof of
  Lemma~\ref{lem4bis} noticing that
  $e^{\gamma_A t_0}=2C_A(1+C_B\bar d)\left[1\vee\frac{2 C_B C_C\bar d}{1-1/C_C}\right]$.
  
  \medskip
  \noindent\emph{Step 2. Extension of~\eqref{eq:main-compact} to arbitrary initial distributions.}

  \noindent Let $\mu$ and $\nu$ be two probability measures on $E$. For all $t\in I$, we define
  \begin{equation}
  \label{eq:defmu't}
  \mu'_t(f)=\frac{\E_\mu[Z_t f(X_0)]}{\E_\mu Z_t}\quad\text{and}\quad \nu'_t(f)=\frac{\E_\nu[Z_t f(X_0)]}{\E_\nu Z_t}.
  \end{equation}
  Then, for all $t\leq T\in I$,
  \begin{equation}
    \label{eq:pf-main-thm-3}
    \mu'_T R_{0,t}^T f=\frac{\int_E R_{0,t}^T f(x)\,\E_x Z_T\,\mu(dx)}{\E_\mu Z_T} = \frac{\int_E \E_{x}[f(X_t)Z_T]\,\mu(dx)}{\E_\mu
      Z_T}=\frac{\E_\mu[f(X_t)Z_T]}{\E_\mu Z_T}
  \end{equation}
  and similarly for $\nu'_T R_{0,t}^T$.
  % It is easy to check that
  % \[
  % \mu'_t R_{0,t}^t f =\P_\mu[X_t\in\cdot \mid t<\tau_\d]\quad\text{and} \quad \nu'_t Q_{0,t}^t f =\P_\nu[X_t\in\cdot \mid t<\tau_\d].
  % \]

  Let $\pi'$ be an optimal coupling of the measures $\mu'_T$ and $\nu'_T$. We deduce from~\eqref{eq:QE} that, for all
  $\phi\in\text{Lip}_1(d_\kappa)$,
  \begin{align*}
    \left|\mu'_T R^T_{0,t}\phi-\nu'_T R^T_{0,t}\phi\right|
    & \leq\iint_{E^2}\left|\delta_x R^T_{0,t}\phi-\delta_y R^T_{0,t}\phi\right|\pi'(dx,dy) \\
    & \leq\iint_{E^2}\mathcal{W}_{d_\kappa}(\delta_x R^T_{0,t},\delta_y R^T_{0,t})\pi'(dx,dy) \leq
                                \frac{C_0}{\beta}\,e^{-\alpha t}\,\mathcal{W}_{d_\kappa}(\mu'_T,\nu'_T).
  \end{align*}
  Hence, we deduce from~\eqref{eq:pf-main-thm-3} that
  \[
  \mathcal{W}_{d_\kappa}\left(\frac{\mu P_t(\cdot P_{T-t}\mathbbm{1})}{\mu P_T\mathbbm{1}},\frac{\nu P_t(\cdot P_{T-1}\mathbbm{1})}{\nu
      P_T\mathbbm{1}}\right)
  \leq \frac{C_0}{\beta}\, e^{-\alpha t}\,\mathcal{W}_{d_\kappa}(\mu'_T,\nu'_T).
  \]
  Now, for all $\phi\in\text{Lip}_1(d_\kappa)$, assuming without loss of generality that $\|\phi\|_\infty\leq 1/2$ and denoting by $\pi$ an
  optimal coupling of $\mu$ and $\nu$, we have
  \begin{align*}
    \left|\mu'_T(\phi)\right.
    & \left.-\nu'_T(\phi)\right| \leq\iint_{E^2}\pi(dx,dy)\left|\frac{\E_x Z_T}{\E_\mu Z_T}\phi(x)-\frac{\E_y Z_T}{\E_\nu Z_T}\phi(y)\right| \\
    & \leq\iint_{E^2}\pi(dx,dy)\, |\phi(x)-\phi(y)|\, \frac{\E_x Z_T}{\E_\mu
      Z_T}+\iint_{E^2}\pi(dx,dy)\,|\phi(y)|\,\left|\frac{\E_x Z_T}{\E_\mu Z_T}-\frac{\E_y Z_T}{\E_\nu Z_T}\right| \\
      & \leq\iint_{E^2}\pi(dx,dy)d_\kappa(x,y)\frac{\E_x Z_T}{\E_\mu Z_T} \\ & \qquad
    +\frac{1}{2}\iint_{E^2}\pi(dx,dy)\left(\frac{|\E_x Z_T-\E_y Z_T|}{\E_\mu Z_T}+\frac{\E_y Z_T}{\E_\mu Z_T\E_\nu Z_T}|\E_\nu 
      Z_T-\E_\mu Z_T|\right),
  \end{align*}
  where we used~\eqref{eq:calcul-fraction} in the last inequality. We then use Lemma~\ref{lem:arbitrary-IC},
  Inequality~\eqref{eq:interm-1} and the fact that (B) is satisfied with $d$ replaced by $d_\kappa$ and $C_B$ replaced by
  $C_B(\kappa^{-1}\vee\bar{d})$ (see the proof of Lemma~\ref{lem:arbitrary-IC}) to obtain
  \begin{align*}
    \left|\mu'_T(\phi)-\nu'_T(\phi)\right| & \leq
    \left(1+\bar{d}C_B+\frac{(1+\bar{d}C_B)\,C_B(\kappa^{-1}\vee\bar{d})}{2}\right)\,\mathcal{W}_{d_\kappa}(\mu,\nu)
    \\ & \qquad+\frac{C_B(1+\bar{d}C_B)^2(\kappa^{-1}\vee\bar{d})}{2}\mathcal{W}_{d_\kappa}(\mu,\nu).
  \end{align*}
  Therefore, setting
  \[
  C_1=\frac{C_0(1+\bar{d}C_B)(1\vee(\kappa\bar{d}))}{\beta}\left(1+C_B\left(1+\frac{\bar{d}C_B}{2}\right)(\kappa^{-1}\vee\bar{d})\right),
  \]
  we have proved that for all $t\leq T\in I$ and all probability measures $\mu$ and $\nu$ on $E$,
  \[
  \mathcal{W}_{d_\kappa}\left(\frac{\mu P_t(\cdot P_{T-t}\mathbbm{1})}{\mu P_T\mathbbm{1}},\frac{\nu P_t(\cdot P_{T-1}\mathbbm{1})}{\nu
      P_T\mathbbm{1}}\right)
  \leq \frac{C_1}{1\vee(\kappa\bar{d})}\, e^{-\alpha t}\,\mathcal{W}_{d_\kappa}(\mu,\nu).
  \]
  Setting $T=t$ and using that $\kappa^{-1} d_\kappa\leq d\leq (\kappa^{-1}\vee\bar{d}) d_\kappa$,~\eqref{eq:main-compact} follows.

  \medskip
  \noindent\emph{Step 3. Existence and uniqueness of a quasi-stationary distribution.}

  \noindent Uniqueness of a quasi-stationary distribution easily follow from~\eqref{eq:main-compact}, therefore we only have to prove
  existence. Since $(E,d)$ is Polish, the set of probability measures $\mu$ on $E$ such that $\int_E d(x,x_0)\,\mu(dx)<\infty$ for
  some (hence all) $x_0\in E$ equiped with the distance $\mathcal{W}_d$ is also Polish (see \cite[Theorem 6.18]{Villani2009}). Here,
  $d$ is bounded, so the set of probability measures on $E$ is complete for $\mathcal{W}_d$. Let $t_0$ such that $C_1 e^{ - \alpha t_0} < 1$. Then, from~\eqref{eq:main-compact}, the map $\mu \mapsto \frac{\mu P_{t_0} }{\mu P_{t_0} \mathbbm{1}}$ is a contraction on the set of probability measure equipped with $\mathcal{W}_d$. Therefore, by the Banach fixed point theorem, there exists a unique distribution $\nu$ on $E$ such that
  \[
  \frac{\nu P_{t_0} }{\nu P_{t_0} \mathbbm{1}} = \nu.
  \]
 Now, let $t \geq 0$. We have
 \[
   \frac{\frac{\nu P_{t} }{\nu P_{t} \mathbbm{1}} P_{t_0}}{\frac{\nu P_{t} }{\nu P_{t} \mathbbm{1}} P_{t_0} \mathbbm{1}}= \frac{\nu
     P_{t + t_0}}{\nu P_{ t + t_0} \mathbbm{1}}= \frac{\frac{\nu P_{t_0}}{\nu P_{t_0} \mathbbm{1}} P_t}{\frac{\nu P_{t_0}}{\nu
       P_{t_0} \mathbbm{1}} P_t \mathbbm{1}} = \frac{\nu P_t}{\nu P_t \mathbbm{1}}
 \]
This proves that $\frac{\nu P_{t} }{\nu P_{t} \mathbbm{1}}$ is a fixed point of $\mu \mapsto \frac{\mu P_{t_0} }{\mu P_{t_0} \mathbbm{1}}$, which implies by uniqueness that
\[ 
 \frac{\nu P_{t} }{\nu P_{t} \mathbbm{1}} = \nu, \quad \forall t \geq 0
  \]
 i.e., that $\nu$ is a QSD.  
   Hence the proof of Theorem~\ref{thm:main-under-ABC} is completed.% Convergence in \eqref{eq:cv-qsd-compact} follows from \eqref{eq:main-compact} by taking $\nu =  \nu_{\text{QSD}}$.
\end{proof}

\subsection{End of the proof of Theorem~\ref{thm:main-compact}: (A) implies the additional Conditions~(B) and~(C)}
\label{sec:main-proof-2}

  The goal of this section is to prove Proposition~\ref{prop:Aexp-implies-H}, i.e. (A) implies (H) and the next result.
\begin{prop}
  \label{thm:equiv-A-B-Aexp}
  Conditions (A) and (H) imply Conditions~(B) and~(C).
  % The following are equivalent
  % \begin{description}
  % \item (A)
  % \item (A') and (H)
  % \item (A') and (B)
  % \end{description}
  % {\color{blue} Furthermore, these three conditions imply (C).}
\end{prop}
\noindent Together with Theorem~\ref{thm:main-under-ABC}, these two propositions imply Theorem~\ref{thm:main-compact}.

In all this Subsection, we give the proofs in the case $I=\mathbb{R}_+$. The proofs are similar in the case $I=\mathbb{Z}_+$ and we
leave the details to the reader.

\subsubsection{Proof of Proposition \ref{prop:Aexp-implies-H}:  (A) implies (H)}
\label{sec:proofofABC}

We make use of the following lemma:
\begin{lem}
  \label{lem:simple}
  Let $f,g:[0,+\infty)\to\mathbb R$ be two $L^1$ functions. Then, for all $t\geq 0$, we have
  \begin{align*}
    |e^{\int_0^t f_s \mathrm ds}-e^{\int_0^t g_s \mathrm ds}|\leq \int_0^t e^{\int_0^s f_u \mathrm du} |f_s-g_s| e^{\int_s^t g_u \mathrm du}\mathrm ds
  \end{align*}
\end{lem}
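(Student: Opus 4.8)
The plan is to prove the elementary identity
\[
e^{\int_0^t f_s\,\mathrm ds}-e^{\int_0^t g_s\,\mathrm ds}=\int_0^t e^{\int_0^s f_u\,\mathrm du}\,(f_s-g_s)\,e^{\int_s^t g_u\,\mathrm du}\,\mathrm ds,
\]
from which the stated inequality follows immediately by the triangle inequality (bounding the integrand in absolute value and noting that $e^{\int_0^s f_u\,\mathrm du}$ and $e^{\int_s^t g_u\,\mathrm du}$ are nonnegative). So the real content is this exact formula, which is a ``fundamental theorem of calculus'' computation: the map $s\mapsto e^{\int_0^s f_u\,\mathrm du}\,e^{\int_s^t g_u\,\mathrm du}$ interpolates between $e^{\int_0^t g_u\,\mathrm du}$ at $s=0$ and $e^{\int_0^t f_u\,\mathrm du}$ at $s=t$, and differentiating it gives precisely the integrand above.

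First I would set $\varphi(s)=e^{\int_0^s f_u\,\mathrm du}\,e^{\int_s^t g_u\,\mathrm du}$ for $s\in[0,t]$. Since $f,g\in L^1$, the functions $s\mapsto\int_0^s f_u\,\mathrm du$ and $s\mapsto\int_s^t g_u\,\mathrm du$ are absolutely continuous, hence so is $\varphi$ (product of absolutely continuous functions on a compact interval, which are in particular bounded). Therefore $\varphi(t)-\varphi(0)=\int_0^t\varphi'(s)\,\mathrm ds$ with $\varphi'$ defined a.e. Differentiating, using $\frac{\mathrm d}{\mathrm ds}\int_0^s f_u\,\mathrm du=f_s$ and $\frac{\mathrm d}{\mathrm ds}\int_s^t g_u\,\mathrm du=-g_s$ a.e., gives
\[
\varphi'(s)=f_s\,e^{\int_0^s f_u\,\mathrm du}\,e^{\int_s^t g_u\,\mathrm du}-g_s\,e^{\int_0^s f_u\,\mathrm du}\,e^{\int_s^t g_u\,\mathrm du}=e^{\int_0^s f_u\,\mathrm du}\,(f_s-g_s)\,e^{\int_s^t g_u\,\mathrm du}
\]
for a.e.\ $s$. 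Evaluating the boundary terms, $\varphi(0)=e^{0}\,e^{\int_0^t g_u\,\mathrm du}=e^{\int_0^t g_u\,\mathrm du}$ and $\varphi(t)=e^{\int_0^t f_u\,\mathrm du}\,e^{0}=e^{\int_0^t f_u\,\mathrm du}$, so the identity follows.

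Finally, taking absolute values in the identity and using that $e^{\int_0^s f_u\,\mathrm du}\geq 0$ and $e^{\int_s^t g_u\,\mathrm du}\geq 0$ pointwise yields
\[
\Big|e^{\int_0^t f_s\,\mathrm ds}-e^{\int_0^t g_s\,\mathrm ds}\Big|\leq\int_0^t e^{\int_0^s f_u\,\mathrm du}\,|f_s-g_s|\,e^{\int_s^t g_u\,\mathrm du}\,\mathrm ds,
\]
which is the claim. There is no serious obstacle here; the only point requiring a line of care is the justification that $\varphi$ is absolutely continuous so that the fundamental theorem of calculus (in its Lebesgue form) applies with $f,g$ merely $L^1$ rather than continuous. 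If one prefers to avoid invoking absolute continuity directly, an alternative is to first prove the identity for continuous $f,g$ by a classical differentiation argument and then pass to general $L^1$ functions by an approximation argument (approximating $f,g$ in $L^1$ and using dominated convergence on the integral, together with continuity of $x\mapsto e^x$ on the relevant bounded range); I would only fall back on this if the absolute continuity argument is felt to need more detail than is worth including.
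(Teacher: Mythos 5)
Your proof is correct and is essentially the same fundamental-theorem-of-calculus argument as the paper's: your interpolant $\varphi(s)=e^{\int_0^s f}\,e^{\int_s^t g}$ is exactly the paper's $e^{H_s}$ (with $H_s=\int_0^s(f_u-g_u)\,\mathrm du$) multiplied by the constant $e^{\int_0^t g}$, so differentiating and integrating gives the same computation. You handle the $L^1$ hypothesis a bit more directly (absolute continuity and Lebesgue FTC rather than the paper's reduction to continuous $f,g$ plus a density argument) and prove the exact signed identity before taking absolute values, which avoids any case-splitting, but the underlying idea is identical.
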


\begin{proof}
  It is sufficient to prove it for continuous functions $f,g$, since the result follows by a standard density argument. In order to
  prove this result, we consider the differentiable function
  \[
    H:t\in[0,+\infty)\mapsto \int_0^t (f_s-g_s)\,\mathrm ds.
  \]
  We have
  \[
    e^{\int_0^t (f_s-g_s) \mathrm ds}-1 =e^{H_t}-1 = \int_0^t H'_s\,e^{H_s}\,\mathrm ds \leq \int_0^t |f_s-g_s|\,e^{\int_0^s (f_u-g_u)\mathrm du}\,\mathrm ds.
  \]
  Multiplying both sides of the inequality by $e^{\int_0^t g_s ds}$ concludes the proof.
\end{proof}

We now proceed to the proof of Proposition \ref{prop:Aexp-implies-H}
Let us introduce, for all $t\geq 0$,
    \[
    H_t:=\sup_{x\neq y,s\leq t}\frac{\E_x Z_s}{\E_y Z_s}\leq e^{\text{osc}(\rho)\,t}.
    \]
    Using Lemma~\ref{lem:simple} and Markov's inequality, we obtain, for any $t\geq t_0\geq 0$,
    \begin{align*}
        \E_xZ_t& \leq \E_y(Z_t)+\|\rho\|_{\text{Lip}(d)}\int_0^t\E_{(x,y)}\left[Z^X_s
        d(X_s,Y_s)\,\E_{Y_s}Z_{t-s}\right]ds \\ %\label{eq:basic-computationAexp1}\\
        &\leq \E_y(Z_t)+\|\rho\|_{\text{Lip}(d)}\int_0^{t_0}\E_{(x,y)}\left[Z^X_s
        d(X_s,Y_s)\,\E_{Y_s}Z_{t-s}\right]ds\notag\\
        &\qquad +\|\rho\|_{\text{Lip}(d)}\int_{t_0}^{t}\E_{(x,y)}\left[Z^X_s
        d(X_s,Y_s)\,\E_{Y_s}Z_{t-s}\right]ds. %\label{eq:basic-computationAexp2}
    \end{align*}
    For the first integral in the right hand side, we observe that
    \begin{align}
        \label{eq:inequality1Aexp}
        \int_0^{t_0}\E_{(x,y)}\left[Z^X_s
        d(X_s,Y_s)\,\E_{Y_s}Z_{t-s}\right]ds\leq \bar{d}\, e^{\text{osc}(\rho) t_0} \E_y(Z_t).
    \end{align}
    For the second integral, we deduce from (A) that
    \begin{align*}
        \E_{(x,y)}\left[Z^X_s
        d(X_s,Y_s)\,\E_{Y_s}Z_{t-s}\right]
        & \leq \E_{(x,y)}\left[Z^X_s
        d(X_s,Y_s)\,\right] H_{t-s}\min_{z\in E}\E_{z}Z_{t-s}\\
        & \leq {C_A e^{-\gamma_A  s}}\,\bar{d}\, \E_x(Z_s) H_{t-s} \, \E_y\Big(\frac{Z_{t-s}\E_{Y_{t-s}} Z_s}{\min_{z\in E}\E_z
          Z_s}\Big)\\
        & \leq {C_A e^{-\gamma_A  s}} \,\bar{d}\,  H_s H_{t-s}\, \E_y Z_t.
    \end{align*}
    We deduce that
    \begin{align}
    \label{eq:pivot}
        H_t \leq 1+\bar{d}\, \|\rho\|_{\text{Lip}(d)} e^{\text{osc}(\rho) t_0}+ \|\rho\|_{\text{Lip}(d)} {C_A}\,\bar{d} \int_{t_0}^t {e^{-\gamma_A  s}} H_s H_{t-s}\,\mathrm ds.
    \end{align}
    Now let $a_0\leq \text{osc}(\rho)$ and assume that $H_t\leq C_0 e^{a_0t}$ for all $t\geq 0$ for some constant $C_0>0$ (this is
    true for $a_0=\text{osc}(\rho)$, our goal is to prove that it is true for $a_0=0$). Then
    \begin{align*}
     H_t &\leq 1+\bar{d}\, \|\rho\|_{\text{Lip}(d)} e^{\text{osc}(\rho) t_0}+ \|\rho\|_{\text{Lip}(d)} C_\gamma \,\bar{d}\, C_0^2 e^{a_0t}\int_{t_0}^t e^{-\gamma  s}\,\mathrm ds\\
         &\leq 1+\bar{d}\, \|\rho\|_{\text{Lip}(d)} e^{\text{osc}(\rho) t_0}+ \frac{\|\rho\|_{\text{Lip}(d)} C_\gamma \,\bar{d}\, C_0^2}{\gamma}\, e^{a_0t} e^{-\gamma  t_0}.
    \end{align*}
    Now setting $a_1=a_0\,\text{osc}(\rho)/(\text{osc}(\rho)+\gamma)$ and choosing $t_0=a_1 t/\text{osc}(\rho)\leq t$, we obtain
    \begin{align*}
    H_t &\leq (1+\bar{d}\, \|\rho\|_{\text{Lip}(d)}) e^{a_1 t}+ \frac{\|\rho\|_{\text{Lip}(d)} C_\gamma\, \bar{d}\,C_0^2}{\gamma} e^{a_1t},
    \end{align*}
    so that there exists a constant $C_1$ such that, for all $t\geq 0$, $H_t\leq C_1 e^{a_1 t}$. Proceeding by induction, with deduce that, for all $n\geq 0$, there exists a constant $C_n\geq 0$ such that
    \begin{align*}
    H_t \leq C_n e^{a_n t},\text{ where } a_n=\text{osc}(\rho)\left(\frac{\text{osc}(\rho)}{\text{osc}(\rho)+\gamma}\right)^n.
    \end{align*}
    In particular, there exists $C_H>0$ such that
    \begin{align*}
    H_t\leq C_H e^{\gamma t/2}.
    \end{align*}
    We can thus find $t_0$ large enough so that
    \[
     b_0:=\|\rho\|_{\text{Lip}(d)} C_\gamma\, \bar{d}\int_{t_0}^{+\infty} e^{-\gamma  s} H_s\,\mathrm ds < 1.
    \]
    For this value of $t_0$,~\eqref{eq:pivot} implies that, for all $t\geq 0$,
    \begin{align}
    H_t \leq 1+\bar{d}\, \|\rho\|_{\text{Lip}(d)} e^{\text{osc}(\rho) t_0}+ b_0 H_t.
    \end{align}
    This proves that $\sup_{t\geq 0} H_t$ is finite.

\subsubsection{First part of the proof of Proposition~\ref{thm:equiv-A-B-Aexp}: (A) and (H) imply (B)}
\label{sec:main-proof-1}

% Obviously, (A) implies (A'). Since by Proposition \ref{prop:Aexp-implies-H} proved above, it also implies (H), we can prove the
% following
We start by proving a stronger version of (A): for all $x,y\in E$ and $t\leq T$,
\begin{equation}
\label{Aexpstrong-v2}
\E_{(x,y)}\left[G_T^Xd(X_t,Y_t)\right] \leq C_AC_H e^{ - \gamma_A t} d(x,y).
\end{equation}
Indeed, for all $t\leq T\in I$ and $x,y\in E$, it follows from Markov's property that
\begin{align*}
  \E_{(x,y)}\left[G_T^Xd(X_t,Y_t)\right] & =\frac{\E_{(x,y)}\left[Z_t^Xd(X_t,Y_t)\E_{X_t} Z_{T-t}\right]}{\E_x\left[ Z_t\E_{X_t} Z_{T-t}\right]}
  \\ & \leq\frac{\sup_{y\in E}\E_y Z_{T-t}}{\inf_{y\in E}\E_y Z_{T-t}}\,\E_{(x,y)}\left[G_t^Xd(X_t,Y_t)\right].
  % \\ & \leq C_H C_A e^{-\gamma_A t} d(x,y).
\end{align*}
Now, using the fact that $|e^a-e^b|\leq|a-b|(e^a\vee e^b)\leq |a-b|(e^a+e^b)$,we have
\begin{align*}
\mathbb E_{(x,y)}\left| Z_t^X-Z_t^Y\right|
& \leq\mathbb E_{(x,y)}\Big[\left(\exp \left(\int_0^t\rho(X_s)ds\right) + \exp
\left(\int_0^t\rho(Y_s)ds\right)\right)\\
&\qquad\qquad\qquad\qquad\qquad\qquad\qquad\qquad\times\left|\int_0^t (\rho(X_s)-\rho(Y_s))ds\right|\Big] \\ 
&\leq\|\rho\|_{\text{Lip}(d)}\int_0^t\mathbb E_{(x,y)}\left[\left(Z^X_{t}+Z^Y_{t}\right)d(X_s,Y_s)\right]ds \\
& \leq  \|\rho\|_{\text{Lip}(d)} C_AC_H\,\int_0^t e^{ - \gamma_A s} \,\mathrm ds \,\left(\mathbb E_{x} Z_t + \mathbb E_{y} Z_t\right) \, d(x,y) \\
& \leq  \|\rho\|_{\text{Lip}(d)}\, \frac{C_AC_H}{\gamma_A}(1 + C_H) \mathbb E_{y} Z_t d(x,y),
\end{align*}
where we have used \eqref{Aexpstrong-v2} in the third inequality, and (H) in the fourth one.  This implies that, for $C_B = \frac{C_AC_H}{\gamma_A}(1 + C_H)$, we have, for all $x, y \in E$ and $t \in I$, 
\begin{equation}
\label{B'}
\mathbb E_{(x,y)}\left| Z_t^X-Z_t^Y\right| \leq C_B\mathbb{E}_{y} Z_t d(x,y).
\end{equation}

\subsubsection{ End of the proof of Proposition~\ref{thm:equiv-A-B-Aexp}: (A) and (H) imply (C)}
\label{sec:main-proof-4}
% In view of the results of Section~\ref{sec:proofunderABC} and of Theorem~\ref{thm:equiv-A-B-Aexp}, it only remains to check that
% (A) implies (C) to complete the proof of Theorem~\ref{thm:main-compact}.

% Recall that (A) implies (H).
By Condition~(H), for all $x,y\in E$ and $t \geq 0$,
\[
\E_{(x,y)} ( G_t^X \wedge G_t^y ) \geq \frac{1}{C_H\E_x(Z_t)} \E_{(x,y)} (Z_t^X \wedge Z_t^Y),
\]
and (C) is proved if we prove that $ \E_{(x,y)} (Z_t^X \wedge Z_t^Y) \geq c \E_x (Z_t^X)$ for some constant $c > 0$.

We first prove that (C) holds true for close starting points $x,y$. Indeed, using Condition~(B), we get for all $(x,y)$ such that $2C_B d(x,y) \leq 1$,
\begin{align*}
 \E_{(x,y)} (Z_t^X \wedge Z_t^Y) &  = \frac{1}{2}\left( \E_x Z_t + \E_y Z_t  - \E_{(x,y)}(| Z_t^X - Z_t^Y|) \right)\\
 & \geq \frac{1}{2}\left(  \E_x Z_t  +(1 - C_B d(x,y))\E_y Z_t  \right)\\
 & \geq \frac{1}{2}\left(  \E_x Z_t  + \frac{1}{2}\E_y Z_t  \right) \geq \frac{1 + (2C_H)^{-1}}{2} \E_x Z_t
\end{align*}
and thus (C) is satisfied whenever $2C_B d(x,y) \leq 1$, with constant $C_C' := \frac{1 + (2C_H)^{-1}}{2}$.

Now, let $x,y \in E$, $t_0 = \frac{1}{\gamma_A} \log( 4 C_B C_A C_H \bar d)$ and $t \geq t_0$. Using the bound we derived for points that are at distance lower than $1/2C_B$, and the fact that $(X,Y)$ is Markov, we have
\begin{align*}
 \E_{(x,y)} (Z_t^X \wedge Z_t^Y) & \geq e^{ - osc(\rho)t_0} \E_{(x,y)} \left( Z_{t_0}^X \E_{X_{t_0}, Y_{t_0}}( Z_{t-{t_0}}^X \wedge Z_{t-t_0}^Y) \1_{ d(X_{t_0},Y_{t_0}) \leq \frac{1}{2C_B}} \right)\\
 & \geq e^{ - osc(\rho)t_0} C'_C \E_{(x,y)}\left( Z_{t_0}^X \E_{X_{t_0}}(Z_{t-t_0}^X) \1_{ d(X_{t_0},Y_{t_0}) \leq \frac{1}{2C_B'}} \right)\\
 & =e^{ - osc(\rho)t_0} C'_C \E_{(x,y)}\left( Z_{t}^X  \1_{ d(X_{t_0},Y_{t_0}) \leq \frac{1}{2C_B}} \right)
\end{align*}
Now, \eqref{Aexpstrong-v2} yields for all $t\geq t_0$
\begin{align*}
\E_{(x,y)}\left( Z_{t}^X  \1_{ d(X_{t_0},Y_{t_0}) > \frac{1}{2C_B}} \right) & \leq 2C_B \E_{(x,y)} Z_t^X  d(X_{t_0}, Y_{t_0} )
\leq 2 C_B C_A C_H e^{ -\gamma_A t_0} \, \bar d \, \E_x Z_t =\frac{\E_x Z_t}{2}.
\end{align*}
% Thus, for $t \geq t_0=\frac{1}{\gamma_A} \log( 4 C_B' C_A C_H \bar d)$,
% \[
% \E_{(x,y)}\left( Z_{t}^X  \1_{ d(X_{t_0},Y_{t_0}) \leq \frac{1}{2C_B'}} \right) \geq \frac{1}{2} \E_x Z_t
% \]
% and 
Therefore, 
\[
\E_{(x,y)} (Z_t^X \wedge Z_t^Y)  \geq \frac{e^{ - osc(\rho)t_0} C'_C }{2} \E_x(Z_t)
\]
which proves (C) for $t \geq t_0$. Finally, for all $t \leq t_0$,
\[
\E_{(x,y)} ( G_t^X \wedge G_t^y ) \geq e^{ - osc(\rho) t_0}
\]
and (C) holds true. % Hence, Theorem~\ref{thm:main-compact} is proved.

\subsection{Subexponential penalized coupling}
\label{sec:subexp}

 For non-penalized process, it is known that uniform contraction in Wasserstein distance implies uniform exponential
  convergence. It is therefore natural to study the consequences of the following subexponential counterpart to Condition~(A):
  \begin{description}
\item[(A')] (subexponential penalized coupling)  There exists a nonincreasing function $\gamma:I\rightarrow \R_+$ converging
  to 0 and for all $x,y\in E$ there exists a Markovian coupling  $\P_{(x,y)}$ between $\mathbb{P}_x$ and $\mathbb{P}_y$ such
  that for all $t\in I$,
 \begin{equation*}
    \E_{(x,y)}\left[G^X_{t}d(X_t,Y_t)\right]\leq \gamma(t) d(x,y). % C_B e^{-\gamma t} d(x,y).
  \end{equation*}
\end{description}

Note that we never used in the proof of Theorem~\ref{thm:main-under-ABC} the explicit expression of the function $\gamma(t)$, except
to compute the explicit expression of $\alpha$ in~\eqref{eq:alpha-explicite}. Therefore, Theorem~\ref{thm:main-under-ABC} remains true
under the weaker conditions~(A'), (B) and~(C).

Furthermore, the next result precises the link between Conditions~(A) and~(A').
\begin{prop}
  \label{prop:subexp}
  Conditions~(A') and~(H) imply Condition~(A).
\end{prop}
Note that this result can be combined with~\eqref{eq:interm-1} and Propositions~\ref{prop:Aexp-implies-H}
and~\ref{thm:equiv-A-B-Aexp} to deduce the following equivalences:
\[
  (A)\quad\Longleftrightarrow\quad (A')\text{ and }(H) \quad\Longleftrightarrow\quad (A')\text{ and }(B).
\]
In addition, as proved in Proposition~\ref{thm:equiv-A-B-Aexp}, all these conditions imply (C).

\begin{proof}[Proof of Proposition~\ref{prop:subexp}]
% \subsubsection{\color{blue} (A'), (H) imply (A)}
% \label{sec:main-proof-3}
% Recall from \eqref{eq:interm-1} that
% % Note first that (B) implies that, for all $x,y\in E$ and $t\in I$,
% % \[
% % \E_x Z_t\leq (1+C_B\bar d)\E_y Z_t,
% % \]
% % so that
% (B) implies (H). Hence, to complete the proof of Theorem~\ref{thm:equiv-A-B-Aexp}, it only remains to prove that (A') and (H) imply
% (A). % We give the proof in the case $I=[0,+\infty)$; the proof in the case $I=\Z_+$ is similar and we leave the details to the
% % reader.

From (A'), let $t_0\in[0,+\infty)$ be large enough so that, for all $t_1\geq t_0$,
\[
\E_{(x,y)}\left[ G^X_{t_1} d(X_{t_1},Y_{t_1})\right]\leq\frac{1}{2 C_H}d(x,y).
\]
Then, for all $k\in\N$, we deduce from (H) that
\begin{align*}
  \E_{(x,y)}\left[ G^X_{kt_1} d(X_{t_1},Y_{t_1})\right]
  & =\E_{(x,y)}\left[ G^X_{t_1} d(X_{t_1},Y_{t_1})\frac{\E_x (Z_{t_1}) \E_{X_{t_1}} (Z_{(k-1)t_1})}{\E_x \left[ Z_{t_1}\E_{X_{t_1}}Z_{(k-1)t_1}\right]}\right] \\
  % & \leq\E_{(x,y)}\left[ G^X_{t_1} d(X_{t_1},Y_{t_1})\frac{\E_{X_{t_1}} Z_{kt_1}}{\E_x Z_{kt_1}}\right] \\
  & \leq C_H\E_{(x,y)}\left[ G^X_{t_1} d(X_{t_1},Y_{t_1})\right]\leq \frac{1}{2}d(x,y).
\end{align*}
Then, for all $1\leq \ell\leq k$,
\begin{align*}
  \E_{(x,y)}\left[ Z^X_{kt_1} d(X_{\ell t_1},Y_{\ell t_1})\right]
  & =\E_{(x,y)}\left[ Z^X_{(\ell-1)t_1}\E_{(X_{(\ell-1)t_1},Y_{(\ell-1)t_1})}\left(Z^X_{(k-\ell+1)t_1} d(X_{t_1},Y_{t_1})\right)\right] \\
  & \leq \frac{1}{2}\E_{(x,y)}\left[
    Z^X_{(\ell-1)t_1}d(X_{(\ell-1)t_1},Y_{(\ell-1)t_1})\E_{(X_{(\ell-1)t_1},Y_{(\ell-1)t_1})}Z^X_{(k-\ell+1)t_1}\right] \\
  & =\frac{1}{2}\E_{(x,y)}\left[ Z^X_{kt_1} d(X_{(\ell-1) t_1},Y_{(\ell-1) t_1})\right],
\end{align*}
from which we deduce by induction that, for all $k\in\N$,
\begin{equation}
  \label{eq:AH-Aexp}
 \E_{(x,y)}\left[ Z^X_{kt_1} d(X_{k t_1},Y_{k t_1})\right]\leq 2^{-k} d(x,y)\E_x Z_{k t_1}.
\end{equation}
Now, let $t\geq t_0$ be arbitrary and set $t_1:=t/k$ with $k:=\lfloor t/t_0\rfloor\geq 1$. Note that $t_1\geq t_0$, so that it follows
from~\eqref{eq:AH-Aexp} that
\[
  \E_{(x,y)}\left[ G^X_{t} d(X_{t},Y_t)\right]=\E_{(x,y)}\left[ G^X_{kt_1} d(X_{k t_1},Y_{k t_1})\right]\leq 2^{1-t/t_0} d(x,y).
\]
Since it follows from (A') that, for all $t\leq t_0$,
\[
  \E_{(x,y)}\left[ G^X_{t} d(X_{t},Y_t)\right]\leq \gamma(0) d(x,y),
\]
we have proved (A).
\end{proof}

\subsection{Proof of the other consequences}
\label{sec:other-proofs}

\subsubsection{Proof of Theorem \ref{thm:eta-compact}}
\label{sec:proof-eta}

For all $t\geq 0$ and $x\in E$, we define
\begin{equation}
  \label{eq:def-eta_t}
  \eta_t(x)=e^{\lambda_0 t}\E_x Z_t=\frac{\E_x Z_t}{\E_{\nu_{QS}} Z_t}.  
\end{equation}
% Note that $\nu_{QS}(\eta_t)=1$.
Markov's property entails that, for all $s,t\geq 0$ and $x\in E$, $\eta_{t+s}(x)=e^{\lambda_0 t}P_t \eta_s(x)$, hence
\begin{equation}
  \label{eq:basic-eta_t}
  |\eta_{t+s}(x)-\eta_t(x)|=\eta_t(x)\left|\frac{P_t\eta_s(x)}{P_t\mathbbm{1}(x)}-1\right|.
\end{equation}
It follows from~\eqref{eq:interm-1} that $\E_x Z_t\leq(1+\bar{d}\,C_B)\E_{\nu_{QS}} Z_t=(1+\bar{d}C_B)e^{-\lambda_0 t}$, so
we deduce from (B) that $\eta_t$ is $d$-Lipschitz with $\|\eta_t\|_{\text{Lip}(d)}\leq
C_B(1+\bar{d}C_B)$ for all $t\in I$. Hence we deduce from~\eqref{eq:hyp-eta} that
\begin{align*}
  \left|\frac{P_t\eta_s(x)}{P_t\mathbbm{1}(x)}-1\right|
  & =\left|\delta_x R_{0,t}^t \eta_s-\nu_{QS}(\eta_s)\right| \\
  & \leq C_1C_B(1+\bar{d}C_B)\,e^{-\alpha t}\,\mathcal{W}_{d}(\delta_x,\nu_{\text{QSD}}) \\
  & \leq C_1C_B\bar{d}(1+\bar{d}C_B)\,e^{-\alpha t}.
\end{align*}
In addition, since $\nu_{QS}(\eta_t)=1$ and $d\leq \bar d$, the Lipschitz bound on $\eta_t$ implies that
$\|\eta_t\|_\infty\leq 1+C_B\bar{d}(1+\bar{d}C_B)$, so we deduce from~\eqref{eq:basic-eta_t} that
\begin{align*}
  |\eta_{t+s}(x)-\eta_t(x)|
  % & \leq C_1C_B\bar{d}(1+\bar{d}C_B)\,e^{-\alpha
  % t}\eta_t(x) \\ 
    & \leq C_1C_B\bar{d}(1+\bar{d}C_B)\left(1+C_B\bar{d}(1+\bar{d}C_B)\right)\,e^{-\alpha t}.
\end{align*}
Therefore, $\eta_t$ converges uniformly to a function $\eta$. Since $\eta_t$ is uniformly (in $t$) $d$-Lipschitz and
$\nu_{\text{QSD}}(\eta_t)=1$ for all $t\in I$, the same properties are satisfied by $\eta$. Since in addition
$P_{t}\eta_s(x)=e^{-\lambda_0 t}\eta_{t+s}(x)$, we deduce that $P_t\eta(x)=e^{-\lambda_0 t}\eta(x)$ letting $s\rightarrow+\infty$.
Finally, it follows from~\eqref{eq:interm-1} and the fact that $\nu_{\text{QSD}}(\eta_t)=1$ that, for all $t\in I$,
\[
  1\leq\sup_{x\in E}\eta_t(x)\leq (1+\bar{d}C_B)\inf_{y\in E}\eta_t(y).
\]
Letting $t\rightarrow+\infty$, we deduce that $\eta(x)\geq 1/(1+\bar{d}C_B)>0$ for all $x\in E$.

\subsubsection{Proof of Theorem \ref{thm:Q-process-compact}}
\label{sec:proof-Q-proc}

Using Theorem~\ref{thm:eta-compact} and the notation~\eqref{eq:def-eta_t}, we have for all $s\leq t\in I$ and
$A\in\mathcal{F}_s$
\begin{align}
  \left|\frac{\E_x(\mathbbm{1}_A Z_t)}{\E_x Z_t}-\right.
  & \left.\frac{\E_x(\mathbbm{1}_A Z_s\eta(X_s))}{\eta(x)}e^{\lambda_0 s}\right|
    =    e^{\lambda_0 s}\left|\frac{\E_x(\mathbbm{1}_A Z_s\eta_{t-s}(X_s))}{\eta_t(x)}-\frac{\E_x(\mathbbm{1}_A
    Z_s\eta(X_s))}{\eta(x)}\right| \notag \\ 
  & \leq e^{\lambda_0 s}\E_x(Z_s\eta_{t-s}(X_s))\frac{|\eta(x)-\eta_t(x)|}{\eta(x)\eta_t(x)}+e^{\lambda_0
    s}\frac{\E_x\left[Z_s|\eta_{t-s}(X_s)-\eta(X_s)|\right]}{\eta(x)} \notag \\
  & \leq\frac{C e^{-\alpha t}}{\underline{\eta}}+\frac{C\bar{\eta}}{\underline{\eta}}e^{-\alpha(t-s)}
    \label{eq:pf-Q-proc}
\end{align}
for some constant $C$, where $\bar\eta=\sup_{t\in I,\,x\in E}\eta_t(x)<+\infty$ and $\underline{\eta}=\inf_{x\in E}\eta(x)>0$. This
proves~\eqref{eq:Q-process-compact} with
\[
  \mathbb{Q}_x(A)=\frac{\E_x(\mathbbm{1}_A Z_s\eta(X_s))}{\eta(x)}e^{\lambda_0 s}.
\]
The rest of Point (i) and Point (ii) can be deduced from Theorem~\ref{thm:eta-compact} exactly as Points~(i) and~(ii) in Theorem~3.1
of~\cite{ChampagnatVillemonais2016}. In addition, the fact that $\nu_Q$ is invariant for $(\widetilde{P}_t)_{t\in I}$ is
straightforward, and its uniqueness will follow from~\eqref{eq:Q-proc-compact}.

So it only remains to prove~\eqref{eq:Q-proc-compact}. We can deduce it from the results of Section~\ref{sec:proofunderABC}: it
follows from~\eqref{eq:QE} that, for all $x,y\in E$, $s\leq t\in I$ and $\phi\in\text{Lip}_1(d_\kappa)$,
\begin{align*}
  \left|\delta_x R_{0,s}^t\phi-\delta_y R_{0,s}^t\phi\right|
  & =e^{\lambda_0 s}\left|\frac{\E_x
    Z_s\phi(X_s)\eta_{t-s}(X_s)}{\eta_t(x)}-\frac{\E_y Z_s\phi(X_s)\eta_{t-s}(X_s)}{\eta_t(y)}\right| \\
  & \leq\frac{C_0}{\beta}\,e^{-\alpha s}\,d_\kappa(x,y).
\end{align*}
Letting $t\rightarrow+\infty$, Theorem~\ref{thm:eta-compact} implies that
\begin{align*}
  \left|\delta_x\widetilde{P}_s\phi-\delta_y\widetilde{P}_s\phi\right|
  & =e^{\lambda_0 s}\left|\frac{\E_x Z_s\phi(X_s)\eta(X_s)}{\eta(x)}-\frac{\E_y Z_s\phi(X_s)\eta(X_s)}{\eta(y)}\right|
    \leq \frac{C_0}{\beta}\,e^{-\alpha s}\,d_\kappa(x,y).
\end{align*}
Integrating this with respect to an optimal coupling between $\mu$ and $\nu$, we deduce that
\begin{equation*}
  \mathcal{W}_{d_\kappa}\left(\mu\widetilde{P}_s,\nu\widetilde{P}_s\right)\leq\frac{C_0}{\beta}\,e^{-\alpha s}\,\mathcal{W}_{d_\kappa}(\mu,\nu).
\end{equation*}
Using the fact that $\kappa^{-1} d_\kappa\leq d\leq (\kappa^{-1}\vee\bar{d}) d_\kappa$, Theorem~\ref{thm:Q-process-compact} follows.

\subsubsection{Proof of Theorem \ref{thm:QED}}
\label{sec:proof-QED}

We give the proof in the case $I=[0,+\infty)$. The proof is similar in the case where $I=\Z_+$. Let $\phi\in\text{Lip}_1(d)$ and set
$\bar{\phi}=\phi-\nu_Q(\phi)$. Then, using the notations of Section~\ref{sec:proof-eta},
  \[
    \left|\mu^x_t(\phi)-\nu_Q(\phi)\right|=\frac{1}{t}\left|\int_0^t\frac{\E_x[Z_t\bar{\phi}(X_s)]}{\E_x
        Z_t}ds\right|=\frac{1}{t}\left|\int_0^t\frac{\E_x[Z_s\bar{\phi}(X_s)\eta_{t-s}(X_s)]}{\eta_t(x)}e^{\lambda_0 s}ds\right|.
  \]
  Now, we deduce from~\eqref{eq:pf-Q-proc} with $A=E$ that
  \[
   e^{\lambda_0 s} \left|\frac{\E_x[Z_s\bar{\phi}(X_s)\eta_{t-s}(X_s)]}{\eta_t(x)}-\frac{\E_x[Z_s\bar{\phi}(X_s)\eta(X_s)]}{\eta(x)}\right|
   \leq \|\bar{\phi}\|_\infty\left(\frac{C e^{-\alpha t}}{\underline{\eta}}+\frac{C\bar{\eta}}{\underline{\eta}}e^{-\alpha(t-s)}\right)
   % \begin{aligned}
   %   & \leq  \|\bar{\phi}\|_\infty\frac{e^{\lambda_0 s}\E_x
   %     Z_s\eta_{t-s}(X_s)}{\eta_t(x)}\,\frac{|\eta(x)-\eta_t(x)|}{\eta(x)}+\|\bar{\phi}\|_\infty\frac{e^{\lambda_0 s}\E_x
   %     \left[Z_s\left|\eta_{t-s}(X_s)-\eta(X_s)\right|\right]}{\eta(x)} \\ & \leq \frac{\bar{d}C}{\underline{\eta}} e^{-\alpha
   %     t}+\bar{d}C e^{-\alpha(t-s)}\frac{\bar{\eta}}{\underline{\eta}}\leq Ce^{-\alpha(t-s)},
   %   \end{aligned}
   \leq C'e^{-\alpha(t-s)},
  \]
  for some constant $C'$, where
  % may change from line to line, $\underline{\eta}=\inf_{x\in E}\eta(x)$,
  % $\bar{\eta}=\sup_{x\in E,\,t\geq 0}\eta_t(x)$ and
  we used the fact $\|\bar{\phi}\|_\infty\leq \bar{d}$ since $\bar{\phi}$ is 1-Lipschitz and $\nu_Q(\bar{\phi})=0$.

  Therefore,
  \begin{align*}
    \left|\mu^x_t(\phi)-\nu_Q(\phi)\right|
    & \leq \frac{C'}{t}\int_0^t e^{-\alpha(t-s)}ds+\frac{1}{t}\left|\int_0^t e^{\lambda_0
      s}\frac{\E_x[Z_s\bar{\phi}(X_s)\eta(X_s)]}{\eta(x)}ds\right| \\
    & \leq\frac{C'}{\alpha t}+\frac{1}{t}\int_0^t\left|\E_x^\Q \phi(X_s)-\nu_Q(\phi)\right|ds, 
  \end{align*}
  where $\E^\Q_x$ is the expectation with respect to $\Q_x$. Now, it follows from Theorem~\ref{thm:Q-process-compact}~(iii) that
  \[
    \left|\E_x^\Q \phi(X_s)-\nu_Q(\phi)\right|\leq\mathcal{W}_d\left(\Q_x(X_s\in\cdot),\nu_Q\right)\leq C'' e^{-\alpha s}
  \]
  for some constant $C''$. Hence,
  \[
    \left|\mu^x_t(\phi)-\nu_Q(\phi)\right|\leq \left(\frac{C'}{\alpha}+\frac{C''}{\alpha}\right)\frac{1}{t}
  \]
  and Theorem~\ref{thm:QED} is proved.

\section{Applications}
\label{sec:applications}

\subsection{ General criteria relating the coupling rate of the non-penalized process and the killing rate}
\label{sec:coupling-faster-killing}

 Our goal in this section is to provide a general criterion to deal with convergence of conditional distributions in the
  case where there exists a coupling for the non-penalized process leading to its exponential convergence in Wasserstein distance.
  % Specifically, we show in the next proposition that, if the oscillation of the killing rate is smaller than the rate of coupling of
  % the non-penalized process, then Condition~(A) is straightforward.

\begin{prop}
  \label{prop:CFTK} ~
  
  \begin{description}
  \item[\textmd{(i)}]  Assume that there exist four constants $C,c>0$ and $\gamma,\kappa>0$ such that
  \begin{gather}
  	\label{eq:subkillingrate}
  	\mathbb E_x(Z_t)\geq c\,e^{-\kappa t},\ \forall t\geq 0,\ \forall x\in E, \\
    \label{assumptionCFTK3}
    \mathbb E_{(x,y)}\left[d(X_t,Y_t)\right]\leq  C\,e^{-\gamma t}\,d(x,y),\ \forall t\geq 0,\ \forall x,y\in E,
  \end{gather}
  for some Markov coupling $\mathbb P_{(x,y)}$ of $\mathbb P_x$ and $\mathbb P_y$ and
  \begin{align}
    \label{ass-gamma}
    \gamma>\kappa-\inf_E\rho.
  \end{align}
  Then Assumption~(A) holds true with $C_A=C/c$ and
  $\gamma_A=\gamma-\kappa+\inf_E\rho$.
\item[\textmd{(ii)}] Assume that there exists a Markovian coupling $(\mathbb P_{(x,y)})_{x,y\in E}$ of the non-penalized process $X$
  with extended generator $L^c$ such that $d$ belongs to its domain and
  \begin{align}
    \label{assumptionCFTK2}
    \sigma:=\sup_{x\neq y} \frac{L^c d(x,y)}{d(x,y)}-\rho(x)+\|\rho\|_\infty<0.
  \end{align}
  Then Assumption~(A) holds true with $C_A=1$ and
  $\gamma_A=-\sigma$ 
  \end{description}
\end{prop}

% \denisIL{Dans la preuve, on fait l'hypothèse qu'il existe un couplage Markovien réalisant cette inégalité, mais je ne pense pas que ce soit restrictif, quitte à se placer en temps discret, à discuter.}

\begin{proof}[Proof of Proposition~\ref{prop:CFTK}]
  Let us first prove (i).  The fact that $ G_t^X$ is smaller than $\frac1c\,e^{t\,(\kappa-\inf_E\rho)}$ almost surely entails that (A) holds true
  with $C_A=C/c$ and $\gamma_A=\gamma-\kappa+\inf_E\rho$ and the coupling $\mathbb P_{(x,y)}$.

  To prove~(ii), we first observe that $\sigma<0$ and $d$ bounded imply that $L^c d$ is also bounded on $E^2$. Therefore, the process
  \[
    M_t:=d(X_t,Y_t)-d(x,y)-\int_0^t L^c d(X_s.Y_s)ds
  \]
  is a $L^\infty$ martingale, hence $L^2$. It\^o's formula then entails that, for all $0<s<t$,
  \begin{align*}
    e^{-\int_0^t \rho(X_u)du}d(X_t,Y_t) & =
    e^{-\int_0^s \rho(X_u)du}d(X_s,Y_s)+\int_s^t
    e^{-\int_0^r \rho(X_u)du} dM_r \\ & +\int_s^t e^{-\int_0^r \rho(X_u)du}\left[-\rho(X_r)d(X_r,Y_r)+L^c d(X_r,Y_r)\right]dr
  \end{align*}
  where $\int_0^t e^{-\int_0^r \rho(X_u)du} dM_r$ is a martingale. Hence
  \begin{align*}
    \frac{\partial }{\partial t}\mathbb E_{(x,y)}\left[G_t^X d(X_t,Y_t)\right]\leq
    \frac{\mathbb{E}_{(x,y)}\left[e^{-\int_0^t\rho(X_s)\,\mathrm ds}\left(-\rho(X_t)d(X_t,Y_t)+L^cd(X_t,Y_t)+\|\rho\|_\infty
    d(X_t,Y_t)\right)\right]}{\mathbb E_{x}\left[e^{-\int_0^t\rho(X_s)\,\mathrm ds}\right]}.
  \end{align*}
  Therefore the conclusion follows from~\eqref{assumptionCFTK2} and Gronwall lemma.
\end{proof}

\begin{rem}
  \label{rem:coupling-faster-than-killing}
  We first observe that~\eqref{eq:subkillingrate} always holds true when $\kappa=\|\rho\|_\infty$, in which case the condition in (i) reduces to $\gamma>\text{osc}(\rho)$.

  Let us now compare (i) and (ii). If~\eqref{assumptionCFTK2} holds true, then $\sup_{x\neq y} \frac{L^c d(x,y)}{d(x,y)}<0$ and
  therefore~\eqref{assumptionCFTK3} holds true with $\gamma=-\sup_{x\neq y} \frac{L^c d(x,y)}{d(x,y)}$ and $C=1$.
  However,~\eqref{assumptionCFTK2} does not imply~\eqref{ass-gamma} as shown by the example of the Markov chain on
  $\{1,2,3\}$ with transition rate matrix
  \[
    \begin{pmatrix}
      -2K & K & K \\
      \varepsilon & -2\varepsilon & \varepsilon \\
      \varepsilon & \varepsilon & -2\varepsilon
    \end{pmatrix}
  \]
  and killing rate $\rho=\1_{\{2,3\}}$. Straightforward computations imply that, provided $K\geq 1$, $\sigma<0$ for the independent
  coupling, but $\gamma\leq 4\varepsilon$ for any coupling and $\kappa-\inf\rho\geq 2/3$ for any $K\geq\varepsilon$.
    
  Conversely, in general~\eqref{assumptionCFTK3} does not imply that $\sup_{x\neq y} \frac{L^c d(x,y)}{d(x,y)}<0$ and therefore,
  taking $\rho\equiv 0$ or close to 0 would give a model that satisfies~\eqref{eq:subkillingrate},~\eqref{assumptionCFTK3}
  and~\eqref{ass-gamma} but contradicts~\eqref{assumptionCFTK2}. This shows that Points~(i) and~(ii) of Proposition~\ref{prop:CFTK}
  cover distinct cases.
%   bound 
%   This suggests that (i)
%   may be more general than (ii), however this does not hold true because $\sigma<0$ does not imply in general that
%   %$\gamma>\text{osc}(\rho)$. 
%   \eqref{eq:subkillingrate} holds true with some $\kappa<\gamma+\inf_E\rho$.
 
%  {\color{magenta} Je trouve que la suite n'est pas très claire. Au fond, ne suffirait-il pas de dire que $\sigma<0$ implique~\eqref{assumptionCFTK3} avec $C=1$, et que cela ne couvre pas la généralité de (i) ?}
% {\color{black}  Conversely, observing that $\sigma\leq\sup_{x\neq y} \frac{L^c d(x,y)}{d(x,y)}+\text{osc}(\rho)$,
%   assuming that this is a negative constant would imply~\eqref{assumptionCFTK2}. This suggests that (i) may be weaker than (ii),
%   however this does not seem to hold true either because~\eqref{assumptionCFTK2} only implies~\eqref{assumptionCFTK3} with a constant
%   $C=1$, which is much stronger assumption than~\eqref{assumptionCFTK3}. Therefore, none of Points~(i) and~(ii) of
%   Proposition~\ref{prop:CFTK} imply the other.}
\end{rem}

 Let us now discuss related results from the literature. Namely,
in~\cite{CloezThai2016,Villemonais2020,JournelMonmarche2022}, the authors consider an interacting particle system built on an
underlying absorbed Markov process, and they prove, under a small Lipschitz norm assumption for the absorbing rate, the exponential
contraction of the law of the interacting particle system, uniformly in the number of particle. Since this particle system converges
toward the conditional distribution of the underlying absorbed Markov process when the number of particles goes to infinity, this
entails the exponential contraction in Wasserstein distance of the conditional distribution of the underlying Markov process. In
particular, in the continuous time setting, the sharpest explicit assumption in the above references (see
\cite[Remark~2.4]{CloezThai2016} for the trivial distance, and \cite[Remark~7]{Villemonais2020} with $\theta_P=0$ for the general
case) is that the distance $d$ is bounded (say by $\bar{d}$), that $X$ is a non-explosive pure jump Markov process, that $\rho$ is
Lipschitz and that there exists a Markovian coupling $(\mathbb P_{(x,y)})_{x,y\in E}$ of $X$ with extended infinitesimal generator
$L^c$ and such that
\begin{align}
\label{assumptionCFTK1}
\sigma_1:=\sup_{x\neq y} \frac{L^c d(x,y)}{d(x,y)}+\frac{|\rho(x)-\rho(y)|}{d(x,y)}\bar{d}-\rho(x)\vee \rho(y)+\|\rho\|_\infty<0.
\end{align}
Note that we have $\sigma_1\leq \sigma$ and, importantly, there is no condition on the Lipschitz norm of $\rho$
in~\eqref{assumptionCFTK2}. Therefore,~\eqref{assumptionCFTK2} is less stringent than~\eqref{assumptionCFTK1}  and
Proposition~\ref{prop:CFTK} improves the results of~\cite {Villemonais2020}.

\subsection{Almost sure contracting coupling}
\label{sec:almost-sure-contraction}
In this section we assume that for all $x, y \in E$, there exists a Markovian coupling  measure between $\mathbb{P}_x$ and $\mathbb{P}_y$ such that for all $t \in I$,
\begin{equation}
\label{eq:ascontraction}
d(X_t, Y_t) \leq C_0 e^{ - \gamma t} d(x,y) \quad \mathbb{P}_{(x,y)} - \text{almost surely}.
\end{equation}
In this situation, Assumption (A) is trivially satisfied and therefore the conclusions of Theorem \ref{thm:main-compact} hold
true. Importantly, no assumption on $\gamma-\text{osc}(\rho)$ is required (compare with Proposition~\ref{prop:CFTK}). To obtain a
better rate of convergence $\alpha$ in Equations \eqref{eq:main-compact} and \eqref{eq:hyp-eta} (see Remark~\ref{rem:explicit}), we
prove below conditions (A'), (B) and (C) with explicit constants

\begin{prop}
	\label{prop:almostsure}
  Assumptions (A'), (B) and (C) hold with $\gamma(t)=C_0 e^{ - \gamma t}$ , 
  \begin{equation}
    \label{eq:def-C_B-a.s.-contr}
    C_B=\frac{C_0e^{\text{osc}(\rho)}\|\rho\|_{\text{Lip}(d)}}{1-e^{-\gamma}}
    \exp\Big(\frac{C_0e^{\text{osc}(\rho)}\|\rho\|_{\text{Lip}(d)}\bar{d}}{1-e^{-\gamma}}\Big)
  \end{equation}
  in the discrete time case (where we recall the notation $p(x)=e^{-\rho(x)}$),  
  \[C_B = \|\rho\|_{\text{Lip}(d)} \frac{C_0}{\gamma} \exp( \|\rho\|_{\text{Lip}(d)} \frac{C_0}{\gamma} \bar d)\]
  in the continuous time case, and in both cases,
  \[C_C = (1 + C_B \bar d)^2.\]
\end{prop}

\begin{proof}
We only write the proof in the discrete time case, the continuous time case being similar.
(A') is a straightforward consequence of \eqref{eq:ascontraction} since $\E_{(x,y)}(G_t^X)= 1$. We now prove (B). Our assumptions entail that, for all $x,y\in E$,
   \begin{align*}
   |p(x)-p(y)|\leq (p(x)\vee p(y))\|\rho\|_{\text{Lip}(d)} d(x,y)\leq e^{\text{osc}(\rho)}\|\rho\|_{\text{Lip}(d)} d(x,y)p(y).
   \end{align*}
    We deduce that, $\mathbb P_{(x,y)}$-almost surely for all $x,y\in E$, for all $n\geq 1$,
   \begin{align*}
   p(X_0)\cdots p(X_{n-1}) &\leq (1+e^{\text{osc}(\rho)}\|\rho\|_{\text{Lip}(d)}d(X_0,Y_0)) p(Y_0) \\ & \qquad\qquad\cdots
                             (1+e^{\text{osc}(\rho)}\|\rho\|_{\text{Lip}(d)} d(X_{n-1},Y_{n-1})) p(Y_{n-1}).
   \end{align*}
   Using~\eqref{eq:ascontraction}, this implies
   \begin{align*}
   p(X_0)\cdots p(X_{n-1})
   &\leq (1+C_0 e^{\text{osc}(\rho)}\|\rho\|_{\text{Lip}(d)} d(X_0,Y_0)) \\ & \qquad\qquad \cdots (1+C_0 e^{\text{osc}(\rho)}\|\rho\|_{\text{Lip}(d)}
     e^{-\gamma (n-1)} d(X_0,Y_0))\, p(Y_0)\cdots p(Y_{n-1})\\
   &\leq (1+C_B d(X_0,Y_0))\, p(Y_0)\cdots p(Y_{n-1}),
   \end{align*}
   where $C_B$ is defined in~\eqref{eq:def-C_B-a.s.-contr}. Assuming that $p(Y_0)\ldots p(Y_{n-1})\leq p(X_0)\ldots p(X_{n-1})$
   (otherwise, exchange $X$ and $Y$ in the following inequality),
% Applying again this inequality (if necessary exchanging $X$ and $Y$), we deduce that
\begin{multline*}
p(X_0)\cdots p(X_{n-1}) - p(Y_0)\cdots p(Y_{n-1})\leq C_B d(X_0,Y_0)\, p(Y_0)\cdots p(Y_{n-1})\\
% &\leq  C_1 d(X_0,Y_0)) (1+C_1 d(X_0,Y_0))\,p(X_0)\cdots p(X_n)\\
% &\leq C_1 (1+C_1) p(X_0)\cdots p(X_n).
\leq C_B d(X_0,Y_0) (p(X_0)\ldots p(X_{n-1}))\wedge(p(Y_0)\ldots p(Y_{n-1})).
\end{multline*}
This entails that, $\mathbb P_{(x,y)}$-almost surely for all $x,y\in E$, we have
\begin{align}
\label{eq:useful1}
| Z_{n}^X-Z_{n}^Y|\leq C_B d(x,y) Z_n^X\wedge Z_n^Y.
\end{align}
Taking the expectation implies (B).

To prove (C), we observe that $d\leq \bar d$ and hence, from~\eqref{eq:useful1}, for all $n\geq 1$ and all $x,y\in E$,
\begin{align*}
Z_n^Y\leq (1 + C_B \bar d)\,Z_n^X,\quad\P_{(x,y)}-as.
\end{align*}
In particular, 
\begin{align*}
{G_n^X=}\frac{Z_n^X}{\E_x\left(Z_n^X\right)}\geq \frac{1}{(1 + C_B \bar d)^2}\frac{Z_n^Y}{\E_y\left(Z_n^Y\right)}{= \frac{1}{(1 + C_B
  \bar d)^2} G_n^Y}.
\end{align*}
{Since $\mathbb{E}_x G_n=1$, this implies }
that (C) holds true with $C_C = (1 + C_B \bar d)^2$.
\end{proof}
Recall that from \eqref{eq:alpha-explicite}, we have
\[
\alpha=\frac{\gamma\log(\frac{2C_C}{2C_C-1})}{\log\left(2 C_0(1+C_B\bar d)[1\vee\frac{2 C_B C_C^2\bar d}{C_C-1}]\right)}.
\]
Set $\tilde \gamma = \frac{C_0 \|\rho\|_{\text{Lip}(d)}\bar{d}}{\gamma}$ and observe that
$C_B\bar{d}=\tilde{\gamma}e^{\tilde{\gamma}}$. In particular, we see that when $C_0\|\rho\|_{\text{Lip}(d)}/\gamma$ is close to 0
(e.g.\ either when $\rho$ is nearly constant or the almost sure convergence in~\eqref{eq:ascontraction} is very fast\footnote{Note
  from~\eqref{eq:ascontraction} that $C_0\geq 1$}, or more generally when the rate of contraction $\gamma$ is much larger than the
variation of the killing rate), $C_B$ is close to 0 and hence $\alpha$ is close to $\gamma \frac{\log(2)}{\log(2C_0)}$. In
particular, if $C_0= 1$, we recover the convergence rate of the non-penalized trajectories. This is not the case when using (A) and
Theorem~\ref{thm:main-compact}.
  % $\alpha$ is close to $\gamma$, the rate of contraction of the
  % unkilled trajectories. This is the best rate that can be expected in general. This is reminiscent of Proposition~\ref{prop:CFTK}.}

\begin{exa}[Iterated random functions with penalization]
  \label{ex:IRF}
Let $M$ be a measurable space, and consider a familly $(f_{\theta})_{\theta \in M}$  of  functions defined on $E$ such that
\[
a:=\sup_{\theta} \|f_\theta\|_{\text{Lip}(d)} < 1.%\sup_{x\neq y\in E}\frac{d(f_\theta(x),f_\theta(y))}{d(x,y)}<1.
\]
 We construct the Markov chain defined recursively by 
\[
X_{n+1} = f_{\theta_{n+1}}(X_n),
\] 
where $(\theta_n)_{n \geq 1}$ is an i.i.d.\ sequence of random variables with values in $M$. We penalize $X$ by a function $p=e^\rho$
with $\rho\in \text{Lip}(d)$. For all $x,y$ we couple $(X_n)_{n \geq 0}$  and $(Y_n)_{n \geq 0}$ using the same realisation of the sequence $(\theta_n)_{n \geq 1}$. In particular, for all $n \geq 0$,
\[
d(X_{n+1}, Y_{n+1}) = d(f_{\theta_{n+1}}(X_{n+1}),f_{\theta_{n+1}}(Y_{n+1})) \leq a d(X_n, Y_n).
\]
Therefore, \eqref{eq:ascontraction} is satisfied with $\gamma=-\log a$, so Assumptions (A'), (B) and (C) hold true, which
% with 
% \[
% C_C = \left(1 + \frac{\bar d \|p\|_{\text{Lip}(d)}}{p_0(1-a)}\exp\big(\frac{\|p\|_{\text{Lip}(d)} \bar d}{p_0(1-a)}\big)\right)^2.
% \]
% This
implies  the conclusion of Theorem \ref{thm:main-compact}

The example of the Bernouilli convolution given in the Introduction is the special case where $M = \{-1, +1\}$,
% and for some $a \in (0,1)$,
$E = [ -2, 2]$, $f_{\theta} : x \mapsto \frac{x}{2}+\theta$ and $(\theta_n)_{n \geq 1}$ is a sequence of i.i.d.\ Rademacher variables.
\end{exa}

\subsection{Iterated random contracting functions}
\label{sec:random-composition}
Let $M$ be a measurable space, and consider a family $(f_{\theta})_{\theta \in M}$ of Lipschitz functions from $E$ to $E$. We assume
that for all $\theta \in M$, $l_{\theta} := \|f_\theta\|_{\text{Lip}(d)} \leq 1$, but we do not assume as in Example~\ref{ex:IRF}
that $\sup_{\theta\in M}\ell_\theta<1$. We consider the Markov chain defined recursively by
\[
X_{n+1} = f_{\theta_{n+1}}(X_n),
\] 
where $(\theta_n)_{n \geq 1}$ is an i.i.d.\ sequence of random variables with values in $M$. We penalize $X$ by a function
$p=e^{-\rho}$ where $\rho \in \text{Lip}(d)$. We prove the following result:
\begin{prop}
  Assume that there exist two constants $c>0$ and $\kappa>0$ such that 
  \begin{align}
  	\label{eq:subkillingrateIRF}
  	\mathbb E_x(Z_n)\geq c\,e^{-\kappa n},\ \forall n\geq 0,\ \forall x\in E.
  \end{align}
If in addition $q:= \mathbb{P}( l_{\theta_1} = 1) < e^{ -(\kappa - \inf_E \rho )}$, then (A) holds true.
\end{prop}

 Note that this result is much stronger that what one could expect from Proposition~\ref{prop:CFTK} since, in the case
  where $\mathbb{P}( l_{\theta_1} = 1)=0$, (A) holds true whatever the value of $\kappa$. This result is also stronger that one could expect  from Proposition~\ref{prop:almostsure} since  $\mathbb{P}( l_{\theta_1} = 1)=0$ does not imply~\eqref{eq:ascontraction} in general.

 As noticed in Section~\ref{sec:coupling-faster-killing}, condition~\eqref{eq:subkillingrateIRF} is always satisfied with $\kappa = \sup_E \rho$. Another easy way to check~\eqref{eq:subkillingrateIRF} is as follows. Assume that there exists $\alpha \in (0,1] $ such that, for all $x \in E$, $\mathbb{E}_x(p(X_1)) \geq \alpha$. Then, the Markov property implies that $\mathbb{E}_x( Z_n) \geq \alpha \mathbb{E}_x(Z_{n-1})$, and thus~\eqref{eq:subkillingrateIRF} holds true with $c = \alpha^{-1} e^{ - \sup_E \rho}$ and $\kappa =- \log(\alpha)$.

For all $x,y$ we couple $(X_n)_{n \geq 0}$  and $(Y_n)_{n \geq 0}$ using the same realisation of the sequence $(\theta_n)_{n \geq 1}$. In particular, for all $n \geq 0$,
\[
d(X_{n+1}, Y_{n+1}) = d(f_{\theta_{n+1}}(X_{n+1}),f_{\theta_{n+1}}(Y_{n+1})) \leq l_{\theta_{n+1}} d(X_n, Y_n).
\]
Hence, to prove (A), it is sufficient to prove that for some $a \in (0,1)$ and $C >0$, for all $n \geq 1$,
\[
\E \left[ \left( \prod_{k=1}^n l_{\theta_k} \right) p(X_0) \ldots p(X_{n-1}) \right] \leq C a^n \E\left[ p(X_0) \ldots p(X_{n-1}) \right]
\] 
We let $\nu_n = l_{\theta_n}$. Note that $(\nu_n)_{n \geq 0}$ is an i.i.d.\ sequence of random variables  with values in $[0,1]$. Let $\varepsilon, \delta > 0$, and set 
\[
A_n(\varepsilon) = \left\{ \frac{1}{n} \sum_{k=1}^n \nu_k \leq 1 - \varepsilon \right\}, \quad B_n(\delta) = \{ 1 \leq k \leq n : \: \nu_k \leq \delta\}
\]
On the event $A_n(\varepsilon)$, for all $ 1 - \varepsilon < \delta < 1$, one can prove that $|B_n(\delta)| \geq n(1 - \frac{1- \varepsilon}{\delta})$. Therefore, since $\nu_k \leq 1$ for all $k \geq 1$, on  $A_n(\varepsilon)$ one has
\begin{equation}
\label{eq:onA_n}
\prod_{k=1}^n \nu_k \leq \prod_{k \in B_n(\delta)} \nu_k \leq \delta^{n\left(1 - \frac{1-\varepsilon}{\delta}\right)}.
\end{equation}
Moreover, Cramer's Theorem implies that for all $n \geq 1$ and all $\varepsilon \in [0,1]$, 
\[
\P \left( A_n(\varepsilon)^{\mathrm{c}} \right) \leq e^{ - n \Lambda^*(1-\varepsilon)}
\]
where $\Lambda^*$ is the Fenchel-Legendre transform of the {cumulant} generating function of $\nu_1$, defined for all $x \in \R$ as
\[
\Lambda^*(x) = \sup_{t \geq 0} \{ tx - \ln \E ( e^{ t \nu_1})  \}.
\]
We prove the following lemma.
\begin{lem}
\label{lem:FL1}
We assume that $q:= \P( \nu_1 =1) \in [0,1)$. Then
\[
\Lambda^*(1) \geq \ln(1/q) \in (0, +\infty].
\]
In particular, 
\begin{equation}
\label{eq:FLeps}
\liminf_{ \varepsilon \to 0} \Lambda^*(1 - \varepsilon) \geq \ln(1/q).
\end{equation}
\end{lem}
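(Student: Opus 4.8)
The plan is to estimate $\Lambda^*(1)$ directly from its definition. Write $\Lambda(t):=\ln\E(e^{t\nu_1})$ for the cumulant generating function, which is finite for every $t\geq 0$ since $0\leq\nu_1\leq 1$, so that $\Lambda^*(1)=\sup_{t\geq 0}\{t-\Lambda(t)\}$. The elementary identity $t-\Lambda(t)=-\ln\bigl(e^{-t}\E(e^{t\nu_1})\bigr)=-\ln\E\bigl(e^{-t(1-\nu_1)}\bigr)$ reduces everything to the asymptotics of $\E\bigl(e^{-t(1-\nu_1)}\bigr)$ as $t\to+\infty$.

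First I would observe that $e^{-t(1-\nu_1)}\leq 1$ and $e^{-t(1-\nu_1)}\to\1_{\{\nu_1=1\}}$ pointwise as $t\to+\infty$, so by dominated convergence $\E\bigl(e^{-t(1-\nu_1)}\bigr)\to\P(\nu_1=1)=q$. Hence $t-\Lambda(t)\to-\ln q=\ln(1/q)$ as $t\to+\infty$ (to be read as $+\infty$ when $q=0$), and since $\Lambda^*(1)$ is a supremum over $t\geq 0$ it is bounded below by this limit, which gives $\Lambda^*(1)\geq\ln(1/q)$; moreover $\ln(1/q)\in(0,+\infty]$ because $q<1$.

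For the ``in particular'' statement, rather than appealing to lower semicontinuity of $\Lambda^*$ I would argue by hand: for each fixed $t\geq 0$ we have $\Lambda^*(1-\varepsilon)\geq t(1-\varepsilon)-\Lambda(t)$, so letting $\varepsilon\to 0$ yields $\liminf_{\varepsilon\to 0}\Lambda^*(1-\varepsilon)\geq t-\Lambda(t)$; taking the supremum over $t\geq 0$ then gives $\liminf_{\varepsilon\to 0}\Lambda^*(1-\varepsilon)\geq\Lambda^*(1)\geq\ln(1/q)$, which is the claim.

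There is essentially no serious obstacle; the only slightly delicate point is the bookkeeping in the degenerate case $q=0$, where $\ln(1/q)$ is understood as $+\infty$ and one checks that $e^{-t(1-\nu_1)}\to 0$ almost surely, so $t-\Lambda(t)\to+\infty$, consistent with the stated inequality.
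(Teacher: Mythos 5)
Your proof is correct, and it follows a cleaner route than the paper's. The key computation is the same in spirit — both look at the behaviour of $t-\Lambda(t)$ as $t\to\infty$ — but the execution differs in three places. First, the paper establishes that $t\mapsto t-\Lambda(t)$ is increasing (by differentiation and the observation $\E(\nu_1 e^{t\nu_1})<\E(e^{t\nu_1})$) in order to identify $\Lambda^*(1)$ with the limit; you avoid this entirely by noting that a supremum is always at least any limit along a subsequence, which is all that is needed for a lower bound. Second, the paper bounds the limit from below by splitting the expectation via the CDF $F$ and sending $t\to\infty$ then $\varepsilon\to 0$; you instead rewrite $t-\Lambda(t)=-\ln\E(e^{-t(1-\nu_1)})$ and apply dominated convergence, which gives the exact limit $\ln(1/q)$ in one step, a slightly stronger conclusion obtained more directly. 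Third, for the ``in particular'' statement the paper invokes lower semicontinuity of $\Lambda^*$, whereas you give a short self-contained argument by exchanging the $\liminf$ in $\varepsilon$ with the supremum in $t$; this keeps the proof elementary and independent of any abstract property of the Fenchel--Legendre transform. Your handling of the degenerate case $q=0$ is also correct.
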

\begin{proof}
  Note that $\Lambda^*(1) =\sup_{t\geq 0}h(t)$, where $h(t) = t - \ln \E( e^{t \nu_1})$ for all $t \geq 0$. Since $\nu_1$ is bounded,
  we can differentiate to get $h'(t) = 1 - \frac{\E( \nu_1 e^{t \nu_1})}{\E ( e^{t \nu_1}))}$. Since $q < 1$,
  $\E( \nu_1 e^{t \nu_1}) < \E( e^{t \nu_1})$, and thus $h'(t) > 0$ for all $t \geq 0$. Therefore,
  $\Lambda^*(1) = \lim_{t \to \infty} h(t)$. Furthermore, if $F$ denotes the cumulative distribution function of $\nu_1$, we have for
  all $\varepsilon \in (0,1)$,
  \begin{align*}
    h(t) & \geq t - \ln \left( e^t (1 - F(1 - \varepsilon)) + e^{ (1- \varepsilon) t} F(1- \varepsilon) \right)\\
         & = {-}\ln \left( (1 - F(1 - \varepsilon)) + e^{ - \varepsilon t} F(1- \varepsilon) \right)
  \end{align*}
  This entails that for all $\varepsilon \in (0,1)$, $\lim_{t \to \infty} h(t) \geq {-}\ln( (1 - F(1 - \varepsilon))$. Letting
  $\varepsilon \to 0$ yields $\Lambda^*(1) \geq \ln (1/q)$. Equation \eqref{eq:FLeps} follows from the fact that $\Lambda^*$ is lower
  semicontinuous.
\end{proof}
We can now prove (A). We decompose: for all $\varepsilon \in (0,1)$,

\begin{align*}
\E \left[ \left( \prod_{k=1}^n \nu_k \right) p(X_0) \ldots p(X_{n-1}) \right] = \E & \left[\1_{A_n(\varepsilon)} \left( \prod_{k=1}^n \nu_k \right) p(X_0) \ldots p(X_{n-1}) \right]
\\
&  + \E \left[\1_{A^c_n(\varepsilon)} \left( \prod_{k=1}^n \nu_k \right) p(X_0) \ldots p(X_{n-1}) \right]
\end{align*}
By Equation \eqref{eq:onA_n}, the first term of the right hand side in bounded above by 
\[
\delta^{n\left(1 - \frac{1-\varepsilon}{\delta}\right)} \E\left[ p(X_0) \ldots p(X_{n-1}) \right].
\]
The second term is bounded above by 
\[
e^{-n\inf_{y\in E}\rho(y)}\P \left( A_n(\varepsilon)^{\mathrm{c}} \right) \leq e^{ - n (\Lambda^*(1-\varepsilon)+\inf_{y\in E}\rho(y))}.
\]
By Lemma \ref{lem:FL1} and the fact that $q < e^{-(\kappa - \inf_E \rho )}$, one can choose $\varepsilon > 0$ such that  $\chi  := \Lambda^*(1-\varepsilon) - (\kappa - \inf_E \rho ) > 0$. Therefore,
\[
 e^{-n\inf_{y\in E}\rho(y)} \P \left( A_n(\varepsilon)^{\mathrm{c}} \right) \leq e^{ - n (\chi+\kappa)}.
\]
Since we assumed in~\eqref{eq:subkillingrateIRF} that $\E \left[  p(X_0) \ldots p(X_{n-1}) \right] \geq c e^{-n \kappa}$, we conclude that
\[
\E \left[ \left( \prod_{k=1}^n \nu_k \right) p(X_0) \ldots p(X_{n-1}) \right]  \leq \left[ \delta^{n\left(1 - \frac{1-\varepsilon}{\delta}\right)} +  \frac{1}{c} e^{ - n \chi} \right] \E \left[  p(X_0) \ldots p(X_{n-1}) \right],
\]
which entails (A).

\subsection{Switched dynamical systems}
\label{sec:switched}

We have seen in the last section how to check Condition (A) for processes satisfying a property of almost sure exponential
coupling. In many situations, for example for PDMP, a property of almost sure coupling occurs only after a stopping time.

\subsubsection{General result}
\label{sec:general-result-PDMP}

\newcommand{\ES}{{\mathcal E}}
\newcommand{\cce}{{c_\ES}}
\newcommand{\tce}{{t_\ES}}
We consider a switched dynamical system $(X_t,I_t)_{t\in\mathbb{R}_+}$, where the environment $I_t$ is an irreducible continuous time Markov
chain on a finite state space $\ES$ and $X_t\in\mathbb{R}^k$  is solution to 
\[
\dot{X}_t=F_{I_t}(X_t),
\]
where, for all $i\in \ES$, $F_i$ is a vector field in $\mathbb{R}^k$
such that 
\begin{equation}
  \label{eq:monotonie}
  \langle F_i(x) - F_i(y), x-y \rangle \leq - \gamma \| x -  y \|^2
\end{equation}
for some constant $\gamma>0$ which does not depend on $i\in\ES$. 

It is not hard to check that, for any $R$ large enough, the closed ball $\|x\| \leq R$ is positively invariant for the flows generated by the
$F_i$. Hence, we can restrict the state space of the  process to that ball, say $M$, times the space of the environment $\ES$. We
let $d$ the distance on $E = M \times \mathcal E$ be defined as 
\begin{equation}
\label{eq:distance-PDMP}
d((x,i),(y,j)) = \1_{i \neq j} + \1_{i = j}\frac{\|x-y\|}{2R},\ \forall x,y\in M,\ i,j\in \ES,
\end{equation}
where $2R$ is the diameter of $M$. We consider the penalization
\begin{equation}
\label{eq:hyp-killing-a.s.PDMP}
Z_t=\exp\left(-\int_0^t\rho(X_s,I_s)\,ds\right)  .
\end{equation}
with $\rho: E \to \mathbb R_+$ a Lipschitz function. 

We consider the usual independent Markov coupling: for all $x,y\in M$ and $i,j\in\ES$, define $\P_{(x,i),(y,j)}$ as the joint law
of $(X_t,I_t,Y_t,J_t)_{t\geq 0}$, where $(X_t,I_t)_{t\geq 0}$ is distributed as $\P_{(x,i)}$, $(Y'_t,J'_t)_{t\geq 0}$ is distributed
as $\P_{(y,j)}$ and independent of $(X_t,I_t)_{t\geq 0}$, and define
\[
T_0:=\inf\left\{t\geq 0: I_t=J'_t\right\}
\]
and, for all $t\geq 0$,
\begin{equation}
\label{eq:indep-coupling}
J_t=J'_t\1_{T_0>t}+I_t\1_{T_0\leq t}\quad\text{and}\quad  Y_t=Y'_t\1_{T_0>t}+\left(Y'_{T_0}+\int_{T_0}^t F_{I_s}(Y_s)ds\right)\1_{T_0\leq t}.  
\end{equation}
In particular, $I_t=J_t$ for all $t\geq T_0$ almost surely.

We can now state the main result of this section.

\begin{prop}
    \label{prop:PDMP}
    Condition (A) holds true {with the above coupling measure}.
\end{prop}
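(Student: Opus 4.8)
The plan is to verify Assumption~(A) directly for the independent coupling, treating the cases $i=j$ and $i\ne j$ separately, the decisive input being a uniform exponential tail bound for the coalescence time $T_0$ of the environments that holds conditionally on the whole trajectory of $(X,I)$. As a preliminary I would note that the coupled process $(X_t,I_t,Y_t,J_t)_{t\ge0}$ is Markovian: for $t<T_0$ it evolves as two independent copies of the switched system, while for $t\ge T_0$ one has $I_t=J_t$ and the two $\R^k$-components are driven by the common field $F_{I_t}$; since $\{I_t=J_t\}=\{t\ge T_0\}$, the current state determines which regime applies and in each regime the evolution depends only on the current state.

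The central estimate is the following: there are constants $C\ge1$ and $\lambda>0$, depending only on the environment chain, such that $\P(T_0>s\mid\sigma(I))\le C e^{-\lambda s}$ for all $s\ge 0$, uniformly in the trajectory of $I$ and in $(x,i),(y,j)$, where $\sigma(I)$ denotes the $\sigma$-field generated by the whole path of $I$. To prove it I would use that $I$ is an irreducible continuous-time Markov chain on the finite set $\ES$, so its time-one transition kernel has only positive entries and $\cce:=\min_{a,b\in\ES}\P(I_{1}=b\mid I_0=a)>0$; since $J'$ is independent of $(X,I)$, conditioning on $\sigma(I)$ does not alter the law of $J'$, and on the event $\{T_0>n\}$ the Markov property of $J'$ gives $\P(J'_{n+1}=I_{n+1}\mid\mathcal F^{J'}_n\vee\sigma(I))\ge\cce$, whence $\P(T_0>n+1\mid\sigma(I))\le(1-\cce)\,\P(T_0>n\mid\sigma(I))$ and therefore $\P(T_0>s\mid\sigma(I))\le(1-\cce)^{\lfloor s\rfloor}\le C e^{-\lambda s}$ with $C=1/(1-\cce)$ and $\lambda=\ln(1/(1-\cce))>0$. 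This conditioning argument, and checking that it does not interfere with the Markov property of $J'$, is the step I expect to require the most care; the rest is routine.

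In the case $i=j$ one has $T_0=0$, hence $I_t=J_t$ for all $t$ and $\dot X_t-\dot Y_t=F_{I_t}(X_t)-F_{I_t}(Y_t)$; by~\eqref{eq:monotonie}, $\frac{d}{dt}\|X_t-Y_t\|^2\le-2\gamma\|X_t-Y_t\|^2$, so $\|X_t-Y_t\|\le e^{-\gamma t}\|x-y\|$ pathwise, and since $\E_{(x,i)}G_t^X=1$ we get $\E_{(x,i),(y,i)}[G_t^X\,d((X_t,I_t),(Y_t,J_t))]=\E[G_t^X\,\|X_t-Y_t\|/(2R)]\le e^{-\gamma t}\,d((x,i),(y,i))$, that is (A) with $C_A=1$, $\gamma_A=\gamma$ in this case.

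In the case $i\ne j$, $d((x,i),(y,j))=1$, and I would bound $d((X_t,I_t),(Y_t,J_t))\le\1_{T_0>t}+\1_{T_0\le t}\,e^{-\gamma(t-T_0)}$: for $t<T_0$ the environments differ ($J_t=J'_t\ne I_t$) so $d=1$, whereas for $t\ge T_0$ the environments agree, both $\R^k$-components are driven by $F_I$ on $[T_0,t]$, and $\|X_{T_0}-Y_{T_0}\|\le2R$ (diameter of $M$), so~\eqref{eq:monotonie} yields $d\le e^{-\gamma(t-T_0)}$. Since $Z_t^X=e^{-\int_0^t\rho(X_s,I_s)\,ds}$ is measurable with respect to the path of $(X,I)$, $G_t^X=Z_t^X/\E_{(x,i)}Z_t$, and $J'$ is independent of $(X,I)$, conditioning on the trajectory of $(X,I)$ and using the tail bound gives $\E[G_t^X\1_{T_0>t}]\le C e^{-\lambda t}$ and, splitting the other term on $\{T_0\le t/2\}$ versus $\{T_0>t/2\}$ (so that $e^{-\gamma(t-T_0)}\le e^{-\gamma t/2}$ on the first event and $e^{-\gamma(t-T_0)}\le1$ on the second), $\E[G_t^X\1_{T_0\le t}e^{-\gamma(t-T_0)}]\le e^{-\gamma t/2}+C e^{-\lambda t/2}$. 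Summing these estimates, $\E[G_t^X\,d((X_t,I_t),(Y_t,J_t))]\le C_A e^{-\gamma_A t}=C_A e^{-\gamma_A t}\,d((x,i),(y,j))$ with $\gamma_A:=\tfrac12(\gamma\wedge\lambda)$ and $C_A$ chosen large enough (in particular $C_A\ge1$ and $\gamma_A\le\gamma$, which also covers the case $i=j$). This establishes Assumption~(A).
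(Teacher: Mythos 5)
Your proof is correct, and it reaches the same destination as the paper's by a slightly different route: where the paper establishes by induction the weighted estimate $\E_{(u,v)}[\1_{T_0>n}Z_t^X]\le(1-c)^n\,\E_{(x,i)}Z_t$ (interleaving at each step the Markov property of the full coupled process with the uniform ellipticity of the time-one kernel of the environment chain), you instead isolate a standalone conditional tail bound $\P(T_0>s\mid\sigma(I))\le Ce^{-\lambda s}$, uniform in the path of $I$ and in the starting points, and then recover the paper's estimate in one line by the tower property and the independence of $J'$ from $(X,I)$. The content is the same — both rest on the irreducibility of the finite environment chain and the independence built into the coupling — but your factoring cleanly decouples the meeting-time estimate (a pure statement about two independent finite-state Markov chains) from the penalization, whereas the paper threads $Z_t^X$ through each inductive step. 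Your case split for $i\ne j$ also differs cosmetically: you first bound $d(U_t,V_t)\le\1_{T_0>t}+\1_{T_0\le t}e^{-\gamma(t-T_0)}$ pathwise and then split on $T_0\lessgtr t/2$, while the paper splits first (its \eqref{eq:decoupalage-subtil}) and bounds the distance on $\{T_0\le t/2\}$; the resulting constants are essentially the same, $\gamma_A$ of the order of $\tfrac12(\gamma\wedge\lambda)$.
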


\begin{rem}
  Since $R$ can be chosen arbitrarily large, Proposition~\ref{prop:PDMP} shows that the quasi-stationary distribution of
  Theorem~\ref{thm:main-compact} has compact support and that all compactly supported initial distribution are in its domain of
  attraction. In particular, it is the only compactly supported quasi-stationary distributions.
\end{rem}

\begin{proof}[Proof of Proposition~\ref{prop:PDMP}]
  To shorten notation, we write $u = (x,i)$, $v=(y,j)$ for generic points in $E$ and $U_t = (X_t, I_t)$, $V_t = (Y_t, J_t)$ for the
  processes starting from these points. We first consider the case $i=j$. In this situation, we have,
  $\mathbb P_{(x,i),(y,i)}$-almost surely, for all $t \geq 0$, $I_t = J_t$ and thus
  \begin{align}
    \label{eq:youpi}
    d(U_t,V_t) = \frac{\| X_t - Y_t\|}{2R} \leq e^{-\gamma t}\,\frac{\|x-y\|}{2R} = e^{-\gamma t} d(u,v)
  \end{align}
  and hence (A) holds true in that case. Now, we consider the case where $i \neq j$. We write
  \begin{equation}
    \label{eq:decoupalage-subtil}
    \E_{(u,v)} ( G_t^U d(U_t, V_t) ) \leq \E_{(u,v)} ( G_t^U d(U_t, V_t) \1_{T_0 \leq t /2}) + \E_{(u,v)} ( G_t^U \1_{T_0> t /2}).
  \end{equation}
  Note that on the event $\{T_0 \leq t /2\}$, 
  \begin{align*}
    d(U_t,V_t) & = \frac{\|X_t - Y_t\|}{2R} \leq e^{ - \gamma ( t - T_0)} \frac{\|X_{T_0} - Y_{T_0}\|}{2R} \leq e^{ - \gamma t/2} = e^{ - \gamma t/2} d(u,v).
  \end{align*}
  Hence, the first term in the right-hand side of \eqref{eq:decoupalage-subtil} is bounded by $e^{ - \gamma t/2} d(u,v)$. We now show
  that the second term is bounded by $e^{ - \delta t/2}$ for some $\delta > 0$. To do so, we prove by induction that, for all
  {$n\geq 1$ and all $t\geq n$,
  \begin{equation}
    \label{eq:recurrence-a.s.PDMP}
    \mathbb E_{(u,v)}[\mathbbm{1}_{T_0> n}Z_t^X]\leq (1-c)^n\,\mathbb E_{(x,i)} Z_t    
  \end{equation}}
  for some constant $c\in(0,1)$. Using the independence between $(X_t,I_t)_{t\geq 0}$ and $(Y'_t,J'_t)_{t\geq 0}$, we have {
  \begin{align*}
    \E_{(u,v)}[ Z_t^X \1_{T_0 > 1} ]
    & = \sum_\ell \E_{(u,v)} [ Z_t^X \1_{I_{1} = \ell} \1_{ J_{1} \neq \ell} ] \\
    & \leq \sum_\ell \E_{(u,v)} [ Z_t^X \1_{I_{1} = \ell} \1_{ J'_{1} \neq \ell} ] \\
    % & =  \sum_k \E_{u,v} \left[ Z_{1}(X) \1_{I_{1} = k} \1_{ J_{1} \neq k} \E_{(X_{1}, k)} \left( Z_{t - 1}(X) \right) \right]\\
    % & = \sum_k \E_{u,v} \left[ Z_{1}^X \1_{I_{1} = k}  \E_{(X_{1}, k)} \left( Z_{t - 1}(X) \right) \right]\mathbb{P}_j\left( J_{1} \neq k \right)
    & = \sum_\ell \E_{(u,v)} [ Z_{t}^X \1_{I_{1} = \ell}]\mathbb{P}_j\left( J'_{1} \neq \ell \right).
  \end{align*}
  Let $ c = \inf_{j,\ell} \mathbb{P}_j( J'_{1} = \ell) > 0$,} then
  \[
    \E_{(u,v)}[ Z_t^X \1_{T_0 > t_1} ] %\leq (1-c) \sum_k\E_{u,v} \left[ Z_{t_1}(X) \1_{I_{t_1} = k}  \E_{(X_{t_1}, k)}
                                %\left( Z_{t - t_1}(X) \right) \right] =
    \leq (1-c)\E_{(u,v)} Z_t^X,
  \]
  which proves~\eqref{eq:recurrence-a.s.PDMP} for {$n=1$}. Now let us assume~\eqref{eq:recurrence-a.s.PDMP} and prove it
  for {$n+1$:}
  applying the Markov property at time {$n$ twice, we deduce that, for all $t\geq n+1$,
  \begin{align*}
    \mathbb E_{(u,v)}[\mathbbm{1}_{T_0> n+1}Z_t^X]
    & =\mathbb E_{(u,v)}[\mathbbm{1}_{T_0> n}Z_{n}^X\mathbb
      E_{(X_{n},I_n),(Y_{n},J_n)}(\mathbbm{1}_{T_0>1}Z_{t-n}^X)] \\
    % \\ & \leq (1-c)\, \mathbb E_{(x,i),(y,j)}\left[\mathbbm{1}_{T_0> k}Z_{k}^X\mathbb E_{(X_{k},I_k)}(Z_{t-k})\right] \\
    & =(1-c)\,\mathbb E_{(u,v)}[\mathbbm{1}_{T_0> n}Z_{t}^X]\leq (1-c)^{n+1}\,\mathbb E_{(x,i)} Z_t.
  \end{align*}}
  Hence~\eqref{eq:recurrence-a.s.PDMP} is proved and the proof of Proposition \ref{prop:PDMP} is completed.
\end{proof}

\begin{rem} Our assumption that $\ES$ is finite can be easily relaxed to the situation where
    $\ES$ is a Polish space, assuming that there exist constants $\cce,\tce>0$ such that, for all $i,j\in\mathcal E$,
    \begin{align}
    \mathbb{P}_i(I_{\tce}\in\cdot)\geq \cce\,\mathbb{P}_j(J_{\tce}\in\cdot)
    \end{align}
    and that there exists a coupling $\mathbb P_{i,j}$ of $\mathbb P_i$ and $\mathbb P_j$ such that $T_0=\inf\{t\geq 0,\ I_t=J_t\}$ satisfies
    \begin{align}
    \inf_{i,j} \mathbb P_{i,j}(T_0\leq \tce)\geq \cce\text{ and }\inf_{i,j} \mathbb P_{i,j}(I_t=J_t\ \forall t\geq T_0)=1.
    \end{align}
\end{rem}%

\begin{exa}
  We consider the particular case where $k=2$, $F^1(x)=Ax$ and $F^2(x)=A(x-a)$, for some $a\in\mathbb{R}^2$ and some matrix $A$ whose
  eigenvalues have negative real parts. This example was studied in~\cite{BMZIHP}, where the authors check that~\eqref{eq:monotonie}
  is satisfied. Hence Proposition~\ref{prop:PDMP} holds true. In the particular case where $a$ is an eigenvector of $A$, it is easy
  to check that the line $a\mathbb{R}$ (resp.\ the set $\mathbb{R}^2\setminus (a\mathbb{R})$) is invariant for the flows of $F^1$ and
  $F^2$, and hence $a\mathbb{R}\times\ES$ (resp.\ $(\mathbb{R}^2\times\ES)\setminus (a \mathbb{R}\times\ES)$) is invariant for the
  PDMP. In particular, taking an initial condition $x\in a\mathbb{R}$ and $y\not\in a\mathbb{R}$, the total variation distance
  between the conditional laws of the two processes is constant equal to $1$. In contrast, our results imply convergence in
  Wasserstein distance.
\end{exa}

% \begin{rem}
%   \textcolor{blue}{[\`A mon avis, \`a garder. Soit on d\'etaille l'exemple pr\'ec\'edent, soit on met les formules dans cette
%     remarque. Je penche pour la seconde option.]} Note that, in the previous example, convergence of conditional distributions cannot
%   hold for the total variation distance. An example is given in~\textcolor{violet}{[Michel, Malrieu, Zitt]}, which corresponds to the
%   case where $d = 2$ and $a$ is an eigenvector of $A$. Then, the set $\Gamma = [0,a]$ is invariant by both flows and attracts every
%   trajectory..
% \end{rem}

\subsubsection{ Penalization might change the stability of an equlibrium}

 In this section, we give an example where the non-penalized process admits several stationary distributions, one of which being
unstable. Our purpose is to illustrate that adding appropriate penalization may reverse the stability of this equlibirum. More
specifically, we consider a switched dynamical system from Hurth and Kuehn in \cite{HK20} which may admit up to three ergodic
stationary distributions.
  
Consider the process $X_t \in \mathbb{R}$ solution to
\[
\dot{X_t} = F_{I_t}(X_t),
\]
where $I_t$ is a Markov chain on $\mathcal{E}=\{-,+\}$ with transition rates matrix
\[
Q = \begin{pmatrix}
- 1 & 1\\
1 & -1
\end{pmatrix}
\]
and 
\[
F_-(x) = p_- x - x^3, \quad F_+(x) = p_+ x - x^3,
\]
for some $p_+ >  0 > p_-$. The state space of the process is $E = [ - \sqrt{p_+}, \sqrt{p_+}] \times \mathcal{E}$. The following dichotomy is proven in \cite{HK20}, using linearisation results of \cite{BS19}:
\begin{prop} (Theorem~3.1 in~\cite{HK20})
  \label{prop:HK}
\begin{description}
\item[\textmd{(i)}] If $p_+ \leq - p_-$, then for all $(x,i) \in E$,  $X_t$  converges to $0$ $\P_{(x,i)}$-almost surely.
\item[\textmd{(ii)}] If $p_+ >  - p_-$, then $(X_t, I_t)_{ t \geq 0}$ admits 3 ergodic measures: $\delta_0 \otimes (\frac{1}{2}\delta_- +\frac{1}{2}\delta_+) $, $\Pi_-$ and $\Pi_+$, where $\Pi_-( [-\sqrt{p_+}, 0) \times \mathcal{E} ) = 1$ and $\Pi_+( (0, \sqrt{p_+}] \times \mathcal{E} ) = 1$. Moreover, if $x > 0$, then $(X_t, I_t)$ converges in law to $\Pi_+$ while if $x < 0$, then $(X_t, I_t)$ converges in law to $\Pi_-$.
\end{description}
\end{prop}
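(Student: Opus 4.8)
This statement concerns the free (non-penalized) switched flow $\dot X=F_{I}(X)$ with $F_\pm(x)=p_\pm x-x^3$, for which the results of Section~\ref{sec:results} are not directly relevant; I would instead follow the linearization approach for random switching between vector fields with a common zero developed in~\cite{BS19} (as carried out in~\cite{HK20}). The plan is first to record the structural facts: since $F_-(0)=F_+(0)=0$ and $F_+(\pm\sqrt{p_+})=0$, the three sets $\{0\}\times\mathcal E$, $(0,\sqrt{p_+}]\times\mathcal E$ and $[-\sqrt{p_+},0)\times\mathcal E$ are invariant for the PDMP; on the positive half-line $F_+$ flows towards $\sqrt{p_+}$ while $F_-$ flows towards $0$ (symmetrically on the negative half-line), so the restricted dynamics is well defined and confined. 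Consequently the product measure $\delta_0\otimes\pi_I$, where $\pi_I=(\frac12,\frac12)$ is the invariant law of $I$ (uniform by symmetry of $Q$), is always an ergodic stationary measure, and any other ergodic stationary measure must be carried by one of the two open half-lines.

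The key quantity is the Lyapunov exponent of the linearization at the common zero $0$. For $X_0\neq0$, setting $Y_t=\log|X_t|$ gives $\dot Y_t=p_{I_t}-X_t^2$, whence $Y_t=Y_0+S_t-\int_0^t X_s^2\,ds$ with $S_t:=\int_0^t p_{I_s}\,ds$, and the ergodic theorem for the irreducible finite chain $I$ yields
\[
\Lambda:=\frac{p_++p_-}{2}=\lim_{t\to\infty}\frac{S_t}{t}\quad\text{almost surely}.
\]
For part (i), assume $p_+\le-p_-$, i.e.\ $\Lambda\le0$. When $\Lambda<0$, from $Y_t\le Y_0+S_t$ one gets $\limsup_t Y_t/t\le\Lambda<0$, hence $Y_t\to-\infty$, i.e.\ $X_t\to0$ almost surely (and trivially when $X_0=0$). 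The boundary case $\Lambda=0$ is more delicate: $S_t$ is then only null recurrent, so $Y_t\le Y_0+S_t$ no longer forces $Y_t\to-\infty$, and one must exploit the strictly negative correction $-\int_0^t X_s^2\,ds$, showing that the excursions of $S$ to high levels always accumulate enough of this restoring term to drive $Y_t\to-\infty$ — equivalently, that $(Y_t,I_t)$ admits no invariant probability measure and escapes to $-\infty$. This is exactly the borderline analysis of~\cite{BS19}.

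For part (ii), assume $p_+>-p_-$, i.e.\ $\Lambda>0$, so that $0$ is repelling. On a fixed half-line the pair $(Y_t,I_t)$ solves $\dot Y_t=p_{I_t}-e^{2Y_t}$ on $(-\infty,\frac12\log p_+]\times\mathcal E$, and I would determine its stationary measure directly. Writing the stationary system for a density $(\mu_-,\mu_+)$ and using that the total flux is constant, one finds $(p_--e^{2Y})\mu_-+(p_+-e^{2Y})\mu_+\equiv0$; this expresses $\mu_-$ through $\mu_+$ and reduces the system to a single first-order linear ODE, whose solution behaves like $e^{cY}$ as $Y\to-\infty$ with $c=\frac{p_++p_-}{-p_-p_+}>0$, hence is integrable. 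Thus there is a unique (up to normalisation) invariant probability measure for $(Y_t,I_t)$, and pushing it forward by $x=\pm e^{Y}$ yields $\Pi_+$ on $(0,\sqrt{p_+}]$ and $\Pi_-$ on $[-\sqrt{p_+},0)$. Uniqueness of the invariant measure carried by each half-line and convergence in law of $(X_t,I_t)$ to $\Pi_\pm$ from every starting point with $x\gtrless0$ then follow from positive Harris recurrence of $(Y_t,I_t)$: open-set irreducibility holds because $F_+$ and $F_-$ allow one to move right and left everywhere on $(0,\sqrt{p_+})$, the transition kernel is absolutely continuous by the standard smoothing of PDMPs with random switching times (see~\cite{BMZIHP}), and the invariant probability measure just constructed rules out null recurrence/transience. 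Together with $\delta_0\otimes\pi_I$, this exhibits exactly the three announced ergodic measures and the claimed convergences.

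The main obstacle is twofold. The genuinely hard point is the borderline case $\Lambda=0$ in part~(i): the naive argument only gives $\limsup_t Y_t/t\le0$, and controlling the vanishing restoring term $-\int_0^t X_s^2\,ds$ along the null-recurrent excursions of $S_t$ to conclude $X_t\to0$ requires the finer Lyapunov-function machinery of~\cite{BS19}. A secondary, more technical point is the PDMP regularity/irreducibility input in part~(ii) — absolute continuity of the law of $(X_t,I_t)$ on the accessible interval and the resulting Doeblin-type minorisation — needed to upgrade the explicit invariant-density computation into uniqueness of the invariant measure and convergence in law.
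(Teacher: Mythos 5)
The paper does not prove this proposition: it is stated with the explicit attribution ``(Theorem~3.1 in~\cite{HK20})'' and the surrounding text says the dichotomy ``is proven in \cite{HK20}, using linearisation results of \cite{BS19}.'' So there is no internal proof to compare against; the relevant question is whether your reconstruction is a faithful sketch of that cited argument.

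It is. Passing to $Y_t=\log|X_t|$, identifying $\Lambda=(p_++p_-)/2$ as the Lyapunov exponent of the linearisation at the common zero via the ergodic theorem for $I$, and splitting the analysis on the sign of $\Lambda$ is precisely the mechanism of~\cite{BS19} as used in~\cite{HK20}. Your treatment of part~(ii) is also sound: the zero-flux relation $(p_--e^{2y})\mu_-+(p_+-e^{2y})\mu_+\equiv 0$ follows from integrating the stationary Fokker--Planck system and letting $y\to-\infty$, it reduces the system to a scalar first-order linear ODE, and the resulting tail $\mu_+\sim e^{cy}$ with $c=(p_++p_-)/(-p_+p_-)>0$ confirms integrability exactly when $\Lambda>0$. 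The Harris-recurrence/absolute-continuity ingredients you invoke (via~\cite{BMZIHP}-type smoothing) are standard for such one-dimensional switched flows. You have correctly flagged the two places where the sketch is not self-contained: the critical case $\Lambda=0$ in part~(i), where $Y_t\le Y_0+S_t$ alone is insufficient and one needs the finer analysis of~\cite{BS19} to show that the drift correction $-\int_0^t X_s^2\,ds$ forces $Y_t\to-\infty$ along the null-recurrent excursions of $S_t$; and the minorisation needed to upgrade existence of an invariant density to uniqueness and convergence in law. Neither is a flaw in your outline, but both are genuine technical content that your sketch defers to the cited literature, consistent with what the paper itself does.
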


Now we penalize $(X,I)$ by the function $\rho : E \to \mathbb{R}_+$ defined by 
\[
\rho(x,-) = 0, \quad \rho(x,+) = r  > 0.
\]
We write $\rho(-)$ (resp.\ $\rho(+)$) for $\rho(x,-)$ (resp.\ $\rho(x,-)$) for simplicity. We show that, provided $r$ is large enough
and regardless of the sign of $p_+ + p_-$, the penalized process admits $\delta_0 \otimes \mu$ as unique quasi-stationary
distribution, to which conditional distributions converge for the Wasserstein distance. Here $\mu$ denotes the unique
quasi-stationary distribution of $I_t$ penalized by $\rho$.

\begin{prop}
\label{prop:rlarge(A)}
If $r$ is large enough, then (A) is satisfied for the distance $d$ defined by \eqref{eq:distance-PDMP} with $M = [ - \sqrt{p_+}, \sqrt{p_+}]$ and $R = 2\sqrt{p_+}$.
\end{prop}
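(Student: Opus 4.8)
The plan is to keep the independent Markov coupling $\mathbb{P}_{(u,v)}$ already used in the proof of Proposition~\ref{prop:PDMP} and to exploit the fact that here $\rho$ depends only on the environment: with $L_t:=\int_0^t\mathbbm{1}_{I_s=+}\,ds$ one has $Z_t=e^{-rL_t}$, so $\mathbb{E}_uZ_t=\mathbb{E}_i[e^{-rL_t}]=:z_t(i)$ depends only on $i$, and $z_t$ is a functional of a two–state Feynman–Kac semigroup. Writing $u=(x,i)$, $v=(y,j)$, $U_t=(X_t,I_t)$, $V_t=(Y_t,J_t)$ as in that proof, I would treat the cases $i=j$ and $i\neq j$ separately, the common mechanism being that the destabilising rate $p_+$ of the flow $F_+$ in the environment $+$ must be beaten by the killing rate $r$ there, which happens once $r$ is large.

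For $i=j$ the environments coincide at all times under the coupling, and the one-sided estimates $(F_\pm(x)-F_\pm(y))(x-y)=p_\pm(x-y)^2-(x^3-y^3)(x-y)\le p_\pm(x-y)^2$ (using $(x^3-y^3)(x-y)\ge 0$) give $|X_t-Y_t|\le|x-y|\exp\!\big(\int_0^t a(I_s)\,ds\big)$ with $a(-)=p_-$, $a(+)=p_+$. Since $\int_0^t a(I_s)\,ds=p_-t+(p_+-p_-)L_t$ and $Z_t^U=e^{-rL_t}$, this yields, for $s:=r-p_++p_-$,
\[
  \mathbb{E}_{(u,v)}\big[G_t^U\,d(U_t,V_t)\big]\ \le\ d(u,v)\,e^{p_-t}\,\frac{\mathbb{E}_i[e^{-sL_t}]}{\mathbb{E}_i[e^{-rL_t}]}.
\]
Because $\mathcal{E}=\{-,+\}$ is finite, Perron--Frobenius for the (irreducible, essentially nonnegative) matrix $Q-\theta\,\mathrm{diag}(0,1)$ gives $\mathbb{E}_i[e^{-\theta L_t}]\asymp e^{\lambda(\theta)t}$ uniformly in $i$, with $\lambda(\theta)=\tfrac12\big(-(2+\theta)+\sqrt{4+\theta^2}\big)$ decreasing in $\theta$ and satisfying $0\le\lambda(s)-\lambda(r)\le 1/s$ when $s<r$. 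Hence the ratio above is $\le C e^{t/s}$, and for $r$ large enough (so that $s>1/|p_-|$) the right-hand side is $\le C\,e^{(p_-+1/s)t}\,d(u,v)$, which is (A) in this case.

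For $i\neq j$ I would copy the splitting of Proposition~\ref{prop:PDMP}: $\mathbb{E}_{(u,v)}[G_t^Ud(U_t,V_t)]\le \mathbb{E}_{(u,v)}[G_t^Ud(U_t,V_t)\mathbbm{1}_{T_0\le t/2}]+\mathbb{E}_{(u,v)}[G_t^U\mathbbm{1}_{T_0>t/2}]$, using $d\le 1$ in the second term. On $\{T_0\le t/2\}$ the environments coincide after $T_0$, so $|X_t-Y_t|\le|X_{T_0}-Y_{T_0}|\,e^{\int_{T_0}^t a(I_s)\,ds}\le \mathrm{diam}(M)\,e^{p_-(t-T_0)}e^{(p_+-p_-)(L_t-L_{T_0})}$; inserting $e^{-rL_t}$ combines with $e^{(p_+-p_-)L_t}$ into $e^{-sL_t}$, the leftover factor $e^{-(p_+-p_-)L_{T_0}}\le 1$ is dropped, and $e^{p_-(t-T_0)}\le e^{p_-t/2}$ on the event, so the first term is $\le C e^{p_-t/2}\,\mathbb{E}_i[e^{-sL_t}]/\mathbb{E}_i[e^{-rL_t}]\le C e^{(p_-/2+1/s)t}$, which decays for $r$ large (so $s>2/|p_-|$). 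For the second term I would reuse the induction of Proposition~\ref{prop:PDMP} verbatim: with $c=\inf_{j,\ell}\mathbb{P}_j(J'_1=\ell)>0$ one gets $\mathbb{E}_{(u,v)}[e^{-rL_t}\mathbbm{1}_{T_0>n}]\le(1-c)^n z_t(i)$, hence $\mathbb{E}_{(u,v)}[G_t^U\mathbbm{1}_{T_0>t/2}]\le(1-c)^{\lfloor t/2\rfloor}$. Since $d(u,v)=1$ when $i\neq j$, both terms have the required form; taking $C_A$ the maximum of the constants and $\gamma_A=\min\{|p_-|/2-1/s,\,-\tfrac12\log(1-c)\}>0$ (valid whenever $r>p_+-p_-+2/|p_-|$) proves (A).

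The main obstacle is the control of the ratio $\mathbb{E}_i[e^{-sL_t}]/\mathbb{E}_i[e^{-rL_t}]$ and, at a conceptual level, the verification that for $r$ large the exponents $p_-+1/s$ and $p_-/2+1/s$ are genuinely negative. This is precisely where the interplay between the expanding drift in environment $+$ and the strong killing there is used: surviving a killing rate $r$ in $+$ forces the conditioned process to spend asymptotically vanishing time in $+$ (reflected in $\lambda(r)\to-1$ and $\lambda(s)-\lambda(r)\to 0$ as $r\to\infty$), so the effective Lyapunov exponent along conditioned trajectories is close to $p_-<0$. Everything else — the coupling construction, the two-term split, and the coalescence induction — is already carried out in Proposition~\ref{prop:PDMP}, and the case $I=\mathbb{Z}_+$ does not arise here since this is a genuinely continuous-time example.
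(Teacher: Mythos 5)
Your proposal is correct and follows essentially the same route as the paper: the independent Markov coupling with the flow estimate $|\varphi^i_t(x)-\varphi^i_t(y)|\le e^{p_i t}|x-y|$, the two-term split at $T_0\le t/2$ versus $T_0>t/2$ with the coalescence induction reused from Proposition~\ref{prop:PDMP}, and Perron--Frobenius asymptotics for the two-state Feynman--Kac matrix to control the ratio $\E_i[e^{\int_0^t p_{I_s}\,ds}Z_t]/\E_i Z_t$ (which is exactly your $e^{p_-t}\E_i[e^{-sL_t}]/\E_i[e^{-rL_t}]$ and is exactly Lemma~\ref{lem:r-large-conctrat}). The only differences are presentational: you re-parametrize via $L_t$ and the shifted rate $s=r-p_++p_-$ and give an explicit threshold $r>p_+-p_-+2/|p_-|$ via the bound $\lambda(s)-\lambda(r)\le 1/s$, whereas the paper quotes~\cite{BardetGuerinEtAl2010} and passes to the limit $r\to\infty$ to see that the exponent becomes negative.
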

\begin{rem}
  \label{rem:last-example}
   In all the previous examples, we were able to prove (A) by using contraction properties of the non-penalized
    trajectories. In particular, in the previous examples, the non-penalized processes all converge uniformly in $L^1$-Wasserstein
    distance to a unique stationary distribution. The example of this section provides a situation where (A) holds true, without
    uniform contraction for the non-penalized process. More precisely, if $p_+ > - p_-$, Proposition~\ref{prop:HK} implies that there
    is no contraction in Wasserstein distance for $(X_t, I_t)$ on $E$. If we restrict the process to
    $(0,\sqrt{p_+}] \times \mathcal{E}$, there may be convergence of the non-penalized process in Wasserstein distance to $\Pi_+$,
    however non-uniform because the process takes arbitrary large time to leave a neighborhood of $\{0\}\times\mathcal{E}$ when $X_0$ is
    close to 0.
\end{rem}
\noindent To prove Proposition~\ref{prop:rlarge(A)}, we need the following Lemma, which is a consequence of results in
\cite{BardetGuerinEtAl2010}.
\begin{lem}
\label{lem:r-large-conctrat}
For $r$ large enough, there exist $C, \gamma > 0$ such that, for all $i \in \mathcal{E}$ and $t \geq 0$,
\begin{equation}
\E_i\left( e^{  \int_0^t p_{I_s} ds } Z_t \right) \leq C e^{ - \gamma t} \E_i \left( Z_t\right)
\end{equation}
\end{lem}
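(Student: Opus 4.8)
The plan is to first notice that the claimed estimate does not really involve $X$ at all. Since $\rho(x,-)=0$ and $\rho(x,+)=r$, we have $Z_t=\exp\big(-r\int_0^t\1_{I_s=+}\,ds\big)$, and $p_{I_s}\in\{p_-,p_+\}$, so that, writing $V_1(i)=p_i-r\1_{i=+}$ and $V_2(i)=-r\1_{i=+}$, the inequality to prove becomes
\[
\E_i\Big(e^{\int_0^t V_1(I_s)\,ds}\Big)\le C\,e^{-\gamma t}\,\E_i\Big(e^{\int_0^t V_2(I_s)\,ds}\Big),\qquad \forall i\in\mathcal E,\ t\ge0.
\]
This is a comparison between two Feynman--Kac semigroups of the irreducible two-state continuous-time Markov chain with generator $Q$, and it is precisely for such semigroups that I would invoke the ergodic estimates of~\cite{BardetGuerinEtAl2010}.

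For $k\in\{1,2\}$ set $A_k=Q+\mathrm{diag}(V_k)$. These $2\times2$ matrices have nonnegative off-diagonal entries and are irreducible (because $Q$ is), so by Perron--Frobenius each has a simple real dominant eigenvalue $\lambda_k$ with a strictly positive right eigenvector, separated by a spectral gap from the remaining eigenvalue. Consequently the map $t\mapsto e^{-\lambda_k t}\,(e^{tA_k}\1)(i)=e^{-\lambda_k t}\,\E_i\big(e^{\int_0^tV_k(I_s)ds}\big)$ is continuous and strictly positive on $[0,\infty)$ and converges as $t\to\infty$ to a strictly positive function of $i$; hence it is bounded above and below by positive constants, uniformly over $t\ge 0$ and $i\in\mathcal E$. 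This gives $\underline{c}_k\,e^{\lambda_k t}\le\E_i\big(e^{\int_0^tV_k(I_s)ds}\big)\le\overline{c}_k\,e^{\lambda_k t}$, and combining the two bounds yields the Lemma with $\gamma=\lambda_2-\lambda_1$ and $C=\overline{c}_1/\underline{c}_2$ --- provided $\lambda_1<\lambda_2$.

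It remains to check $\lambda_1<\lambda_2$ for $r$ large, and here I would simply exploit that $\mathcal E=\{-,+\}$ has two elements, so $\lambda_k$ is the larger root of an explicit quadratic: $\lambda_2=\tfrac12\big(-(r+2)+\sqrt{r^2+4}\big)$, while $\lambda_1=\tfrac12\big(-(a+b)+\sqrt{(a-b)^2+4}\big)$ with $a=1-p_-$ and $b=r+1-p_+$. Letting $r\to\infty$ one gets $\lambda_2\to-1$ and $\lambda_1\to-(1-p_-)$, so $\lambda_2-\lambda_1\to-p_->0$, which is strictly positive since $p_-<0$; hence $\lambda_1<\lambda_2$ for all $r$ large enough (alternatively, the Donsker--Varadhan formula $\lambda_k=\sup_\mu\{\mu(V_k)-\mathcal I(\mu)\}$ makes it transparent that, as $r\to\infty$, the maximizers charge state $+$ vanishingly, forcing $\lambda_1=\lambda_2+p_-+o(1)$). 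The only genuinely delicate point --- and the reason to appeal to~\cite{BardetGuerinEtAl2010} rather than argue by hand --- is obtaining the two-sided bound $\underline{c}_k\,e^{\lambda_k t}\le\E_i\big(e^{\int_0^tV_k(I_s)ds}\big)\le\overline{c}_k\,e^{\lambda_k t}$ \emph{uniformly for all $t\ge0$}, small $t$ included, and not merely as an asymptotic equivalence; once that is in hand, everything else is a two-by-two linear algebra computation.
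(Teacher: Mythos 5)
Your proof is correct and follows essentially the same route as the paper: apply the Perron--Frobenius two-sided Feynman--Kac estimate of~\cite{BardetGuerinEtAl2010} to the two potentials $V_1=p-\rho$ and $V_2=-\rho$, take $\gamma$ as the difference of the leading eigenvalues, and check that this difference tends to $-p_->0$ as $r\to\infty$. The only cosmetic difference is that the paper parametrizes the eigenvalue as an explicit function $\theta(q_+,q_-)$ of the two potential values, whereas you work directly with the matrices $A_k$; the constants, the limit computation, and the conclusion are identical.
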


\begin{proof}
For some $(q_+, q_-) \in \mathbb{R}^2$, let $\theta(q_+,q_-)$ be the greatest eigenvalue of $Q - \mathrm{Diag}(q_+,q_-)$, given by 
\[
\theta(q_+,q_-) = - 1 + \frac{q_+ + q_-}{2} + \frac{\sqrt{(q_+ - q_-)^2+4}}{2}.
\]
By Proposition 4.1 in \cite{BardetGuerinEtAl2010} (which is essentially a consequence of Perron-Frobenius Theorem), there exist $0 < C_1(q_+, q_-) < C_2(q_+,q_-)  < + \infty$ such that, for all $i \in \mathcal{E}$ and $t \geq 0$, 
\begin{equation}
\label{eq:BGM}
C_1(q_+, q_-) e^{ \theta(q_+,q_-) t} \leq \E_i \left( e^{ \int_0^t q_{I_s} ds} \right) \leq C_2(q_+, q_-) e^{ \theta(q_+,q_-) t}.
\end{equation}
This entails 
\[
\E_i\left( e^{  \int_0^t p_{I_s} ds } Z_t \right) = \E_i\left( e^{  \int_0^t p_{I_s} ds } e^{  - \int_0^t \rho(I_s) ds } \right) \leq C_2(p_+ - r, p_-) e^{ \theta(p_+ - r, p_-)t},
\]
and
\[
\E_i(Z_t) \geq C_1(-r,0) e^{ \theta( - r, 0) t}.
\]
In other words, 
\[
\E_i\left( e^{  \int_0^t p_{I_s} ds } Z_t \right)  \leq C e^{ - \gamma t} \E_i(Z_t),
\]
with $C = \frac{C_2(p_+ - r, p_-)}{C_1(-r,0)}$ and $\gamma = \theta(-r, 0) - \theta( p_+ - r, p_-)$. Thus, the expected result is proven if we show that $\gamma > 0$ for $r$ large enough. It is easily checked that 
\[
\lim_{ r \to \infty} \theta(-r, 0) - \theta( p_+ - r, p_-) = - p_- > 0,
\]
which concludes the proof.
\end{proof}

\begin{proof}[Proof of Proposition \ref{prop:rlarge(A)}]
Note that since $F_i'(x) = p_i  - 3 x^2 \leq p_i$ for all $(x,i) \in E$, we have for all $x,y$ and $t \geq 0$, 
\[
| \varphi_t^i(x) - \varphi_t^i(y)| \leq e^{  p_i t} |x - y|,
\]
where $(\varphi^i_t(x))_{t\geq 0, x\in M}$ is the flow of the vector field $F_i$ on $M$. This implies that for all $t \geq t_0$, on
the event $\{t_0 \geq T_0\}$, one has
\begin{equation}
\label{eq:control-after-T0}
d((X_t,I_t),(Y_t,J_t)) \leq e^{  \int_{t_0}^t p_{I_s} ds } d((X_{t_0},I_{t_0}), (Y_{t_0}, J_{t_0})).
\end{equation}
In particular, since when $i = j$, $T_0 = 0$ $\P_{(x,i),(y,j)}$-almost surely, one has using Lemma \ref{lem:r-large-conctrat} that 
\begin{align*}
\E_{(x,i),(y,i)} \left( d((X_t,I_t),(Y_t,J_t)) e^{ - \int_0^t \rho(I_s) ds} \right) & \leq \E_{i}\left( e^{  \int_0^t p_{I_s} ds} Z_t \right) d((x,i),(y,j))\\
& \leq C e^{ - \gamma t} \E_i( Z_t) d((x,i),(y,j)).
\end{align*}
Hence, (A) holds for points $(x,i)$ and $(y,j)$ such that $i=j$. For the case where $i \neq j$, we proceed as in the proof of Proposition \ref{prop:PDMP}. First, let us prove that for some constant $c > 0$, for all $t \geq 0$ and $i \neq j$,
\begin{equation}
\label{eq:initialisation-rec}
\E_{i,j} \left( \mathbbm{1}_{T_0 \leq 1} Z_t^I \right) \geq c \E_i ( Z_t).
\end{equation}
Since the killing acts only on the irreducible Markov chain $I$ living in the finite state space $\mathcal{E}$, (H) is satisfied
(cf.\ e.g.~\cite{ChampagnatVillemonais2016b}): for some $c_H > 0$, for all $t \geq 0$,
\[
\sup_{i \in \mathcal{E}} \E_i Z_t\leq C_H \inf_{j\in\mathcal{E}}\E_j Z_t
\]
Hence, 
\begin{align*}
\E_{i,j} \left( \mathbbm{1}_{T_0 \leq 1} Z_t^I \right)  & = \E_{i,j} \left( \mathbbm{1}_{T_0 \leq 1} Z_1^I \E_{I_1}( Z_{t-1}) \right)\\
& \geq \frac{e^{ - \| \rho \|_{\infty} }}{C_H}  \min_{k,k' \in \mathcal{E}} \P_{k,k'}(T_0 \leq 1)\E_i(Z_{t-1})\\
& \geq \frac{e^{ - osc( \rho)}}{C_H} \min_{k,k' \in \mathcal{E}} \P_{k,k'}(T_0 \leq 1) \E_i(Z_t),
\end{align*}
which proves~\eqref{eq:initialisation-rec} with $c = e^{ - osc( \rho)}  \min_{k,k' \in \mathcal{E}} \P_{k,k'}(T_0 \leq 1) /C_H> 0$.
Reasoning by induction as for the proof of \eqref{eq:recurrence-a.s.PDMP}, this yields for all $k \geq 1$ and all $t \geq k$,
\begin{equation*}
% \label{eq:recurrence-not-contract}
\mathbb E_{(x,i),(y,j)}(\mathbbm{1}_{T_0> k}Z_t^X)\leq (1-c)^k\,\mathbb E_{(x,i)} Z_t.   
\end{equation*}
In particular
\begin{equation}
  \label{eq:recurrence-not-contract}
  \mathbb E_{(x,i),(y,j)}(\mathbbm{1}_{T_0>t/2}Z_t^X)\leq (1-c)^{t/2 -1}\,\mathbb E_{(x,i)} Z_t.   
\end{equation}
Now, by~\eqref{eq:control-after-T0}, we obtain using the Markov property that for all $t \geq 0$,
\begin{multline*}
  \E_{(x,i),(y,j)}\left( Z_t^I \mathbbm{1}_{T_0 \leq t/2} d((X_t, I_t),(Y_t,J_t)) \right) \\
  \begin{aligned}
    & \leq \E_{(x,i),(y,j)}\left( Z_t^I \mathbbm{1}_{T_0 \leq t/2} e^{  \int_{t/2}^t p_{I_s} ds} d\left((X_{t/2},I_{t/2}),(Y_{t/2},J_{t/2})\right)\right)\\
    & \leq \E_{(x,i),(y,j)}\left( Z_{t/2}^I \mathbbm{1}_{T_0 \leq t/2} \E_{I_{t/2}}\left(e^{  \int_0^{t/2} p_{I_s} ds} Z_{t/2}\right) \right).
  \end{aligned}
\end{multline*}
By Lemma \ref{lem:r-large-conctrat} and Markov's property,
% \[
% \E_{I_k}\left(e^{  \int_0^{t-k} p_{I_s} ds} Z_{t-k}\right) \leq C e^{ - \gamma(t - k)} \E_{I_k}(Z_{t-k}),
% \]
% and thus, using again the Markov property,
\[
  \E_{(x,i),(y,j)}\left( Z_t^I \mathbbm{1}_{T_0 \leq t/2} d((X_t, I_t),(Y_t,J_t)) \right) \leq  C e^{ - \gamma t/2} \E_i Z_t.
\]
% that is 
% \[
% \E_{(x,i),(y,j)}\left(G_t^I \mathbbm{1}_{T_0 \leq k} d((X_t, I_t),(Y_t,J_t)) \right) \leq  C e^{ - \gamma(t - k)}.
% \]
% The end of the proof is now exactly the same as the end of the proof of (A) in Proposition \ref{prop:PDMP}.
Combining this with~\eqref{eq:recurrence-not-contract} completes the proof of (A).
\end{proof}

\subsection{A counter-example when $\rho$ is not Lipschitz}

We now give a counter-example of Theorem~\ref{thm:main-compact} when $p$ is Lipschitz, but  $\rho = - \log(p)$ is not.  We consider the Bernoulli convolution
\[
X_{n+1} = \frac{1}{2} X_n + \theta_{n+1}
\]
with $(\theta_n)_{n\geq 1}$ an i.i.d.\ sequence of Rademacher random variables, on the state space $E = ( - 2, 2) \setminus \{0\}$ with $p(x) = \frac{|x|}{2}$, {i.e. $\rho(x) = - \log(\frac{|x|}{2})$, implying that $\rho$ is not Lipschitz on $E$}.
The next result shows that, even though $X_n$ is contracting almost surely in the sense of \eqref{eq:ascontraction}, the
Wasserstein distance between conditional distributions starting from two different points is bounded below by a positive constant.

\begin{prop}
	For all $x,y \in E$,  for all $n \geq 1$
	\[
	\cW_d \left( \frac{\delta_x P_n}{\delta_x P_n \mathbbm{1}}, \, \frac{\delta_y P_n}{\delta_y P_n \mathbbm{1}} \right) \geq \frac{1}{2} | x - y |.
	\]
\end{prop}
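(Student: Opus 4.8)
The plan is to show that the normalized conditional distribution $\mu^x_n:=\delta_x P_n/(\delta_x P_n\mathbbm 1)$ has first moment exactly $x/2$ for \emph{every} $n\geq 1$, and then to conclude via Kantorovich--Rubinstein duality applied to the $1$-Lipschitz test function $\phi(w)=w$. First I would record that $E=(-2,2)\setminus\{0\}$ is stable under the two maps $z\mapsto z/2\pm1$ (since $z/2+1\in(0,2)$ and $z/2-1\in(-2,0)$ for $z\in(-2,2)$), so the chain started in $E$ stays in $E$ and $\delta_x P_n\mathbbm1=\mathbb E_x[Z_n]>0$, making $\mu^x_n$ a well-defined probability measure supported in the bounded set $E$.

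Set $g_n(z)=\mathbb E_z[Z_n]=\delta_z P_n\mathbbm1$ and $h_n(z)=\mathbb E_z[Z_nX_n]=\delta_zP_n\mathrm{Id}$. Using $Z_n=p(X_0)\,(Z_{n-1}\circ\vartheta_1)$ (with $\vartheta_1$ the time shift) and the fact that $X_1=z/2+\theta_1$ under $\mathbb P_z$ with $\theta_1$ Rademacher, the Markov property at time $1$ gives, for all $n\geq1$ and $z\in E$,
\[
g_n(z)=p(z)\,\tfrac12\bigl(g_{n-1}(z/2+1)+g_{n-1}(z/2-1)\bigr),\qquad
h_n(z)=p(z)\,\tfrac12\bigl(h_{n-1}(z/2+1)+h_{n-1}(z/2-1)\bigr),
\]
with $g_0\equiv1$ and $h_0=\mathrm{Id}$. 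I would then prove by induction on $n\geq1$ that $g_n(z)=|z|/2^n$ and $h_n(z)=z|z|/2^{n+1}$ for all $z\in E$. The base case $n=1$ is immediate: $g_1(z)=p(z)=|z|/2$ and $h_1(z)=p(z)\,\mathbb E_z[X_1]=(|z|/2)(z/2)=z|z|/4$. For the inductive step the key elementary identities, both valid because $z/2+1>0>z/2-1$ on $(-2,2)$, are $|z/2+1|+|z/2-1|=2$ and $(z/2+1)|z/2+1|+(z/2-1)|z/2-1|=(z/2+1)^2-(z/2-1)^2=2z$; substituting $g_{n-1}(w)=|w|/2^{n-1}$ and $h_{n-1}(w)=w|w|/2^{n}$ into the recursions and using $p(z)=|z|/2$ yields exactly $g_n(z)=|z|/2^n$ and $h_n(z)=z|z|/2^{n+1}$.

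It follows that, for every $n\geq1$ and $x\in E$,
\[
\int_E w\,\mu^x_n(dw)=\frac{h_n(x)}{g_n(x)}=\frac{x}{2}.
\]
Since $d$ is the (bounded) distance $d(w,w')=|w-w'|$ on $E$, the function $\phi(w)=w$ lies in $\textnormal{Lip}_1(d)$, so the Kantorovich--Rubinstein duality recalled in Section~\ref{sec:results} gives
\[
\cW_d\!\left(\frac{\delta_x P_n}{\delta_x P_n\mathbbm1},\,\frac{\delta_y P_n}{\delta_y P_n\mathbbm1}\right)
\geq\left|\int_E w\,\mu^x_n(dw)-\int_E w\,\mu^y_n(dw)\right|=\left|\frac{x}{2}-\frac{y}{2}\right|=\frac12|x-y|,
\]
which is the claim. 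The only real content is the explicit computation of the moments: the closed forms for $g_n$ and $h_n$ are forced by the sign structure of the maps $z\mapsto z/2\pm1$ on $(-2,2)$, and the main point is that the resulting first moment of $\mu^x_n$ is $x/2$ \emph{independently of $n$}, so that $\phi=\mathrm{Id}$ witnesses the lower bound uniformly in $n$. I do not anticipate any serious obstacle beyond carrying out these two short inductions.
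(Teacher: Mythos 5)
Your proof is correct and rests on exactly the same two elementary identities the paper exploits, namely $|z/2+1|+|z/2-1|=2$ and $(z/2+1)|z/2+1|+(z/2-1)|z/2-1|=2z$ on $(-2,2)$, leading to the same conclusion that the conditional first moment is $x/2$ independently of $n$, and finishing with Kantorovich--Rubinstein applied to $\phi=\mathrm{Id}$. The only difference is cosmetic: you package the computation as a one-step backward recursion giving closed forms $g_n(z)=|z|/2^n$ and $h_n(z)=z|z|/2^{n+1}$, whereas the paper telescopes the expectations $\E_x(|X_1|\cdots|X_{n-1}|)$ and $\E_x(X_n|X_1|\cdots|X_{n-1}|)$ directly.
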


% \subsubsection{Counter-example when $p$ is not bounded below: death valley}
% In this section,

\begin{proof}
	For all $x \in E$, we have 
	\[
	\E_x ( |X_1| ) = \frac{1}{2} \left( \left| \frac{x}{2} + 1 \right| + \left| \frac{x}{2} - 1 \right| \right) =1,
	\]
	and
	\[
	\E_x ( X_1 |X_1|) = \frac{1}{2} \left( \left( \frac{x}{2} + 1 \right)^2 - \left( \frac{x}{2} - 1 \right)^2 \right) = x.
	\]
	Therefore, for all $n \geq 2$,
	\begin{align*}
		\E_x ( |X_1| \cdots |X_{n-1}|) & = \E_x \left( |X_1| \cdots |X_{n-2} | \E_{X_{n-2}}({|X_1|}) \right)\\
		& = \E_x \left( |X_1| \cdots |X_{n-2} | \right) = \ldots = 1,
	\end{align*}
	and thus
		\[
		\E_x( Z_n^X) = \E_x (p(x) \cdots p(X_{n_1})) = \E_x( \frac{|x|}{2} \frac{|X_1|}{2} \cdots \frac{|X_{n-1}|}{2}) =  2^{-n}|x|.
		\]
	On the other hand,
	\begin{align*}
		\E_x ( X_n |X_1| \cdots |X_{n-1}|) &  = \frac{1}{2} \E_x ( X_{n-1} |X_1| \cdots |X_{n-1}|) + \E_x ( \theta_n |X_1| \cdots |X_{n-1}|)\\
		& = \frac{1}{2} \E_x \left( |X_1| \cdots |X_{n-2}| \E_{X_{n-2}} ( X_{n-1} | X_{n-1}|) \right) + 0\\
		& = \frac{1}{2} \E_x \left( |X_1| \cdots | X_{n-2} |X_{n-2} \right) = \ldots = \frac{x}{2},
	\end{align*}
	where we have used independence of $\theta_n$ and $X_1, \ldots, X_{n-1}$ and the fact that $\E ( \theta_n) = 0$. Hence,
		\[
		\E_x( X_n Z_n^X) = 2^{-n} \E_x ( X_n |x| |X_1| \cdots |X_{n-1}|) = 2^{-n} |x| \frac{x}{2},
		\]
		and thus
		\[
		\E_x \left( X_n G_n^X\right) = \frac{x}{2},
		\]
		leading to
		\[
		\left|  \E_x \left( X_n G_n^X\right) -  \E_y \left( Y_n G_n^Y\right) \right| = \frac{|x-y|}{2}.
		\]
	This concludes the proof by Kantorovitch-Rubinstein formula.
\end{proof}

\section*{Acknowledgements}

We thank two anonymous referees for their useful comments.

The work of N.C. is partially funded by the Chair “Modélisation Mathématique et Biodiversité” of VEOLIA-Ecole Polytechnique-MNHN-F.X
and by the European Union (ERC, SINGER, 101054787). Views and opinions expressed are however those of the author(s) only and do not
necessarily reflect those of the European Union or the European Research Council. Neither the European Union nor the granting
authority can be held responsible for them.

\section*{Statements and Declarations}

The authors have no competing interests to declare that are relevant to the content of this article.

\end{document}